\documentclass[12pt]{article}
\usepackage{amsmath, amsthm, amsfonts}
\usepackage[margin=1 in]{geometry}
\usepackage{amssymb}
\usepackage{natbib}
\usepackage{hyperref}
\usepackage{float}
\usepackage{graphicx}
\usepackage{cleveref}
\usepackage{authblk}
\usepackage{xr,bm}
\usepackage{mathrsfs}
\usepackage{enumitem}
\usepackage[labelfont=bf]{caption}

\usepackage{xcolor}
\newcommand{\be} {\begin{eqnarray*}}
\newcommand{\ee} {\end{eqnarray*}}

\def\d{{ \mbox{d} }}
\theoremstyle{definition}
\newtheorem{definition}{Definition}[section]

\newcommand{\1}{\\[1ex]}

\newtheorem{theorem}{Theorem}[section]
\newtheorem*{theorem*}{Theorem}
\newtheorem{lemma}[theorem]{Lemma}
\newtheorem{ass}[theorem]{Assumption}

\newtheorem{proposition}[theorem]{Proposition}
\newtheorem{corollary}[theorem]{Corollary}
\newtheorem{rem}{Remark}[section]

\def\*#1{\bm{#1}}

\title{Sieve Estimation of Optimal Transport Maps from\\ Paired Data in Gaussian Spaces}

\author{
Xin Jin$^{1}$,
Kit Chan$^{2}$,
and Riddhi Pratim Ghosh$^{2}$\\
{\small $^{1}$Department of Mathematics, The University of Tampa}\\
{\small $^{2}$Department of Mathematics and Statistics, Bowling Green State University}
}
\date{}

\begin{document}
\maketitle
\begin{abstract}
We study the estimation of infinite-dimensional optimal transport maps from noisy paired observations. The population map pushes a Gaussian reference measure forward to a target probability measure on a function space and takes the Cameron--Martin gradient form $T=I+\nabla_{\mathcal H}\phi$. Our estimator uses cylindrical gradient sieves based on finitely many Cameron--Martin coordinates, thereby reducing the problem to finite-dimensional empirical risk minimization. A local nonasymptotic oracle inequality separates cylindrical approximation and statistical estimation errors while accounting for the conditioning of the parametrization. The approximation analysis relies on regularity conditions governing coordinate decay and dependence on omitted input coordinates. For diagonal Gaussian and nonlinear block-interaction classes, we derive matching upper and lower bounds that establish the minimax rate $N^{-s/(2s+1)}$ in expected norm, where $s$ measures weighted coordinate regularity. We further analyze a continuous two-groups model with Gaussian--Laplace mixtures and derive a prediction-risk bound for the resulting transport-map estimator.
\end{abstract}
	
\vspace{1cm}

\textit{Keywords}: Optimal transport; paired observations; Gaussian measures on Hilbert space; Cameron–Martin space; cylindrical sieve estimation; nonasymptotic rates.

\newpage

\section{Introduction}
\label{sec:intro}

In uncertainty quantification, probability distributions on function spaces are used to describe unknown curves, fields, and other functional quantities. Gaussian measures provide a natural reference for these distributions. In Bayesian inverse problems, for example, a Gaussian prior on an unknown function is updated by observations to obtain a posterior distribution on the same function space \citep{stuart2010inverse}. A transport map from the reference measure to the target distribution transforms reference samples into target samples \citep{marzouk2016sampling}. Related generative methods have also been developed for probability measures on Hilbert spaces \citep{kerrigan2023diffusion}. These problems motivate the estimation of transport maps in infinite-dimensional Gaussian spaces.

In this article, we study the transport map from noisy paired observations. For independent source draws $f_0^{(i)}$ from a Gaussian reference measure, we observe
\[
f_1^{(i)}
=
T\bigl(f_0^{(i)}\bigr)+\xi^{(i)},
\qquad i=1,\ldots,N,
\]
where the observation noise has conditional mean zero. The pairing gives the input--output correspondence, while the map must be estimated from finitely many noisy observations.

In the standard statistical optimal-transport setting, independent samples from the source and target distributions are observed without pairing. If the map is known, estimating a finite-dimensional representation is an approximation problem. With noiseless paired observations, the map is observed directly at the sampled inputs. We consider the case of noisy paired observations, where both finite sample size and observation noise contribute to the estimation error.

Since the observations are paired, $\mathbb E\!\left[f_1^{(i)}\mid f_0^{(i)}\right]
=T\bigl(f_0^{(i)}\bigr)$ under the conditional mean-zero assumption. At the population level, $T$ pushes the source measure $\pi_0$ onto a target measure $\pi_1$. For Cameron--Martin cost, the infinite-dimensional analogue of Brenier's construction gives
\[
T=I+\nabla_{\mathcal H}\phi,
\]
where $\mathcal H$ is the Cameron--Martin space
\citep{brenier1991polar,FeyelUstunel2004,Gonzalez2023}. The gradient representation determines the map classes and their finite-dimensional approximations used in our analysis.

We approximate the potential $\phi$ by cylindrical functions of finitely many Cameron--Martin coordinates and estimate the resulting gradient map by empirical risk minimization. The number of retained coordinates increases with the sample size. The estimation error therefore depends on both cylindrical approximation and statistical estimation. This approximation--estimation tradeoff is a main feature of the infinite-dimensional problem.

A further difficulty comes from the dependence on discarded coordinates. Decay of the high-frequency output coordinates of $T-I$ alone does not guarantee accurate cylindrical approximation, because a retained output coordinate may still depend on omitted input coordinates. We therefore separate output regularity from regularity with respect to discarded inputs. For general Sobolev potentials, weighted derivative regularity and a conditional Gaussian Poincar\'e inequality control this dependence. For potentials of bounded chaos degree, the degree restriction provides another way to control the approximation error. The observation noise creates an additional difficulty: it may belong to the ambient Hilbert space while having infinite Cameron--Martin norm. We use a criterion based on finitely many observed coordinates so that the regression loss remains finite.

Our main objective is to give conditions under which finite-dimensional gradient estimators recover the infinite-dimensional map and to determine the resulting convergence rates. For a diagonal Gaussian class and a nonlinear block-interaction class, we establish matching upper and lower bounds of order $N^{-s/(2s+1)}$ in expected norm, where $s$ measures weighted coordinate regularity. The nonlinear class contains non-Gaussian targets and interactions that cannot be removed by a fixed orthogonal change of coordinates. For bounded-chaos potentials with interaction order $q$, an orthogonal Hermite sieve has dimension of order $d^q$ and gives the rate $N^{-s/(q+2s)}$. The additive case is minimax. We also establish a local nonasymptotic oracle inequality that separates approximation error, estimation error, and the effect of parametrization conditioning. These results describe how coordinate regularity and interaction structure affect the statistical cost of estimating the map.

We study three model classes. The first is a conjugate Gaussian inverse problem for which the transport coefficients are available in closed form. This allows direct verification of the regularity conditions, approximation error, and minimax rate. The second is a nonlinear block-interaction class for which we obtain matching upper and lower bounds. The third is a continuous two-groups model in which each coordinate follows either a Gaussian distribution or a heavier-tailed Laplace distribution with an unknown mixing probability. Such mixtures are used in empirical-Bayes multiple testing and sparse modeling \citep{efron2008microarrays,georgemcculloch1993}. In the paired observation model, the mixing probabilities determine the coordinate transport maps. We obtain a direct prediction-risk bound using a Bernstein condition for the excess loss.

Statistical estimation of optimal transport maps has been studied mainly in finite-dimensional spaces. Under smoothness conditions on the transport potential, \citet{hutterrigollet2021minimax} establish minimax rates for map estimation. Plug-in, sieve, and related estimators have been studied by \citet{manole2024plugin,ding2024statistical,cazelles2026statistical}. \citet{divolnilesweedpooladian2022} obtain rates based on the metric entropy of the class of Brenier potentials. Entropic estimators with finite-sample guarantees are studied by \citet{pooladiannilesweed2021}. Stability results provide another way to control map error through perturbations of the underlying measures \citep{deb2021rates}. For semi-dual estimation, \citet{lidingxueli2025stability} obtain an oracle inequality that separates statistical error, sieve bias, and approximation error.

Transport estimation on infinite-dimensional spaces has received more recent attention. \citet{ponnopratimaizumi2025} establish minimax rates under coordinatewise smoothness conditions when the source and target measures are supported on an infinite-dimensional space. Conditional triangular transport has been developed on separable function spaces, including transport from Gaussian priors to posterior distributions \citep{hosseinihsutaghvaei2025}. Other work studies transport in functional data analysis \citep{zhu2024functional} and transport methods for Gaussian processes and covariance operators \citep{masarotto2018procrustes,panaretoszemel2020invitation,minh2022finite}. In our setting, the data consist of noisy paired evaluations of a single population transport map.

Our problem is also related to regression models involving transport maps. In distribution-on-distribution regression, the observations are pairs of probability measures and transport maps are used to describe conditional Fr\'echet means \citep{ghodratipanaretos2022,ghodratipanaretos2023}. Related models use the tangent geometry of Wasserstein space \citep{chenlinmueller2023}, while supervised transport methods consider labeled pairs of measures \citep{bunnekrausecuturi2022}. Here, the observations are individual functions and their noisy images, and estimation is based on a squared-error criterion in retained Cameron--Martin coordinates. The gradient representation determines the approximation classes, while the parametrization affects their expressiveness and conditioning \citep{baptistamarzoukzahm2024}.

The rest of this article is organized as follows. Section~\ref{sec:setup} introduces the Gaussian reference, the observation model, and the estimator. Sections~\ref{sec:approx-stoch}--\ref{sec:rates-minimax} develop the approximation analysis, the local oracle inequality, and the minimax lower bound. Section~\ref{sec:input} gives regularity conditions for cylindrical approximation and studies structured Hermite sieves. Section~\ref{sec:examples} presents the three model classes. Numerical results are reported in Section~\ref{sec:numerical-illustration}, and Section~\ref{sec:discussion} gives some final remarks. Proofs and additional calculations are collected in the Supplementary Material.


\section{Setup and cylindrical-sieve estimators}
\label{sec:setup}
 
This section defines the Gaussian reference, the paired observation model, and the cylindrical sieve estimator. Section~\ref{subsec:prelim} introduces the Gaussian measure, its Cameron--Martin space, and the associated gradient. Section~\ref{subsec:rep} states the assumptions, the transport representation, and the paired sampling model. Section~\ref{subsec:sieve} defines the finite-dimensional sieve and the empirical risk minimizer. Sections~\ref{sec:approx-stoch} and~\ref{sec:rates-minimax} study its approximation and estimation errors.

\subsection{Gaussian reference and Cameron--Martin coordinates}
\label{subsec:prelim}

\textbf{Notation.} The Gaussian-space objects $\mathcal F$, $\gamma$, $C_\gamma$, $\mathcal H$, $\{e_k\}$, $\{\hat e_k\}$, and $\nabla_{\mathcal H}$ are defined below when first used. We write $\pi_0$ and $\pi_1$ for the source and target measures and measure map error in $L^2(\pi_0;\mathcal H)$, with
\[
\|S\|_{L^2(\pi_0;\mathcal H)}^2:=\int_{\mathcal F}|S(f)|_{\mathcal H}^2\,d\pi_0(f).
\]
The smoothness class $\mathcal W^s$ is defined in Definition~\ref{def:reg-class}. We use $a_N\asymp b_N$ for two sequences bounded above and below by constant multiples of one another, and $O_p(\cdot)$ and $o_p(\cdot)$ for the usual stochastic orders.

Let $(\mathcal{F},\langle\cdot,\cdot\rangle_{\mathcal{F}})$ be a real separable Hilbert space. Let $\mathcal{B}(\mathcal{F})$ be the Borel $\sigma$-algebra generated by the open subsets of the Hilbert space $\mathcal{F}$, and let $\gamma$ be a centered, nondegenerate Gaussian measure defined on $\mathcal{B}(\mathcal{F})$. Its covariance operator $C_\gamma:\mathcal{F}\to\mathcal{F}$,
\[
\langle C_\gamma x,\,y\rangle_{\mathcal F}
=\int_{\mathcal F}\langle x,f\rangle_{\mathcal F}\,
\langle y,f\rangle_{\mathcal F}\,\d\gamma(f),
\qquad x,y\in\mathcal F,
\]
is self-adjoint, positive, and trace class; nondegeneracy means $C_\gamma$ is injective, so $\gamma$ assigns positive mass to every nonempty open set \citep{bogachev1998gaussian}.

The Cameron--Martin space associated with $\gamma$ is
\[
\mathcal{H}:=C_\gamma^{1/2}(\mathcal{F}),
\qquad
\langle h,k\rangle_{\mathcal{H}}
:=\big\langle C_\gamma^{-1/2}h,\,C_\gamma^{-1/2}k\big\rangle_{\mathcal{F}},
\]
a Hilbert space that embeds continuously and densely into $\mathcal{F}$; since $C_\gamma$ is a trace class operator, the embedding $\mathcal{H}\hookrightarrow\mathcal{F}$ is compact. The space $\mathcal{H}$ is the set of admissible shift directions. The translation of every $\gamma$-null set by $h$ is a $\gamma$-null set if and only if $h\in\mathcal{H}$. That is, $\gamma(\cdot-h)\ll\gamma(\cdot)$ if and only if $h\in\mathcal{H}$, by the Cameron--Martin theorem \citep{bogachev1998gaussian}.

For $h\in\mathcal{H}$, let $\hat h\in L^2(\gamma)$ denote the associated
first-chaos, or Paley--Wiener functional. That is, the map
$h\mapsto\hat h$ is a linear isometry from $\mathcal{H}$ into
$L^2(\gamma)$:
\[
    \mathbb{E}_\gamma[\hat h\,\hat k]
    = \langle h,k\rangle_{\mathcal{H}},
    \qquad h,k\in\mathcal{H}.
\]
Thus, under $\gamma$,
\[
    \hat h\sim N\!\left(0,|h|_{\mathcal{H}}^2\right).
\]
In particular, if $\{e_k\}_{k\ge1}$ is an orthonormal basis of
$\mathcal{H}$, then $\{\hat e_k\}_{k\ge1}$ are i.i.d.\ $N(0,1)$
under $\gamma$.

Being self-adjoint, positive, and trace class, $C_\gamma$ has an orthonormal eigenbasis
$\{\varphi_k\}_{k\ge1}$ of $\mathcal F$ with $C_\gamma\varphi_k=\lambda_k\varphi_k$,
$\lambda_1\ge\lambda_2\ge\cdots>0$ and $\sum_k\lambda_k<\infty$. We fix once and for all
the orthonormal basis of $\mathcal H$ it induces,
\[
  e_k:=C_\gamma^{1/2}\varphi_k=\sqrt{\lambda_k}\,\varphi_k,\qquad k\ge1,
\]
whose first-chaos functionals are
$\hat e_k=\langle\,\cdot\,,\varphi_k\rangle_{\mathcal F}/\sqrt{\lambda_k}$, i.i.d.\ $N(0,1)$
under $\gamma$. The index $k$ orders the Cameron--Martin directions by decreasing
eigenvalue $\lambda_k$. This is the basis used throughout, for the sieve features of
Definition~\ref{def:cylindrical}, the regularity class of Definition~\ref{def:reg-class}, and
the spectral-decay analysis of Section~\ref{subsec:spectral}.

For a smooth cylindrical function
\[
  \phi(f)=\psi\!\big(\hat h_1(f),\dots,\hat h_m(f)\big),
  \qquad \psi\in C_b^1(\mathbb R^m),\ h_1,\dots,h_m\in \mathcal{H},
\]
the Cameron--Martin, or Malliavin, gradient $\nabla_{\mathcal H}\phi(f)\in \mathcal{H}$ is the unique element representing the directional derivatives along $\mathcal{H}$,
\[
  \langle\nabla_{\mathcal{H}}\phi(f),h\rangle_{\mathcal{H}}=\lim_{t\to0}\frac{\phi(f+th)-\phi(f)}{t},\qquad h\in \mathcal{H}.
\]
Since $\nabla_{\mathcal H}\hat h=h$ for every $h\in \mathcal{H}$, the chain rule gives the explicit form
\begin{equation}
  \nabla_{\mathcal H}\phi(f)=\sum_{j=1}^m
  \partial_j\psi\!\big(\hat h_1(f),\dots,\hat h_m(f)\big)\,h_j .
  \label{eq:chain}
\end{equation}
The arguments in \eqref{eq:chain} are the first-chaos functionals $\hat h_j$, not the ambient inner products $\langle f,h_j\rangle_{\mathcal F}$, because $\nabla_{\mathcal H}\langle\,\cdot\,,h\rangle_{\mathcal F}=C_\gamma h\neq h$ in general. The symbol $\nabla_{\mathcal H}$ denotes this gradient throughout.

\subsection{Population transport map and paired observation model}
\label{subsec:rep}
 
Let $\mathcal{P}_2(\mathcal{F})$ be the Borel probability measures on $\mathcal F$ with finite second moment, and set
\[
\mathcal{P}_{2,\gamma}(\mathcal{F})
:=\{\mu\in\mathcal{P}_2(\mathcal{F}):\mu\ll\gamma\}.
\]
We transport between two elements $\pi_0,\pi_1\in\mathcal{P}_{2,\gamma}(\mathcal{F})$ with strictly positive densities,
\[
\pi_0=f_0\,\gamma,\qquad \pi_1=f_1\,\gamma.
\]
 
\begin{description}
\item[\textnormal{(A1)} Finite second moments.]\;
$\displaystyle\int_{\mathcal F}\|f\|_{\mathcal F}^2\,\d\pi_j(f)<\infty$ for $j=0,1$; equivalently $\pi_0,\pi_1\in\mathcal P_2(\mathcal F)$.

\item[\textnormal{(A2)} Finite Cameron--Martin transport cost.]\;
\[
  W_{2,\mathcal H}(\pi_0,\pi_1)^2
  := \inf_{S:\,S_\#\pi_0=\pi_1}\int_{\mathcal F} |S(f)-f|_{\mathcal H}^2\,\d\pi_0(f) < \infty .
\]

\item[\textnormal{(A3)} Absolute continuity.]\;
The densities satisfy $f_0,f_1>0$ $\gamma$-a.e.
\end{description}

\begin{definition}
\label{def:Hconvex}
A measurable $\phi:\mathcal F\to\mathbb R\cup\{+\infty\}$ is $\mathcal H$-convex,
or equivalently $1$-convex, if for $\gamma$-a.e.\ $f\in\mathcal F$ the map
\[
  \mathcal H\ni h\longmapsto \phi(f+h)+\tfrac12|h|_{\mathcal H}^2
\]
is convex. Convexity is imposed on Cameron--Martin slices; the population map is
written as the identity plus its Cameron--Martin displacement.
\end{definition}

The population map is fixed by the following structural hypothesis, which we impose
alongside \textnormal{(A1)}--\textnormal{(A3)}.

\begin{ass}
\label{ass:rep}
The Monge problem
\[
  \inf_{S:\,S_\#\pi_0=\pi_1}\int_{\mathcal F}|S(f)-f|_{\mathcal H}^2\,\d\pi_0(f)
\]
admits a minimizer $T$, unique $\pi_0$-almost surely, and there exists an
$\mathcal H$-convex potential $\phi$, as in Definition~\ref{def:Hconvex},
$\mathcal H$-differentiable $\pi_0$-a.e., such that
\[
  T(f)=f+\nabla_{\mathcal H}\phi(f)\quad\text{for }\pi_0\text{-a.e.\ }f\in\mathcal F,
  \qquad \nabla_{\mathcal H}\phi\in L^2(\pi_0;\mathcal H).
\]
\end{ass}

\begin{rem}
\label{rem:rep-scope}
For $\pi_0=\gamma$, the gradient representation follows from Gaussian-space optimal
transport theory \citep{FeyelUstunel2004}. We retain it as a structural assumption for
general sources. All three explicit classes below have Gaussian source and verify their
population maps directly.
\end{rem}

Since $T-I=\nabla_{\mathcal H}\phi$ is $\mathcal H$-valued, the Cameron--Martin cost equals
\[
  \int_{\mathcal F} |T(f)-f|_{\mathcal H}^2\,d\pi_0(f)=\|\nabla_{\mathcal H}\phi\|_{L^2(\pi_0;{\mathcal H})}^2,
\]
so the cost is finite if and only if $\nabla_{\mathcal H}\phi\in L^2(\pi_0;{\mathcal H})$. Uniqueness is meant $\pi_0$-a.s., since potentials differing on a $\pi_0$-null set induce the same map. We call $T$ the population transport map in what follows.

We observe paired data generated by the population map. Each source draw is observed together with a noisy image under $T$, which gives the squared-error criterion used below. The following assumption specifies the sampling and noise model.

\begin{ass}
\label{ass:data}
The data are i.i.d.\ pairs $\{(f_0^{(i)},f_1^{(i)})\}_{i=1}^N$ with
\[
  f_0^{(i)}\stackrel{\textnormal{i.i.d.}}{\sim}\pi_0,\qquad
  f_1^{(i)}=T(f_0^{(i)})+\xi^{(i)},\qquad i=1,\dots,N,
\]
where $T$ is the population map of Assumption~\ref{ass:rep}. The noise $\xi^{(i)}$ is a
 $\mathcal F$-valued Gaussian random element, $\xi^{(i)}\sim N(0,\sigma^2C_\gamma)$
for the same trace-class covariance operator $C_\gamma$ as the reference $\gamma$. The noise intensity is $\sigma^2>0$, and $\xi^{(i)}\in\mathcal F$ almost surely. The noise is independent across $i$ and of the design $\{f_0^{(i)}\}$, and its
Cameron--Martin coordinates are
\begin{equation}
\label{eq:noise-coords}
  \xi^{(i)}_k:=\frac{\big\langle\xi^{(i)},\varphi_k\big\rangle_{\mathcal F}}{\sqrt{\lambda_k}}
  \ \stackrel{\textnormal{i.i.d.}}{\sim}\ N(0,\sigma^2),\qquad k\ge1,
\end{equation}
exactly as for a draw from $\gamma$ itself rescaled by $\sigma$. These are ordinary real
random variables, defined without reference to the Cameron--Martin norm of $\xi^{(i)}$,
which is almost surely infinite; $f_1^{(i)}$ is an element of $\mathcal F$ and not of
$T(f_0^{(i)})+\mathcal H$.
\end{ass}
Here $\pi_1$ denotes the law of the latent noise-free image $T(f_0)$. When $\sigma>0$, the marginal distribution of the observed response $f_1=T(f_0)+\xi$ generally differs from $\pi_1$.
 
This is the Gaussian sequence, or white-noise, model in the Cameron--Martin
coordinates, the canonical setting in which the rate of Section~\ref{sec:minimax} is
sharp. Since $\xi\notin\mathcal H$ almost surely, the quantities
$\|T_{\theta,N}(f_0)-f_1\|_{\mathcal H}^2$ are almost surely $+\infty$, so the criterion
cannot be defined as a difference of squared prediction errors. We define it instead by a
finite expression in the observed coordinates.

Fix a reference $\theta_0\in\Theta_N$, write
$Z_i:=\Phi_N(f_0^{(i)})\in\mathbb R^{d_N}$ for the feature vector of
Definition~\ref{def:cylindrical}, $b_\theta(z):=\nabla g_\theta(z)\in\mathbb R^{d_N}$ for
the sieve displacement in coordinates, and
\begin{equation}
\label{eq:Dik}
  D_{ik}:=\frac{\big\langle f_1^{(i)}-f_0^{(i)},\varphi_k\big\rangle_{\mathcal F}}{\sqrt{\lambda_k}},
  \qquad k=1,\dots,d_N,
\end{equation}
for the observed displacement in the same coordinates, a finite random vector because
$f_0^{(i)},f_1^{(i)}\in\mathcal F$ and $\lambda_k>0$. The estimator minimizes
\begin{equation}
\label{eq:def-Rhat-circ}
  \widehat R_N^\circ(\theta)
  :=\frac1N\sum_{i=1}^N\sum_{k=1}^{d_N}
  \Big\{b_{\theta,k}(Z_i)^2-b_{\theta_0,k}(Z_i)^2
  -2D_{ik}\big[b_{\theta,k}(Z_i)-b_{\theta_0,k}(Z_i)\big]\Big\},
\end{equation}
a real-valued, data-measurable function of $\theta\in\Theta_N$, defined without any
appeal to an infinite quantity and independent of the choice of $\theta_0$ beyond a
$\theta$-independent additive shift. The empirical risk minimizer is
$\hat\theta_N\in\arg\min_{\theta\in\Theta_N}\widehat R_N^\circ(\theta)$, with
$\hat T_N:=T_{\hat\theta_N,N}$. All empirical gradients and Hessians below refer to
\eqref{eq:def-Rhat-circ}.

At the population level, define the excess risk
\begin{equation}
\label{eq:def-RN}
  R_N(\theta):=\|T_{\theta,N}-T\|_{L^2(\pi_0;\mathcal H)}^2 ,
\end{equation}
so that $T$ is the population minimizer over all maps and the sieve oracle is
$\theta_N^\ast\in\arg\min_{\theta\in\Theta_N}R_N(\theta)$.
Section~\ref{sup:foundations} shows that
$\mathbb E\big[\widehat R_N^\circ(\theta)\big]=R_N(\theta)-R_N(\theta_0)$ for every
$\theta,\theta_0\in\Theta_N$, so \eqref{eq:def-Rhat-circ} is an unbiased estimate of the
excess-risk difference and $\theta_N^*$ is its population minimizer. Performance is
measured by the prediction error $\|\hat T_N-T\|_{L^2(\pi_0;\mathcal H)}$.

When only unpaired ensembles $\{f_0^{(i)}\}_{i=1}^N$ and $\{f_1^{(j)}\}_{j=1}^N$ are observed, the squared-error risk above is not computable, and one instead solves a semi-dual, or Kantorovich, problem or uses plug-in and barycentric-projection estimators. These extensions are deferred to Section~\ref{sec:discussion}.


Computable classes are built by approximating the potential $\phi$ with a function of finitely many Cameron--Martin coordinates and taking its $\mathcal H$-gradient as the candidate map. The resulting cylindrical-sieve estimators are finite-dimensional and retain the Cameron--Martin gradient structure $T_{\theta,N}=I+\nabla_{\mathcal H}\phi_{\theta,N}$ of the population map. As Remark~\ref{rem:convexity} makes precise, this structural form is preserved without imposing $\mathcal H$-convexity on $g_\theta$.
 
\subsection{Cylindrical sieve and empirical risk minimization}
\label{subsec:sieve}

The sieve is built in the Cameron--Martin coordinates. A candidate potential depends on the first $d_N$ first-chaos coordinates, so the eigen-ordering of $C_\gamma$ determines which coordinates are retained. The family $\{g_\theta\}$ determines the form and dimension of the finite approximation. We use the method of sieves, with the sieve dimension increasing with the sample size \citep{shenwong1994}.

Let $\{e_k\}_{k\ge1}$ be the orthonormal basis of the Cameron--Martin space
$\mathcal H$ fixed in Section~\ref{subsec:prelim}, and let $d_N\to\infty$ be a sieve
dimension.

\begin{definition}
\label{def:cylindrical}
The feature map of order $d_N$ is the
vector of first-chaos coordinates
\[
  \Phi_N(f):=\big(\hat e_1(f),\dots,\hat e_{d_N}(f)\big)\in\mathbb R^{d_N},
\]
which under $\gamma$ has i.i.d.\ $N(0,1)$ entries. Let
$\{g_\theta:\theta\in\Theta_N\subset\mathbb R^{p_N}\}$ be a parametric class of
continuously differentiable functions $g_\theta:\mathbb R^{d_N}\to\mathbb R$ such that
$g_\theta$ and $\nabla g_\theta$ have at most polynomial growth, which ensures
$\phi_{\theta,N}\in\mathbb D^{1,2}(\gamma)$, and such that
$\nabla g_\theta(\Phi_N)\in L^2(\pi_0)$ for every $\theta\in\Theta_N$. The second
requirement is separate from the first when the source is not the reference. Under a general $\pi_0\ll\gamma$, \textnormal{(A1)} assumes only second moments.
Square integrability against $\pi_0$ is stated separately. It holds when
$\pi_0=\gamma$, as in the three explicit classes, and whenever $\nabla g_\theta$ is bounded. The associated cylindrical potential and
cylindrical-sieve map are
\[
  \phi_{\theta,N}(f):=g_\theta(\Phi_N(f)),\qquad
  T_{\theta,N}(f):=f+\nabla_{\mathcal H}\phi_{\theta,N}(f)
  =f+\sum_{k=1}^{d_N}\partial_k g_\theta(\Phi_N(f))\,e_k,
\]
where the last identity is exact by \eqref{eq:chain} with $h_k=e_k$.
\end{definition}

Two indices govern the construction, the sieve dimension $d_N=\dim\Phi_N$, which is the
number of Cameron--Martin coordinates the map may act on, and the parameter dimension
$p_N=\dim\theta$, which is the complexity of $g_\theta$. The first controls the
approximation error and the second the stochastic error, as made precise in
Sections~\ref{sec:approx-section}--\ref{sec:rates-section}.

By construction $T_{\theta,N}-I=\nabla_{\mathcal H}\phi_{\theta,N}
=\sum_{k=1}^{d_N}\partial_k g_\theta(\Phi_N)\,e_k$ is $\mathcal H$-valued, so each sieve
map is an admissible Cameron--Martin perturbation of the identity, of the same form
$I+\nabla_{\mathcal H}\phi$ as the population map in Assumption~\ref{ass:rep}. Since
$\{e_k\}$ is $\mathcal H$-orthonormal,
\[
  \|T_{\theta,N}-I\|_{L^2(\pi_0;\mathcal H)}^2
  =\mathbb E_{\pi_0}\!\left\|\nabla g_\theta(\Phi_N)\right\|_2^2,
\]
which is finite whenever $\nabla g_\theta(\Phi_N)\in L^2(\pi_0)$.

Varying $\theta$ over $\Theta_N$ produces, for each $N$, the candidate class
\[
\mathcal T_N:=\{\,T_{\theta,N}:\theta\in\Theta_N\,\},
\]
a sieve of Cameron--Martin gradient maps, every element of which is the $\mathcal H$-gradient of a cylindrical potential, of the same structural form $I+\nabla_{\mathcal H}\phi$ as the true map. Different choices of $\{g_\theta\}$ give different map classes. Quadratic potentials give linear diagonal maps, monotone coordinate potentials give rearrangement maps, and deep-network potentials give flexible nonlinear maps. These choices are used in Sections~\ref{subsec:gaussian} and~\ref{subsec:bayesian-example}.
 
\begin{rem}
\label{rem:convexity}
Assumption~\ref{ass:rep} makes the population map $T$ the $\mathcal H$-gradient of an $\mathcal H$-convex potential. The sieve keeps the gradient form $I+\nabla_{\mathcal H}\phi_{\theta,N}$ but does not impose $\mathcal H$-convexity on $g_\theta$. Thus a fitted sieve map is a gradient map but may not itself be an optimal transport map. Its population risk is $R_N(\theta)=\|T_{\theta,N}-T\|_{L^2(\pi_0;\mathcal H)}^2$, so the sieve oracle is the $L^2(\pi_0;\mathcal H)$ projection of $T$ onto the candidate class. Imposing $\mathcal H$-convexity would instead give a constrained approximation and optimization problem.
\end{rem}
 
The best the class can do at the population level is the sieve oracle
\[
\theta_N^*\in\arg\min_{\theta\in\Theta_N}R_N(\theta),
\qquad
T_N^*:=T_{\theta_N^*,N},
\qquad
R_N(\theta)=\|T_{\theta,N}-T\|_{L^2(\pi_0;\mathcal H)}^2,
\]
with $R_N$ the parametric population risk of Section~\ref{subsec:rep}. Its distance to the truth, $\|T_N^*-T\|_{L^2(\pi_0;\mathcal H)}$, is the irreducible sieve approximation error; quantifying it in terms of $d_N$ and the regularity of $\phi$ is the subject of Section~\ref{sec:approx-section}.

Given the paired sample $\{(f_0^{(i)},f_1^{(i)})\}_{i=1}^N$ of
Section~\ref{subsec:rep}, the estimator is the minimizer of the finite contrast
\eqref{eq:def-Rhat-circ} over the sieve,
$\widehat\theta_N\in\arg\min_{\theta\in\Theta_N}\widehat R_N^\circ(\theta)$, with
$\widehat T_N:=T_{\widehat\theta_N,N}$. Written out, coordinate $k$ contributes
$b_{\theta,k}(Z_i)^2-2D_{ik}b_{\theta,k}(Z_i)$ up to a $\theta$-free term, so
\eqref{eq:def-Rhat-circ} is an ordinary least-squares criterion for the sieve
displacement $b_\theta$ against the observed displacement $D$ in the first $d_N$
Cameron--Martin coordinates.
 
For empirical measures with equally many particles, the pushforward constraint $S_\#\hat\pi_0=\hat\pi_1$ reduces to an index permutation, $S(f_0^{(i)})=f_1^{(\sigma(i))}$ for some $\sigma\in S_N$. The squared-error formulation above relaxes this hard assignment to a regression against the paired targets $f_1^{(i)}=T(f_0^{(i)})$.
 
Performance is measured by the prediction error $\|\widehat T_N-T\|_{L^2(\pi_0;\mathcal H)}$, which
the triangle inequality splits through the sieve oracle $T_N^*$ defined above into
\begin{equation}
\label{eq:s3-decomp}
\|\widehat T_N-T\|_{L^2(\pi_0;\mathcal H)}
\le
\|\widehat T_N-T_N^*\|_{L^2(\pi_0;\mathcal H)}
+
\|T_N^*-T\|_{L^2(\pi_0;\mathcal H)}.
\end{equation}
The decomposition itself is deterministic, holding for every realization of the sample.
The approximation term is controlled by the coordinate regularity of $T$, while control
of the stochastic term requires the local regularity and localization conditions
introduced below.

The terms in \eqref{eq:s3-decomp} are governed by the complexity parameters of the sieve. The parameter dimension $p_N$ and the local condition number
\[
\kappa_N:=L_N/\mu_N
\]
control the stochastic term, where $L_N$ is the local Lipschitz constant of $\theta\mapsto T_{\theta,N}$ and $\mu_N$ the local strong-convexity parameter of $R_N$; the sieve dimension $d_N$ controls the approximation term through the rate $d_N^{-s}$. The two terms are analyzed separately in the sequel, the approximation term in Section~\ref{sec:approx-section}, the stochastic term and the resulting oracle inequality in Section~\ref{sec:stoch-section}, and the implied nonasymptotic rates and oracle-balanced sieve dimension in Section~\ref{sec:rates-section}.


\section{Approximation and stochastic analysis}
\label{sec:approx-stoch}

The estimation error has approximation and stochastic components. Section~\ref{sec:approx-section} introduces a coordinate regularity class and gives the approximation rate $d_N^{-s}$ under an expressivity condition. Section~\ref{sec:stoch-section} bounds the stochastic term and gives the oracle inequality.

\subsection{Approximation properties of the cylindrical sieve}
\label{sec:approx-section}
 
The deterministic term $\|T_N^*-T\|_{L^2(\pi_0;\mathcal H)}$ of the decomposition
\eqref{eq:s3-decomp} is controlled by two effects. The first one belongs to the target, namely the decay of the
displacement of $T$ along the Cameron--Martin coordinates. It fixes the cost of
truncating at $d_N$. The second one belongs to the estimator, namely the richness of
$\{g_\theta\}$. It fixes how closely the sieve represents what is kept.
 
\subsubsection{Regularity class for the transport map}
\label{subsec:regularity}

Recall $T=I+\nabla_{\mathcal H}\phi$ with $\nabla_{\mathcal H}\phi\in L^2(\pi_0;\mathcal H)$.
Expand the displacement in the canonical basis $\{e_k\}$ of
Section~\ref{subsec:prelim}. These are the $C_\gamma$-eigen directions, ordered by decreasing eigenvalue $\lambda_1\ge\lambda_2\ge\cdots$:
\[
  \nabla_{\mathcal H}\phi(f)=\sum_{k\ge1}u_k(f)\,e_k,
  \qquad u_k(f):=\langle\nabla_{\mathcal H}\phi(f),e_k\rangle_{\mathcal H}\in L^2(\pi_0).
\]
Write $T^{(d)}:=I+\sum_{k\le d}u_k\,e_k$ for the $d$-coordinate truncation. It acts on the $d$ leading Cameron--Martin directions and as the identity on the rest. Then
\[
  \|T-T^{(d)}\|_{L^2(\pi_0;\mathcal H)}^2=\sum_{k>d}\|u_k\|_{L^2(\pi_0)}^2 .
\]
The truncation acts on the output directions. Each retained coefficient $u_k$ is still a function of the full input $f$, so $T^{(d)}$ is generally not cylindrical and may fail to be a gradient map. The gap between $T^{(d)}$ and the sieve depends on both the richness of $\{g_\theta\}$ and the dependence of $u_1,\dots,u_d$ on coordinates beyond $d$. Assumption~\ref{ass:expressivity} controls this gap. Section~\ref{sec:input} gives verifiable conditions for the Gaussian-source case.

\begin{definition}
\label{def:reg-class}
For $s>0$, the transport map $T=I+\nabla_{\mathcal H}\phi$ belongs to $\mathcal W^s$ if
\[
  \|T\|_{\mathcal W^s}^2:=\sum_{k\ge1}k^{2s}\,\|u_k\|_{L^2(\pi_0)}^2<\infty .
\]
\end{definition}

Membership in $\mathcal W^s$ gives the truncation bound Bounding
$k^{-2s}\le d^{-2s}$ on the tail $k>d$ gives, for every $d\ge1$,
\begin{equation}
\label{eq:truncation}
\|T-T^{(d)}\|_{L^2(\pi_0;\mathcal H)}\le\|T\|_{\mathcal W^s}\,d^{-s},
\qquad T\in\mathcal W^s .
\end{equation}

Because the index $k$ tracks the eigen-ordering of $C_\gamma$, membership in $\mathcal W^s$ asks
the coefficient sequence $(\|u_k\|_{L^2(\pi_0)})_k$ to be square-summable against the
weights $k^{2s}$, i.e.\ the action of $T$ to decay along the small-eigenvalue,
high-frequency, Cameron--Martin directions. The exponent $s$ is governed jointly by the
smoothness of $\phi$ and the spectral decay of $C_\gamma$. When $\lambda_k\asymp k^{-\beta}$,
the index weight $k^{2s}$ is equivalent to the spectral weight $\lambda_k^{-2s/\beta}$, so
$\mathcal W^s$ is a $C_\gamma$-weighted Sobolev scale. This is made explicit in
Section~\ref{subsec:spectral} and in the Gaussian benchmark of
Section~\ref{subsec:gaussian}, where $s$ is read off in closed form from the tail
spectrum.

\subsubsection{Sieve approximation error}
\label{subsec:approx-error}
 
The sieve must approximate the potential associated with $T^{(d_N)}$. We state this requirement as an expressivity condition.
 
\begin{ass}
\label{ass:expressivity}
There is a constant $C'>0$ such that, for every $N$,
\[
\inf_{\theta\in\Theta_N}\big\|T_{\theta,N}-T^{(d_N)}\big\|_{L^2(\pi_0;\mathcal H)}\le C'\,d_N^{-s}.
\]
\end{ass}
 
\begin{theorem}
\label{thm:approx}
If $T\in\mathcal W^s$, per Definition~\ref{def:reg-class}, and
Assumption~\ref{ass:expressivity} holds, then the sieve oracle satisfies
\[
\|T_N^*-T\|_{L^2(\pi_0;\mathcal H)}\le C_a\,d_N^{-s},
\qquad
C_a:=C'+\|T\|_{\mathcal W^s}.
\]
\end{theorem}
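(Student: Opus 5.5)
The argument is a triangle inequality through the truncated map $T^{(d_N)}$, combined with the optimality of the sieve oracle. Since $T_N^*$ minimizes $R_N(\theta)=\|T_{\theta,N}-T\|_{L^2(\pi_0;\mathcal H)}^2$ over $\Theta_N$, we have for every $\theta\in\Theta_N$
\[
\|T_N^*-T\|_{L^2(\pi_0;\mathcal H)}\le\|T_{\theta,N}-T\|_{L^2(\pi_0;\mathcal H)}
\le\|T_{\theta,N}-T^{(d_N)}\|_{L^2(\pi_0;\mathcal H)}+\|T^{(d_N)}-T\|_{L^2(\pi_0;\mathcal H)} .
\]
Both maps in each difference are of the form identity plus an $\mathcal H$-valued displacement in $L^2(\pi_0;\mathcal H)$. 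The differences therefore lie in $L^2(\pi_0;\mathcal H)$, and the triangle inequality in that normed space applies.

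The second term is controlled by the truncation bound \eqref{eq:truncation}. Because $T\in\mathcal W^s$, it gives $\|T-T^{(d_N)}\|_{L^2(\pi_0;\mathcal H)}\le\|T\|_{\mathcal W^s}d_N^{-s}$, using only $k^{-2s}\le d_N^{-2s}$ on the tail $k>d_N$.

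The first term is where Assumption~\ref{ass:expressivity} enters. It gives $\inf_{\theta}\|T_{\theta,N}-T^{(d_N)}\|\le C'd_N^{-s}$. Taking the infimum over $\theta$ in the displayed inequality, with the second term independent of $\theta$, yields
\[
\|T_N^*-T\|_{L^2(\pi_0;\mathcal H)}\le (C'+\|T\|_{\mathcal W^s})\,d_N^{-s}.
\]

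The only delicate point is existence of the minimizer $\theta_N^*$. If the $\arg\min$ is empty, I would read $T_N^*$ as any near-minimizer with risk within $\varepsilon$ of the infimum and let $\varepsilon\downarrow0$, or note that the bound holds for the infimum itself. Apart from this, the steps are routine.
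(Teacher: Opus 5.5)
Your proposal is correct and matches the paper's proof essentially verbatim. Both use optimality of the sieve oracle and the triangle inequality through $T^{(d_N)}$, then bound the infimum of the first term by Assumption~\ref{ass:expressivity} and the truncation term by \eqref{eq:truncation}. Your remark on existence of $\theta_N^*$ is a harmless addition: the paper simply assumes the $\arg\min$ is nonempty (compactness and continuity, as in Remark~\ref{rem:parameter-space}), and your near-minimizer argument covers the case where it is empty.
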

 
\begin{proof}
By the definition of the sieve oracle and the triangle inequality,
\[
\|T_N^*-T\|\le \inf_{\theta\in\Theta_N}\|T_{\theta,N}-T^{(d_N)}\|
 +\|T^{(d_N)}-T\|\le (C'+\|T\|_{\mathcal W^s})d_N^{-s}.
\]
All norms in the first inequality are in $L^2(\pi_0;\mathcal H)$.
\end{proof}

The expressivity assumption is additional to membership in $\mathcal W^s$;
Section~\ref{app:expressivity} gives a counterexample to uniform approximation based on
output regularity alone. It holds with zero approximation error for the retained
coordinates in the Gaussian and mixture classes, and for complete retained blocks in
Section~\ref{subsec:block}.


\subsection{Stochastic analysis of the estimator}
\label{sec:stoch-section}
 
We bound the stochastic term $\|\widehat T_N-T_N^*\|_{L^2(\pi_0;\mathcal H)}$ in \eqref{eq:s3-decomp} using local curvature, gradient concentration, and the Lipschitz behavior of the parametrization near $\theta_N^*$. Throughout, $\mathcal N_N$ denotes a neighborhood of the sieve oracle $\theta_N^*$.
 
\subsubsection{Local strong convexity and quadratic growth}
\label{subsec:lsc}
 
\begin{rem}
\label{rem:parameter-space}
For the local oracle theorem, $\Theta_N\subset\mathbb R^{p_N}$ is compact with nonempty
interior, the criteria are continuous, and the population oracle is interior.
Compactness and continuity give existence of minimizers. Uniform concentration and
localization are separate assumptions. The grid theorem below has its own parameter
set and does not use these interiority conventions.
\end{rem}
 
\begin{ass}
\label{ass:invertible}
The oracle $\theta_N^*$ is interior to $\Theta_N$, the population risk $R_N$ is twice
continuously differentiable in a neighborhood of $\theta_N^*$, and the Hessian
$H_{R_N}(\theta_N^*)$ is invertible; see, e.g., \citet[Theorem~5.23]{Vaart1998}.
\end{ass}
 
At an interior minimizer the second-order necessary condition gives
$H_{R_N}(\theta_N^*)\succeq0$, which invertibility upgrades to
$H_{R_N}(\theta_N^*)\succ0$. The smallest eigenvalue of a symmetric matrix is a
continuous function of its entries, so there are a closed Euclidean ball
$\mathcal N_N=\{\theta:\|\theta-\theta_N^*\|\le r_N\}\subset\mathrm{int}(\Theta_N)$
and a constant $\mu_N>0$ with
$v^\top H_{R_N}(\theta)v\ge\mu_N\|v\|^2$ for every $\theta\in\mathcal N_N$ and every
$v\in\mathbb R^{p_N}$. Since $\nabla R_N(\theta_N^*)=0$ at the interior optimum, a
second-order Taylor expansion then yields the quadratic growth
\begin{equation}
\label{eq:quad-growth}
R_N(\theta)-R_N(\theta_N^*)\ge\tfrac{\mu_N}{2}\,\|\theta-\theta_N^*\|^2,
\qquad \theta\in\mathcal N_N,
\end{equation}
so $\mu_N$ is the local strong-convexity parameter of $R_N$. We also record the
Lipschitz behavior of the parametrization, which converts parameter error into map
error.
 
\begin{ass}
\label{ass:lipschitz-map}
There exists $L_N>0$ such that
\[
\|T_{\theta,N}-T_{\theta',N}\|_{L^2(\pi_0;\mathcal H)}\le L_N\,\|\theta-\theta'\|,
\qquad \forall\,\theta,\theta'\in\mathcal N_N.
\]
\end{ass}
 
A uniform operator-norm bound on the derivative of the map parametrization implies this condition.
 
\subsubsection{Gradient concentration and localization}
\label{subsec:emp-proc}

\begin{ass}
\label{ass:grad-conc}
There are a constant $C_g<\infty$, not depending on $N$, $\delta$, $d_N$ or $p_N$, and a
sequence $\zeta_N\to0$, such that for every $N$ and every $\delta\in(0,1)$,
\[
  \mathbb P\left(
  \sup_{\theta\in\mathcal N_N}\big\|\nabla\widehat R_N^\circ(\theta)-\nabla R_N(\theta)\big\|
  >C_g\,(1+\sigma)L_N\sqrt{\frac{p_N}{N\delta}}
  \right)\ \le\ \delta+\zeta_N,
\]
with $L_N$ as in Assumption~\ref{ass:lipschitz-map}.
\end{ass}

The constant $C_g$ is uniform in $N$, $\delta$, $d_N$ and $p_N$, as the
statements below require; it is allowed to depend on the sieve family and on the constants
appearing in whichever primitive conditions are used to verify this assumption, and
Proposition~\ref{prop:verify} records that dependence explicitly. The factor
$1+\sigma$ reflects the separate design and noise contributions to the gradient
fluctuation, and keeps $C_g$ free of the noise level as $\sigma\downarrow0$; the Gaussian
and block calculations isolate a variance term proportional to $\sigma^2$. The
sequence $\zeta_N$ absorbs any high-probability design event used to verify the derivative
envelopes, and is zero when those envelopes hold $\pi_0$-a.s.

\begin{ass}
\label{ass:localization}
There is a sequence $\eta_N\to0$ such that, for every $N$,
\[
  \mathbb P\big(\widehat\theta_N\notin\mathcal N_N\big)\ \le\ \eta_N.
\]
\end{ass}
 
A sufficient condition for localization is a positive population margin
\[
m_N:=\inf_{\theta\in\Theta_N\setminus\mathcal N_N}
[R_N(\theta)-R_N(\theta_N^*)]>0
\]
and $\sup_{\Theta_N}|\widehat R_N^\circ-\mathbb E\widehat R_N^\circ|=o_p(m_N)$.
The ERM basic inequality then excludes the complement of $\mathcal N_N$ with
probability tending to one. This condition accounts for the shrinking neighborhood
and growing parameter dimension. The primitive derivative conditions in
Section~\ref{app:verify} address local curvature and concentration; localization remains
separate. The explicit classes below admit direct risk calculations.

\subsubsection{Condition number and an oracle inequality}
\label{subsec:oracle}
 
The assumptions above suffice to bound the stochastic term of \eqref{eq:s3-decomp}, and
with it the whole prediction error, in terms of the sieve's complexity and the local
geometry of $R_N$ at the oracle. A single quantity summarizes that geometry.
 
\begin{definition}
\label{def:kappa}
\[
\kappa_N:=\frac{L_N}{\mu_N},
\]
with $L_N$ the local Lipschitz constant of $\theta\mapsto T_{\theta,N}$, as in
Assumption~\ref{ass:lipschitz-map}, and $\mu_N$ the local strong-convexity
parameter of $R_N$, as in \eqref{eq:quad-growth}.
\end{definition}
Theorem~\ref{thm:error_bounds} below is an abstract local oracle result.
Assumptions~\ref{ass:invertible}--\ref{ass:grad-conc} control the risk and the empirical
fluctuation on $\mathcal N_N$, while Assumption~\ref{ass:localization} ensures that the
empirical minimizer enters that neighborhood with high probability. The theorem does not
require $\widehat\theta_N$ to be an interior stationary point of the empirical criterion,
since its proof uses only the ERM basic inequality
$\widehat R_N^\circ(\widehat\theta_N)\le\widehat R_N^\circ(\theta_N^*)$.
\begin{theorem}
\label{thm:error_bounds}
Under Assumptions~\ref{ass:invertible}--\ref{ass:localization}, with
$C:=2C_g$ for the constant $C_g$ of Assumption~\ref{ass:grad-conc}, for every
$\delta\in(0,1)$, with probability at least
$1-\delta-\eta_N-\zeta_N$ (for the $\zeta_N$ of Assumption~\ref{ass:grad-conc}; $\zeta_N=0$
whenever (B1) and (B3) are verified in their a.s.\ form),
\[
\|\widehat T_N-T\|_{L^2(\pi_0;\mathcal H)}
\le
C\,(1+\sigma)\kappa_NL_N\sqrt{\frac{p_N}{N\delta}}
+\|T_N^*-T\|_{L^2(\pi_0;\mathcal H)}.
\]
The proof is given in Section~\ref{sup:proofs-general}.
\end{theorem}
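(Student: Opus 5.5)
The plan is to work on the intersection of the two good events, $\{\widehat\theta_N\in\mathcal N_N\}$ from Assumption~\ref{ass:localization} and the event on which the uniform gradient deviation is at most $\varepsilon_N:=C_g(1+\sigma)L_N\sqrt{p_N/(N\delta)}$ from Assumption~\ref{ass:grad-conc}. A union bound gives this intersection probability at least $1-\delta-\eta_N-\zeta_N$. On it we first bound $\|\widehat\theta_N-\theta_N^*\|$. We then transfer that bound to map error through Assumption~\ref{ass:lipschitz-map}, and finally add the approximation term via the triangle inequality \eqref{eq:s3-decomp}.

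For the parameter bound, set $\Delta(\theta):=\widehat R_N^\circ(\theta)-\mathbb E\widehat R_N^\circ(\theta)$. By the unbiasedness identity of Section~\ref{sup:foundations}, $\mathbb E\widehat R_N^\circ(\theta)=R_N(\theta)-R_N(\theta_0)$, so $\nabla\Delta=\nabla\widehat R_N^\circ-\nabla R_N$ on $\mathcal N_N$. The ERM basic inequality $\widehat R_N^\circ(\widehat\theta_N)\le\widehat R_N^\circ(\theta_N^*)$ rearranges to
\[
R_N(\widehat\theta_N)-R_N(\theta_N^*)\le \Delta(\theta_N^*)-\Delta(\widehat\theta_N).
\]
Since $\mathcal N_N$ is a convex ball containing both points, the mean value theorem along the segment bounds the right side by $\sup_{\mathcal N_N}\|\nabla\Delta\|\,\|\widehat\theta_N-\theta_N^*\|\le\varepsilon_N\|\widehat\theta_N-\theta_N^*\|$. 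This uses differentiability of $\widehat R_N^\circ$ in $\theta$, which is implicit in Assumption~\ref{ass:grad-conc}. The quadratic growth \eqref{eq:quad-growth} bounds the left side below by $\tfrac{\mu_N}{2}\|\widehat\theta_N-\theta_N^*\|^2$. Dividing gives $\|\widehat\theta_N-\theta_N^*\|\le 2\varepsilon_N/\mu_N$, and the bound is trivial if the distance is zero.

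For the map bound, Assumption~\ref{ass:lipschitz-map} gives $\|\widehat T_N-T_N^*\|_{L^2(\pi_0;\mathcal H)}\le L_N\cdot 2\varepsilon_N/\mu_N=2C_g(1+\sigma)\kappa_NL_N\sqrt{p_N/(N\delta)}$, which is the stated constant $C=2C_g$. Inserting this into \eqref{eq:s3-decomp} finishes the proof.

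The main obstacle is the delicate step of getting the empirical-fluctuation difference into gradient form. This needs $\widehat\theta_N\in\mathcal N_N$, which is exactly what localization supplies, and convexity of $\mathcal N_N$ so that the segment stays inside the region where the gradient bound holds. No interior stationarity of $\widehat\theta_N$ is used.
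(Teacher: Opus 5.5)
Your proposal is correct and follows the paper's proof essentially step by step: the ERM basic inequality, the mean value theorem along the segment in the convex ball $\mathcal N_N$, quadratic growth giving $\|\widehat\theta_N-\theta_N^*\|\le 2\varepsilon_N/\mu_N$ on the intersection of the localization and gradient-concentration events, and then the Lipschitz bound plus the triangle inequality, which yields $C=2C_g$ and failure probability at most $\delta+\eta_N+\zeta_N$. Your centering by $\mathbb E\widehat R_N^\circ$ rather than by $R_N$ differs from the paper's $\Delta_N$ only by the constant $R_N(\theta_0)$, so its gradient is the same, and the argument is unchanged.
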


The bound separates the stochastic estimation error from the deterministic
approximation error. On the smoothness class of Section~\ref{sec:approx-section}, where
Theorem~\ref{thm:approx} gives $\|T_N^*-T\|_{L^2(\pi_0;\mathcal H)}\le C_ad_N^{-s}$, the
two terms are, schematically, an approximation error $d_N^{-s}$ and an estimation error
$(1+\sigma)\kappa_NL_N\sqrt{p_N/N}$ carrying the conditioning factor. The first is
deterministic, unaffected by the sampling. In the second, the effective
dimension $p_N$ is scaled by $\kappa_NL_N=L_N^2/\mu_N$, where one factor of $L_N$ is
carried by the gradient concentration of Assumption~\ref{ass:grad-conc}, the second
converts parameter error into map error via Assumption~\ref{ass:lipschitz-map}, and the
local condition number $\kappa_N=L_N/\mu_N$ measures the loss in between.
Localization enters only through the failure probability $\eta_N$, since
Assumptions~\ref{ass:invertible}--\ref{ass:grad-conc} are imposed on $\mathcal N_N$
alone and apply only when $\widehat\theta_N$ lands there.

The estimation bound depends on the parametrization through $\kappa_N=L_N/\mu_N$. Two parametrizations of the same candidate class have the same approximation error but may have different values of $\kappa_N$ and hence different bounds. Exact empirical minimization over a fixed candidate class is unchanged by a bijective reparametrization.
 
Increasing $d_N$ decreases the approximation term but generally increases $p_N$, $L_N$,
and $\kappa_N$, and with them the estimation term. The balance between the two fixes
both the attainable rate and the sieve dimension attaining it.


\section{Rates and minimax lower bound}
\label{sec:rates-minimax}

Combining the approximation bound of Section~\ref{sec:approx-section} with the oracle inequality of Section~\ref{sec:stoch-section} gives the rate of the cylindrical-sieve estimator. Section~\ref{sec:rates-section} gives the nonasymptotic rate and the balanced sieve dimension. Section~\ref{sec:minimax} gives a minimax lower bound of order $N^{-s/(2s+1)}$ over $\mathcal W^s(B)$. On the diagonal subclass $\mathcal W^s_{\mathrm{diag}}(B)$ with $\pi_0=\gamma$, the sieve attains the same rate in the well-conditioned case.

\subsection{Nonasymptotic rates and the oracle sieve dimension}
\label{sec:rates-section}
 
We first state the nonasymptotic bound. Its balance depends on how $L_N$, $\mu_N$, and $p_N$ grow with the sieve dimension.
 
\subsubsection{Nonasymptotic rates, consistency, and the parametric regime}
\label{subsec:nonasymptotic}
 
\begin{corollary}
\label{cor:nonasymptotic}
Suppose the assumptions of Theorem~\ref{thm:error_bounds} hold, $T\in\mathcal W^s$, and
Assumption~\ref{ass:expressivity} holds. Then, for the same constant $C$ and every
$\delta\in(0,1)$, with probability at least $1-\delta-\eta_N-\zeta_N$,
\[
\|\widehat T_N-T\|_{L^2(\pi_0;\mathcal H)}
\ \le\
C\,(1+\sigma)\kappa_NL_N\sqrt{\frac{p_N}{N\delta}}
\ +\
C_a\,d_N^{-s}
\]
for the constant $C_a$ of Theorem~\ref{thm:approx}. The bound follows by inserting the
approximation bound of Theorem~\ref{thm:approx} into Theorem~\ref{thm:error_bounds}.
\end{corollary}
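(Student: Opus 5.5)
The corollary follows by composing Theorem~\ref{thm:error_bounds} with Theorem~\ref{thm:approx}, and I would argue it in that order.

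First, I would fix $\delta\in(0,1)$ and work on the event $E_{N,\delta}$ supplied by Theorem~\ref{thm:error_bounds}. Under Assumptions~\ref{ass:invertible}--\ref{ass:localization}, this event has probability at least $1-\delta-\eta_N-\zeta_N$. On $E_{N,\delta}$,
\[
\|\widehat T_N-T\|_{L^2(\pi_0;\mathcal H)}
\le C(1+\sigma)\kappa_NL_N\sqrt{\frac{p_N}{N\delta}}+\|T_N^*-T\|_{L^2(\pi_0;\mathcal H)},
\]
with $C=2C_g$. I would stress that this event and this constant are exactly those of the theorem, so the probability guarantee carries over without change.

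Second, I would bound the approximation term deterministically. Since $T\in\mathcal W^s$ and Assumption~\ref{ass:expressivity} holds, Theorem~\ref{thm:approx} gives
\[
\|T_N^*-T\|_{L^2(\pi_0;\mathcal H)}\le C_a d_N^{-s},
\qquad C_a=C'+\|T\|_{\mathcal W^s}.
\]
This bound does not depend on the sample. It therefore holds on every realization, in particular on $E_{N,\delta}$, and substituting it into the first display yields the stated inequality.

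One point needs checking. The oracle $\theta_N^*$ in Theorem~\ref{thm:error_bounds} is the minimizer of $R_N$ over $\Theta_N$, which is the same object used in Theorem~\ref{thm:approx}. That second proof uses only $\|T_N^*-T\|\le\inf_{\theta\in\Theta_N}\|T_{\theta,N}-T\|$, which follows from the definition of $\theta_N^*$ as a minimizer of $R_N(\theta)=\|T_{\theta,N}-T\|^2$. The two theorems are therefore consistent.

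No real obstacle arises. The only care needed is bookkeeping: keeping the same constant $C$ and the same failure probability, and noting that the deterministic bound requires no additional probabilistic event.
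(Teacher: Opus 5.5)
Your proposal is correct and matches the paper's argument, which likewise inserts the deterministic bound $\|T_N^*-T\|_{L^2(\pi_0;\mathcal H)}\le C_a d_N^{-s}$ of Theorem~\ref{thm:approx} into the high-probability bound of Theorem~\ref{thm:error_bounds}. The event, the failure probability $\delta+\eta_N+\zeta_N$, and the constant $C=2C_g$ are all unchanged.
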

 
Fixing $\delta$ at any constant in
Corollary~\ref{cor:nonasymptotic} and using $\eta_N+\zeta_N\to0$ gives the asymptotic form
\begin{equation}
\label{eq:op-rate}
\|\widehat T_N-T\|_{L^2(\pi_0;\mathcal H)}
=O_p\!\Big((1+\sigma)\kappa_NL_N\sqrt{\tfrac{p_N}{N}}\Big)+O\big(d_N^{-s}\big).
\end{equation}
In particular, the estimator is consistent whenever
$\kappa_NL_N\sqrt{p_N/N}\to0$ and $d_N^{-s}\to0$. If in addition
$\kappa_NL_N=O(1)$, $p_N=O(1)$, and $d_N^{-s}=o(N^{-1/2})$, then
$\|\widehat T_N-T\|_{L^2(\pi_0;\mathcal H)}=O_p(N^{-1/2})$: the usual parametric
root-$N$ rate.
 
\subsubsection{Role of spectral decay and sieve complexity}
\label{subsec:spectral}
 
In concrete settings the local constants admit polynomial orders in the sieve
dimension. Suppose there are exponents $a_L,a_\mu\ge0$ and $q\ge0$ with
\[
L_N\asymp d_N^{a_L},
\qquad
\mu_N\asymp d_N^{-a_\mu},
\qquad
p_N\asymp d_N^{q},
\]
so that $\kappa_N\asymp d_N^{a_L+a_\mu}$, $\kappa_NL_N\asymp d_N^{2a_L+a_\mu}$, and the stochastic
term of Corollary~\ref{cor:nonasymptotic} is of order
$(1+\sigma)d_N^{2a_L+a_\mu+q/2}N^{-1/2}$. Balancing it against the bias $d_N^{-s}$ selects
\[
d_N^\star\asymp \big(N/(1+\sigma)^2\big)^{\,1/\big(2(s+2a_L+a_\mu)+q\big)},
\quad
\|\widehat T_N-T\|_{L^2(\pi_0;\mathcal H)}
=O_p\!\Big(\big((1+\sigma)^2/N\big)^{s/\big(2(s+2a_L+a_\mu)+q\big)}\Big).
\]
Assumption~\ref{ass:grad-conc} gives the factor $1+\sigma$ in this general bound. For fixed $\sigma>0$, it does not affect the exponent in $N$. The Gaussian and block examples below have explicit variance terms proportional to $\sigma^2$. The choice $d_N^\star$ balances the two terms for given smoothness and conditioning exponents.

The exponent $s$ is determined by the target coefficient sequence, while $a_L$, $a_\mu$, and $q$ depend on the sieve and its parametrization. Since the coordinates $\hat e_k$ are standardized, the eigenvalues $\lambda_k$ cancel from the estimation term. The factor $L_N$ enters both gradient concentration and the parameter-to-map conversion, which gives the exponent $2a_L+a_\mu$ in the stochastic term. Section~\ref{subsec:gaussian} gives these quantities explicitly for the conjugate Gaussian example.


\subsection{Minimax lower bound}
\label{sec:minimax}

We derive a lower bound of the same order as the upper rate in
Section~\ref{sec:rates-section}. The Fano argument is stated for $\sigma>0$ under the
noisy model of Assumption~\ref{ass:data},
\[
f_1^{(i)}=T(f_0^{(i)})+\xi^{(i)},
\qquad
\xi^{(i)}\ \text{centered Gaussian, independent of }f_0^{(i)},
\quad
\mathbb E\big[(\xi^{(i)}_k)^2\big]=\sigma^2 .
\]
When $\sigma=0$, the conditional observation laws are degenerate and the
Kullback--Leibler calculation used in the proof is not available. Write
$\mathcal W^s(B):=\{T=I+\nabla_{\mathcal H}\phi:\|T\|_{\mathcal W^s}\le B\}$, and let
$\mathbb E_T$ denote expectation under this model with map $T$ and $N$ paired draws.

The lower bound holds over $\mathcal W^s(B)$ for general $\pi_0$. The matching upper bound is proved on the aligned diagonal subclass defined below with $\pi_0=\gamma$, where the sieve has a uniform $d^{-s}$ approximation rate. Section~\ref{app:expressivity} shows that Assumption~\ref{ass:expressivity} for a single target does not imply a uniform approximation bound over all of $\mathcal W^s(B)$. A matching uniform upper bound over the full class is not established here.

\begin{definition}
\label{def:diag-class}
For $s>0$ and $B>0$, the diagonal subclass
\[
\mathcal W_{\mathrm{diag}}^s(B):=\Big\{T=I+\nabla_{\mathcal H}\phi:
\phi(f)=\tfrac12\sum_{k\ge1}a_k\big(\hat e_k(f)^2-1\big),\ \ a_k\ge\underline a\ \forall k,\ \ 
\sum_{k\ge1}k^{2s}a_k^2\le B^2\Big\}
\]
consists of the transport maps whose displacement is diagonal in the Cameron--Martin
eigenbasis, $u_k=a_k\hat e_k$. The centering $-1$ makes $\phi$ well defined in
$L^2(\gamma)$ whenever $\sum_ka_k^2<\infty$. The summands are independent, mean zero, and have summable variance, exactly as in Remark~\ref{rem:gaussian-rates}. Here $\underline a\in(-1,0]$ is a floor on the displacement coefficients, fixed once and
for all; it enters constants but no rate. The sign condition on $a_k$ makes
$\phi$ $\mathcal H$-convex. Writing
$h=\sum_kh_ke_k\in\mathcal H$ and using linearity of $\hat e_k$,
$\phi(f+h)+\tfrac12|h|_{\mathcal H}^2=\phi(f)+\sum_ka_k\hat e_k(f)h_k+\tfrac12\sum_k(a_k+1)h_k^2$,
a convex quadratic form in $h$ iff $a_k+1\ge0$ for every $k$. The floor is bounded away
from $-1$ because the endpoint $a_k=-1$ is inadmissible in the standing setup. It collapses
the $k$th coordinate, $T(f)$ having $(1+a_k)\hat e_k(f)=0$ there, so $T_\#\gamma$ is
degenerate along $e_k$ and the absolute continuity $\pi_1\ll\gamma$ required by (A3) of
Section~\ref{subsec:rep} fails. Under $\pi_0=\gamma$ the
coordinates satisfy $\hat e_k\sim N(0,1)$, so $\|u_k\|_{L^2(\pi_0)}=|a_k|$ and hence
$\mathcal W_{\mathrm{diag}}^s(B)\subset\mathcal W^s(B)$; for a general source the two
norms differ by the factors $\|\hat e_k\|_{L^2(\pi_0)}$ and the containment need not
hold with the same $B$.
\end{definition}

\subsubsection{Construction of the hypothesis family}
\label{subsec:hypothesis}
 
Fix a resolution $d\ge1$ and perturb a band of $d$ high coordinates. Let
$\{v_k\}_{k=d+1}^{2d}$ be coordinate fields with $\|v_k\|_{L^2(\pi_0)}=1$, each acting along the Cameron--Martin direction $e_k$. Normalized coordinate functionals provide one example. For
$\omega\in\{0,1\}^{d}$ set
\[
T_\omega:=I+\tau\sum_{k=d+1}^{2d}\omega_{k}\,v_k\,e_k,
\qquad \tau>0 .
\]
The coordinate fields of $T_\omega-I$ are $u_k^\omega=\tau\,\omega_k\,v_k$, so
$\|u_k^\omega\|_{L^2(\pi_0)}=\tau\,\omega_k$.

Concretely, take $v_k:=\hat e_k/\|\hat e_k\|_{L^2(\pi_0)}$, the normalized first-chaos
functional of $e_k$ under $\pi_0$; by (A1) and $\pi_0\ll\gamma$ with $f_0>0$ $\gamma$-a.e.,
$0<\|\hat e_k\|_{L^2(\pi_0)}<\infty$, so $v_k$ is well defined and $\|v_k\|_{L^2(\pi_0)}=1$.
Setting $c_k:=1/\|\hat e_k\|_{L^2(\pi_0)}$ and
\[
\phi_\omega(f):=\frac{\tau}{2}\sum_{k=d+1}^{2d}\omega_k\,c_k\,\hat e_k(f)^2,
\]
the chain rule \eqref{eq:chain} gives
$\nabla_{\mathcal H}\phi_\omega=\tau\sum_{k=d+1}^{2d}\omega_k c_k\hat e_k\,e_k
=\tau\sum_{k=d+1}^{2d}\omega_k v_k\,e_k$, so $T_\omega=I+\nabla_{\mathcal H}\phi_\omega$ is of Brenier form. The potential $\phi_\omega$ is a quadratic form in the $\hat e_k$'s with
nonnegative coefficients $\tau\omega_kc_k\ge0$, hence convex along $\mathcal H$; the map $h\mapsto\phi_\omega(f+h)+|h|_{\mathcal H}^2/2$ is therefore convex, so
$\phi_\omega$ is $\mathcal H$-convex in the sense of Definition~\ref{def:Hconvex}. Since $f_0>0$ $\gamma$-a.e., the standing assumption $\pi_0=f_0\gamma$ implies
$\pi_0\sim\gamma$. The map $T_\omega$ differs from the identity only through finitely many
strictly positive coordinate scalings $1+\tau\omega_kc_k>0$, so $T_\omega$ is a linear
isomorphism fixing all but finitely many coordinates and $T_{\omega\#}\gamma\sim\gamma$.
Therefore $T_{\omega\#}\pi_0\sim T_{\omega\#}\gamma\sim\gamma$, so the target measure
$T_{\omega\#}\pi_0$ has a strictly positive density with respect to $\gamma$, as required
by (A3). Each $T_\omega$ also has finite Cameron--Martin transport cost, since its
displacement is supported on $d$ coordinates with $\|u_k^\omega\|_{L^2(\pi_0)}=\tau\omega_k$,
so (A1) and (A2) hold as well and $T_\omega$ is admissible in the standing model. That
$T_\omega$ is the optimal map for the pair $(\pi_0,T_{\omega\#}\pi_0)$ follows
coordinatewise, since for any coupling the $k$th coordinate cost is at least
$W_2^2$ between the two $k$th marginals, and the monotone coordinatewise scaling
$T_\omega$ attains every one of these lower bounds simultaneously. Therefore, $T_\omega\in\mathcal W^s(B)$ whenever the norm bound of
part (i) below does not exceed $B$. The three properties the Fano argument needs are
collected next and proved in Section~\ref{app:proof-hypothesis}.

\begin{lemma}
\label{lem:hypothesis}
There is a subset $\Omega\subseteq\{0,1\}^d$ with $\log|\Omega|\ge c\,d$ such that
\begin{enumerate}
\item[(i)] $\|T_\omega\|_{\mathcal W^s}\le C_1\,\tau\,d^{\,s+1/2}$ for every $\omega\in\Omega$;
\item[(ii)] $\|T_\omega-T_{\omega'}\|_{L^2(\pi_0;\mathcal H)}^2\ge c_2\,\tau^2\,d$ for distinct $\omega,\omega'\in\Omega$;
\item[(iii)] $\mathrm{KL}\big(P_{T_\omega}^{\otimes N}\,\big\|\,P_{T_{\omega'}}^{\otimes N}\big)
\le \dfrac{N\,\tau^2 d}{2\sigma^2}$.
\end{enumerate}
\end{lemma}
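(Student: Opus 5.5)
The plan is to take $\Omega$ from the Varshamov--Gilbert lemma and then verify (i)--(iii) by direct coordinate computations, using only that the coordinate fields $v_k$ are $L^2(\pi_0)$-normalized and that the $e_k$ are $\mathcal H$-orthonormal. Concretely, the Varshamov--Gilbert bound gives $\Omega\subseteq\{0,1\}^d$ with $|\Omega|\ge 2^{d/8}$ and pairwise Hamming distance $\rho(\omega,\omega')\ge d/8$ for distinct elements. Hence $\log|\Omega|\ge (\log 2/8)\,d$. For small $d$ we may take $\Omega=\{0,\mathbf 1\}$ and adjust $c$.

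For (i), the coordinate fields of $T_\omega-I$ are $u_k^\omega=\tau\omega_kv_k$ for $d<k\le 2d$ and zero otherwise. Since $\|v_k\|_{L^2(\pi_0)}=1$, Definition~\ref{def:reg-class} gives
\[
\|T_\omega\|_{\mathcal W^s}^2=\tau^2\sum_{k=d+1}^{2d}\omega_k k^{2s}\le \tau^2 d\,(2d)^{2s},
\]
so (i) holds with $C_1=2^s$.

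For (ii), $\mathcal H$-orthonormality of $\{e_k\}$ gives
\[
\|T_\omega-T_{\omega'}\|^2_{L^2(\pi_0;\mathcal H)}=\tau^2\sum_k(\omega_k-\omega'_k)^2\|v_k\|^2_{L^2(\pi_0)}=\tau^2\rho(\omega,\omega')\ge\tau^2d/8,
\]
so $c_2=1/8$.

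For (iii), the main step is to make the KL computation rigorous despite the infinite-dimensional response. The pair law factorizes as $\pi_0(df_0)\,P(df_1\mid f_0)$, and $\pi_0$ is common to all hypotheses, so the chain rule for KL reduces the problem to $\mathbb E_{\pi_0}\mathrm{KL}(N(T_\omega f_0,\sigma^2C_\gamma)\,\|\,N(T_{\omega'}f_0,\sigma^2C_\gamma))$. The mean difference $T_\omega f_0-T_{\omega'}f_0=\tau\sum_k(\omega_k-\omega_k')v_k(f_0)e_k$ lies in $\mathcal H$, which is the Cameron--Martin space of $N(0,\sigma^2C_\gamma)$ with norm $\sigma^{-1}|\cdot|_{\mathcal H}$. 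The Cameron--Martin theorem then gives equivalence of the two conditional laws and KL equal to $|T_\omega f_0-T_{\omega'}f_0|_{\mathcal H}^2/(2\sigma^2)$. Equivalently, only finitely many independent coordinates $\xi_k\sim N(0,\sigma^2)$ have shifted means, and the KL of product Gaussians is additive. Integrating over $\pi_0$ and using (ii) gives the per-sample bound $\tau^2\rho/(2\sigma^2)\le\tau^2d/(2\sigma^2)$. Tensorizing over the $N$ i.i.d.\ pairs yields (iii).

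The only delicate point is the Cameron--Martin identification of the conditional KL. The shift must lie in $\mathcal H$ for the laws not to be mutually singular, and here it is a finite combination of the $e_k$.
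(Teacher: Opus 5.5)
Your proposal is correct and takes essentially the same route as the paper. You use Varshamov--Gilbert for $\Omega$, direct coordinate computations for (i) (with $C_1=2^s$) and (ii) (with $c_2=1/8$), and for (iii) the KL chain rule with a common $\pi_0$-marginal, a Gaussian shift formula, and tensorization over the $N$ pairs. The differences are cosmetic: you phrase the conditional KL through the Cameron--Martin theorem for $N(0,\sigma^2C_\gamma)$, whose Cameron--Martin norm is $\sigma^{-1}|\cdot|_{\mathcal H}$, where the paper splits the standardized coordinates into band and off-band parts, and you explicitly handle $d<8$ with $\Omega=\{0,\mathbf{1}\}$, which the paper leaves implicit.
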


The factor $\sigma^{-2}$ in (iii) is where the noiseless case fails, the bound diverging
as $\sigma\downarrow0$, so that the Fano step of Section~\ref{subsec:minimax-bound} has no
usable information bound in that limit.

\subsubsection{Statement of the lower bound}
\label{subsec:minimax-bound}
 
\begin{theorem}
\label{thm:minimax}
There are constants $c=c(s,B,\sigma)>0$ and $N_0=N_0(s,B,\sigma)$, neither depending on
$N$, such that, for all $N\ge N_0$,
\[
\inf_{\widehat T}\ \sup_{T\in\mathcal W^s(B)}\
\mathbb E_T\big\|\widehat T-T\big\|_{L^2(\pi_0;\mathcal H)}
\ \ge\ c\,\Big(\frac{\sigma^2}{N}\Big)^{s/(2s+1)},
\]
the infimum taken over all estimators measurable in the paired sample. The parameters $s,B,\sigma>0$ are fixed in this asymptotic statement. When $\pi_0=\gamma$, the same lower bound holds
with $\mathcal W^s(B)$ replaced by the diagonal subclass
$\mathcal W^s_{\mathrm{diag}}(B)$ of Definition~\ref{def:diag-class}.
\end{theorem}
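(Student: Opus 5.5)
The plan is the standard Fano reduction applied to the hypothesis family of Lemma~\ref{lem:hypothesis}, which we take as given. Fix a resolution $d$ and amplitude $\tau$, to be tuned with $N$, and let $\{T_\omega:\omega\in\Omega\}$ be the packing from the lemma. First we fix $\tau$ so that every hypothesis is admissible: by part~(i), choosing
\[
\tau:=\frac{B}{C_1}\,d^{-(s+1/2)}
\]
gives $T_\omega\in\mathcal W^s(B)$ for all $\omega\in\Omega$. With this $\tau$, part~(ii) gives pairwise separation
\[
\|T_\omega-T_{\omega'}\|_{L^2(\pi_0;\mathcal H)}\ge 2\rho:=\sqrt{c_2}\,\tau\sqrt d\asymp d^{-s}.
\]
Part~(iii) bounds the pairwise Kullback--Leibler divergence by
\[
\frac{N\tau^2d}{2\sigma^2}\asymp\frac{N}{\sigma^2}\,d^{-2s}.
\]

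Next we reduce estimation to testing. Given any estimator $\widehat T$, define $\widehat\omega$ as a minimizer of $\|\widehat T-T_\omega\|_{L^2(\pi_0;\mathcal H)}$ over $\Omega$. If $\widehat\omega\neq\omega$, the triangle inequality forces $\|\widehat T-T_\omega\|\ge\rho$. Hence
\[
\sup_{T\in\mathcal W^s(B)}\mathbb E_T\|\widehat T-T\|\ \ge\ \rho\,\max_{\omega}P_{T_\omega}(\widehat\omega\neq\omega).
\]
Fano's inequality bounds the error probability from below by
\[
1-\frac{\max_{\omega,\omega'}\mathrm{KL}+\log 2}{\log|\Omega|}.
\]
We use $\log|\Omega|\ge cd$ together with (iii). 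Taking $d\asymp (N/\sigma^2)^{1/(2s+1)}$, with a small enough constant, keeps the KL bound at most $(c/4)d$. The $\log 2$ term is below $(c/4)d$ once $d$ is large, and this is where $N\ge N_0$ enters. The error probability is then at least $1/2$. This yields the lower bound $\rho/2\asymp d^{-s}\asymp(\sigma^2/N)^{s/(2s+1)}$, with constants depending on $s,B,\sigma$ through $C_1,c_2,c$ and the rounding of $d$ to an integer.

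For the diagonal statement with $\pi_0=\gamma$, we check that the same hypotheses lie in $\mathcal W^s_{\mathrm{diag}}(B)$. Here $c_k=1$, so $\phi_\omega=\tfrac{\tau}{2}\sum_k\omega_k\hat e_k^2$. Up to an additive constant, which does not affect the map, this matches Definition~\ref{def:diag-class} with $a_k=\tau\omega_k\ge0\ge\underline a$. The diagonal weighted norm $\sum_k k^{2s}a_k^2$ coincides with $\|T_\omega\|^2_{\mathcal W^s}$, so the same choice of $\tau$ works, and the Fano argument goes through verbatim.

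The main obstacle is the KL bound (iii), if it must be re-derived rather than quoted. The observation $f_1$ lives in $\mathcal F$ with noise of infinite Cameron--Martin norm. We must therefore argue that the conditional laws $N(T_\omega(f_0),\sigma^2C_\gamma)$ and $N(T_{\omega'}(f_0),\sigma^2C_\gamma)$ are mutually absolutely continuous via Cameron--Martin, since the shift $T_\omega(f_0)-T_{\omega'}(f_0)$ lies in $\mathcal H$. Their KL divergence equals $|T_\omega(f_0)-T_{\omega'}(f_0)|_{\mathcal H}^2/(2\sigma^2)$. Integrating over the common design law $\pi_0$ and tensorizing over $N$ i.i.d.\ pairs then gives $N\|T_\omega-T_{\omega'}\|^2/(2\sigma^2)\le N\tau^2d/(2\sigma^2)$.
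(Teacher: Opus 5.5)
This is essentially the paper's proof. You use the same band hypercube from Lemma~\ref{lem:hypothesis}, Fano plus a testing-to-estimation reduction, and the same check that at $\pi_0=\gamma$ the hypotheses are diagonal potentials with $a_k=\tau\omega_k\ge0\ge\underline a$. Your Cameron--Martin computation of the conditional KL on the full Gaussian law is equivalent to the paper's reduction to the band coordinates.

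The one structural difference is the order in which $\tau$ and $d$ are tuned. The paper fixes $\tau_N^2=2\alpha c_0\sigma^2/N$ first. This makes the KL condition of Tsybakov's Theorem~2.5 (against the reference hypothesis $\underline0\in\Omega$) hold for every $d$, and the norm bound (i) then selects $d_N$. You instead saturate (i) and tune $d$ through the KL condition.

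That tuning step contains a slip: the constant in $d\asymp(N/\sigma^2)^{1/(2s+1)}$ must be \emph{large}, not small. With $\tau=(B/C_1)d^{-(s+1/2)}$, the KL bound is $N(B/C_1)^2d^{-2s}/(2\sigma^2)$. Its ratio to $\log|\Omega|\ge cd$ is therefore of order $(N/\sigma^2)d^{-(2s+1)}$, which decreases in $d$. The requirement $\mathrm{KL}\le(c/4)d$ reads $d^{2s+1}\ge 2(B/C_1)^2N/(c\sigma^2)$. A small constant violates it, and then Fano gives nothing.

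The fix is to take $d=\lceil K(N/\sigma^2)^{1/(2s+1)}\rceil$ with $K^{2s+1}\ge 2(B/C_1)^2/c$. Then $\rho\asymp\tau\sqrt d\asymp d^{-s}\asymp K^{-s}(\sigma^2/N)^{s/(2s+1)}$, so your conclusion and constants' dependence on $(s,B)$ are unchanged.
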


The proof in Section~\ref{app:proof-minimax} uses a Fano argument over a hypercube of perturbed diagonal maps. Proposition~\ref{prop:gaussian-verify} gives the matching expected-norm upper bound on $\mathcal W^s_{\mathrm{diag}}(B)$ with $\pi_0=\gamma$, where the diagonal quadratic sieve satisfies Assumption~\ref{ass:expressivity} uniformly. Hence the cylindrical-sieve estimator is minimax rate-optimal on this class. Section~\ref{subsec:block} gives the same rate $N^{-s/(2s+1)}$ on a nonlinear block class with a non-Gaussian pushforward. The two-groups class of Section~\ref{subsec:bayesian-example} has the same exponent up to a logarithmic factor; a matching lower bound for that class is not available.


\section{Input dependence and cylindrical approximation}
\label{sec:input}

Definition~\ref{def:reg-class} controls decay in the output index. The family in Section~\ref{app:expressivity} shows that this condition alone does not give uniform $d^{-s}$ cylindrical approximation when a retained output depends strongly on a discarded input coordinate. We therefore add an input regularity index and derive a uniform approximation bound. Throughout this section, $\pi_0=\gamma$, so the Gaussian Poincar\'e inequality is available.

\subsection{Two-index regularity class}
\label{subsec:two-index}

The classes in this section are defined as gradient-map classes. For a set $\mathcal G$ of maps $T=I+\nabla_{\mathcal H}\phi$, define
\begin{equation}
\label{eq:transport-subclass}
  \mathcal G^{\mathrm{OT}}:=\big\{T\in\mathcal G:\ \phi\ \text{is}\ \mathcal H\text{-convex
  in the sense of Definition~\ref{def:Hconvex}}\big\}
\end{equation}
for its transport subclass, on which Assumption~\ref{ass:rep} holds and $T$ is the optimal
map between $\gamma$ and $T_\#\gamma$. The upper bounds for $\mathcal G$ also hold on $\mathcal G^{\mathrm{OT}}$. Corollary~\ref{cor:additive-minimax} proves the lower bound on the transport subclass.

Let $\phi\in\mathbb D^{2,2}(\gamma)$, so that the second Cameron--Martin derivatives
$D_ju_k=D_jD_k\phi$ exist in $L^2(\gamma)$, where $u_k=D_k\phi$ as in
Section~\ref{subsec:regularity}.

\begin{definition}
\label{def:two-index}
For $s,t>0$ and $B_1,B_2>0$, let $\mathcal W^{s,t}(B_1,B_2)$ be the set of gradient maps
$T=I+\nabla_{\mathcal H}\phi$ with $\phi\in\mathbb D^{2,2}(\gamma)$ such that
\begin{equation}
\label{eq:two-index}
  \|T\|_{\mathrm{out},s}^2:=\sum_{k\ge1}k^{2s}\,\|u_k\|_{L^2(\gamma)}^2\ \le\ B_1^2,
  \qquad
  \|T\|_{\mathrm{in},t}^2:=\sum_{j\ge1}j^{2t}\sum_{k\ge1}\|D_ju_k\|_{L^2(\gamma)}^2\ \le\ B_2^2 .
\end{equation}
\end{definition}

The first quantity is the norm of Definition~\ref{def:reg-class} and measures how fast the
displacement decays across output directions. The second measures how strongly the
displacement, in all its output coordinates at once, responds to a change in the $j$th
input coordinate, with the same polynomial weight. Both are expectations of explicit
derivatives of $\phi$ and can be evaluated for a given model, as we do in
Remark~\ref{rem:input-counterexample} and Section~\ref{subsec:block}.

Write $\mathcal F_d:=\sigma(\hat e_1,\dots,\hat e_d)$ for the $\sigma$-field of the first
$d$ Cameron--Martin coordinates and
\begin{equation}
\label{eq:def-gd}
  g_d:=\mathbb E_\gamma\big[\phi\mid\mathcal F_d\big],
  \qquad
  T_d:=I+\nabla_{\mathcal H}g_d .
\end{equation}
Since $g_d$ is a function of $\hat e_1,\dots,\hat e_d$ alone, $T_d$ is a cylindrical
gradient map in the sense of Definition~\ref{def:cylindrical}, unlike the output
truncation $T^{(d)}$ of Section~\ref{subsec:regularity}.

\begin{theorem}
\label{thm:two-index-approx}
Let $T\in\mathcal W^{s,t}(B_1,B_2)$. Then, for every $d\ge1$,
\begin{equation}
\label{eq:two-index-approx}
  \big\|T-T_d\big\|_{L^2(\gamma;\mathcal H)}^2
  \ \le\ B_2^2\,d^{-2t}\ +\ B_1^2\,d^{-2s} .
\end{equation}
Consequently $\inf_{\text{cylindrical }g}\|T-(I+\nabla_{\mathcal H}g)\|_{L^2(\gamma;\mathcal H)}
\le (B_1^2d^{-2s}+B_2^2d^{-2t})^{1/2}$, uniformly over $\mathcal W^{s,t}(B_1,B_2)$.
\end{theorem}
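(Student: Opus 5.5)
We compute $\nabla_{\mathcal H}g_d$ explicitly and split the error by output index, treating retained outputs $k\le d$ with a conditional Poincaré inequality and discarded outputs $k>d$ with plain output decay. The first step is the commutation identity for conditional expectation and Cameron--Martin derivatives under the product Gaussian measure $\gamma\cong N(0,1)^{\otimes\mathbb N}$ in the coordinates $\hat e_k$:
\[
D_k g_d=\mathbb E_\gamma[D_k\phi\mid\mathcal F_d]=\mathbb E_\gamma[u_k\mid\mathcal F_d]\quad(k\le d),
\qquad D_kg_d=0\quad(k>d).
\]
This is standard (Nualart, Prop.~1.2.8 type). One proves it for smooth cylindrical $\phi$ by differentiating under the Gaussian integral in the discarded coordinates, which are independent of $\mathcal F_d$. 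It then extends to $\mathbb D^{1,2}$ by closability of $D$ and $L^2$-continuity of conditional expectation. Consequently
\[
\|T-T_d\|_{L^2(\gamma;\mathcal H)}^2=\sum_{k\le d}\big\|u_k-\mathbb E[u_k\mid\mathcal F_d]\big\|_{L^2(\gamma)}^2+\sum_{k>d}\|u_k\|_{L^2(\gamma)}^2 .
\]

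The second sum is at most $d^{-2s}\sum_{k>d}k^{2s}\|u_k\|^2\le B_1^2d^{-2s}$, exactly as in \eqref{eq:truncation}.

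For the first sum, apply the conditional Gaussian Poincaré inequality. Conditionally on $\mathcal F_d$, the variables $(\hat e_j)_{j>d}$ remain i.i.d.\ $N(0,1)$. So for $F\in\mathbb D^{1,2}$,
\[
\mathbb E\big[(F-\mathbb E[F\mid\mathcal F_d])^2\big]\le \sum_{j>d}\|D_jF\|_{L^2(\gamma)}^2 .
\]
To justify this, apply the infinite-dimensional Gaussian Poincaré inequality to the measure $N(0,1)^{\otimes\{j>d\}}$ for a.e.\ fixed value of the first $d$ coordinates, then integrate. This can be done rigorously via cylindrical approximation, or via the Wiener chaos expansion: $F-\mathbb E[F\mid\mathcal F_d]$ keeps only the Hermite terms involving some index $j>d$, and the derivative in such a $j$ has norm at least that of the term. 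Apply this with $F=u_k\in\mathbb D^{1,2}$, which holds because $\phi\in\mathbb D^{2,2}$. Summing over $k\le d$ and bounding $1\le j^{2t}d^{-2t}$ for $j>d$ gives
\[
\sum_{k\le d}\sum_{j>d}\|D_ju_k\|^2\le d^{-2t}\sum_{j>d}j^{2t}\sum_{k}\|D_ju_k\|^2\le B_2^2d^{-2t}.
\]
Adding the two sums yields \eqref{eq:two-index-approx}. The consequence follows because $g_d$ is cylindrical, with constants uniform over the class.

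The main obstacle is the rigorous justification of the commutation $D_k\mathbb E[\phi\mid\mathcal F_d]=\mathbb E[D_k\phi\mid\mathcal F_d]$ and of the conditional Poincaré inequality in infinite dimensions. I would handle both through the chaos/Hermite expansion in the basis $\{\hat e_k\}$. There, $\mathbb E[\cdot\mid\mathcal F_d]$ is the projection onto Hermite polynomials in the first $d$ variables, and $D_k$ lowers the $k$th Hermite degree. Both statements then reduce to coefficientwise inequalities $|\alpha_j|\ge1$ for multi-indices involving some $j>d$.
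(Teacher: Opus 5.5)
Your proposal is correct and follows essentially the same route as the paper. Both arguments use the commutation $D_kg_d=\mathbb E[u_k\mid\mathcal F_d]$ for $k\le d$ (the paper also proves it in the Hermite chaos basis), the exact orthogonal split into retained and discarded outputs, the output tail bound $B_1^2d^{-2s}$, and the conditional Gaussian Poincar\'e inequality summed over $k\le d$ with the weight $1\le j^{2t}d^{-2t}$. Your coefficientwise justification of the integrated Poincar\'e step, $\sum_{\alpha:\,\exists j>d,\,\alpha_j\ge1}c_\alpha^2\le\sum_\alpha c_\alpha^2\sum_{j>d}\alpha_j$, is a slightly more self-contained version of the step the paper states in pointwise conditional form and then integrates.
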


The first term in \eqref{eq:two-index-approx} controls dependence on discarded input coordinates, while the second controls discarded output directions. The first term follows from the conditional Gaussian Poincar\'e inequality. The class and truncation are defined through the eigenbasis $\{e_k\}$. Both indices are needed over $\mathbb D^{2,2}(\gamma)$: Section~\ref{app:expressivity} gives a counterexample without input regularity, while output regularity controls the tail $\sum_{k>d}\|u_k\|^2$.

\begin{rem}
\label{rem:input-counterexample}
The family of Section~\ref{app:expressivity} is excluded by the second condition. For that family, There $\phi_m(z)=an^{-2}\sin(nz_1)\sin(z_m)$ with
$n=m^{s/2}$, so $u_1=an^{-1}\cos(nz_1)\sin(z_m)$ and
$D_mu_1=an^{-1}\cos(nz_1)\cos(z_m)$, whence
$\|D_mu_1\|_{L^2(\gamma)}^2=\tfrac{a^2}{n^2}\,\mathbb E[\cos^2(nZ)]\,\mathbb E[\cos^2 Z]
\ \ge\ c\,a^2n^{-2}$ for an absolute $c>0$ and $n\ge1$. Retaining only the term $j=m$,
\[
  \|T\|_{\mathrm{in},t}^2\ \ge\ m^{2t}\,\|D_mu_1\|_{L^2(\gamma)}^2\ \ge\ c\,a^2\,m^{2t-s},
\]
which diverges as $m\to\infty$ whenever $t>s/2$. The family therefore stays in a fixed
$\mathcal W^s$ ball while leaving every $\mathcal W^{s,t}$ ball with $t>s/2$, which identifies the role of the input regularity condition in Theorem~\ref{thm:two-index-approx}.
\end{rem}

\subsection{From a cylindrical map to a finite-parameter sieve}
\label{subsec:complexity}

Theorem~\ref{thm:two-index-approx} gives a cylindrical map with no finite parametrization. We next introduce a finite Hermite parametrization for a structured subclass and record its dimension.

Let $\{h_n\}_{n\ge0}$ be the Hermite polynomials orthonormal in $L^2(N(0,1))$, so
$h_n'=\sqrt n\,h_{n-1}$, and for a multi-index $\alpha\in\mathbb N^{\mathbb N}$ of finite
support put $H_\alpha:=\prod_j h_{\alpha_j}(\hat e_j)$. Every $\phi\in L^2(\gamma)$ has a
chaos expansion $\phi=\sum_\alpha c_\alpha H_\alpha$. Two elementary identities drive
everything below. Since $\partial_kH_\alpha=\sqrt{\alpha_k}\,H_{\alpha-e_k}$,
\begin{equation}
\label{eq:hermite-orth}
  \big\langle\nabla_{\mathcal H}H_\alpha,\nabla_{\mathcal H}H_\beta\big\rangle_{L^2(\gamma;\mathcal H)}
  =|\alpha|\,\delta_{\alpha\beta},
  \qquad |\alpha|:=\sum_j\alpha_j .
\end{equation}
The gradient fields $\{\nabla_{\mathcal H}H_\alpha\}$ are therefore orthogonal in
$L^2(\gamma;\mathcal H)$, with squared norms $|\alpha|$. In these coordinates
\begin{equation}
\label{eq:norms-in-chaos}
  \|T-I\|_{L^2(\gamma;\mathcal H)}^2=\sum_\alpha c_\alpha^2|\alpha|,
  \qquad
  \|T\|_{\mathrm{out},s}^2=\sum_\alpha c_\alpha^2\,w_s(\alpha),
  \qquad
  \|T\|_{\mathrm{in},t}^2=\sum_\alpha c_\alpha^2\,(|\alpha|-1)\,w_t(\alpha),
\end{equation}
with $w_r(\alpha):=\sum_jj^{2r}\alpha_j$; these are verified in
Section~\ref{app:two-index}. Since $w_r(\alpha)\ge|\alpha|$, the output condition alone
gives $\|T-I\|^2\le B_1^2$.

\begin{definition}
\label{def:structured}
For integers $1\le q\le m$ let
$\Lambda_{q,m}:=\{\alpha:1\le|\alpha|\le m,\ |\operatorname{supp}\alpha|\le q\}$, and let
\[
  \mathcal C_{q,m}^{s}(B):=\Big\{T=I+\nabla_{\mathcal H}\phi:\ \|T\|_{\mathrm{out},s}\le B,\ \
  \phi=\textstyle\sum_{\alpha\in\Lambda_{q,m}}c_\alpha H_\alpha\Big\}.
\]
\end{definition}

The potential is a polynomial of degree at most $m$ in the Cameron--Martin coordinates,
and a sum of terms each involving at most $q$ coordinates, an analysis of variance
truncated at interaction order $q$. Only the output condition is imposed. Since a multi-index with $|\alpha|\le m$ has support size at most $m$, we take $q\le m$; otherwise the dimension count below is $d^{\,q\wedge m}$.

The diagonal class of Definition~\ref{def:diag-class} is contained in
$\mathcal C_{1,2}^{s}(B)$, since a diagonal potential
$\tfrac12\sum_ka_k(\hat e_k^2-1)$ has $c_{2e_k}=a_k/\sqrt2$ and
$\|T\|_{\mathrm{out},s}^2=\sum_kk^{2s}a_k^2$. For bounded chaos degree, no separate input index is needed. If $\alpha\notin\mathcal I_d$ below, some $j>d$ has $\alpha_j\ge1$, so
$w_s(\alpha)\ge(d+1)^{2s}$, while $|\alpha|\le m$ bounds the term. Thus the degree bound limits dependence on discarded input coordinates. By \eqref{eq:norms-in-chaos} and $|\alpha|\le m$,
\[
  \|T\|_{\mathrm{in},s}^2=\sum_\alpha c_\alpha^2(|\alpha|-1)w_s(\alpha)
  \le(m-1)\sum_\alpha c_\alpha^2w_s(\alpha)\le(m-1)B^2 ,
\]
so $\mathcal C_{q,m}^{s}(B)\subseteq\mathcal W^{s,s}\big(B,\sqrt{m-1}\,B\big)$, with any positive second radius when $m=1$. Thus the degree bound gives the required input-derivative control at the matched index.

The sieve is the corresponding finite index set and the linear family it spans,
\begin{equation}
\label{eq:hermite-sieve}
  \mathcal I_d:=\{\alpha\in\Lambda_{q,m}:\operatorname{supp}\alpha\subseteq[d]\},
  \qquad
  g_\theta:=\sum_{\alpha\in\mathcal I_d}\theta_\alpha H_\alpha,
  \qquad
  p_d:=|\mathcal I_d| .
\end{equation}
Counting supports of each size gives
$p_d=\sum_{r=1}^{q\wedge d}\binom dr\binom mr\le q\,(dm)^q$, and $p_d\sim\binom
mq d^q/q!$ as $d\to\infty$ at fixed $q\le m$: the sieve dimension grows polynomially in
$d$ with exponent equal to the interaction order.

Because the family is linear in $\theta$ and the features are orthogonal by
\eqref{eq:hermite-orth}, the population projection is explicit. We use the coordinatewise moment estimator With
$D_i=(D_{ik})_{k\le d}$ the observed displacement coordinates \eqref{eq:Dik} and
$\nabla H_\alpha(Z_i)$ the feature evaluated at the $i$th design point, set
\begin{equation}
\label{eq:proj-estimator}
  \widehat c_\alpha:=\frac1{N|\alpha|}\sum_{i=1}^N
  \big\langle D_i,\nabla H_\alpha(Z_i)\big\rangle,
  \qquad \alpha\in\mathcal I_d,
  \qquad
  \widehat T_d:=I+\sum_{\alpha\in\mathcal I_d}\widehat c_\alpha\nabla_{\mathcal H}H_\alpha .
\end{equation}
Each $\widehat c_\alpha$ is an average of i.i.d.\ real random variables and is unbiased for
$c_\alpha$ by orthogonality of the Hermite features.

\begin{theorem}
\label{thm:structured-risk}
Fix $1\le q\le m$ and $s>0$, and let $\widehat T_d$ be the estimator
\eqref{eq:proj-estimator}. Then, for every $N\ge1$ and $d\ge1$,
\begin{equation}
\label{eq:structured-risk}
  \sup_{T\in\mathcal C_{q,m}^{s}(B)}
  \mathbb E_T\big\|\widehat T_d-T\big\|_{L^2(\gamma;\mathcal H)}^2
  \ \le\ \big(9^{m-1}B^2+\sigma^2\big)\frac{p_d}{N}
  \ +\ m\,B^2(d+1)^{-2s} .
\end{equation}
Choosing $d\asymp\big(N/(B^2+\sigma^2)\big)^{1/(q+2s)}$ gives
\begin{equation}
\label{eq:structured-rate}
  \sup_{T\in\mathcal C_{q,m}^{s}(B)}
  \mathbb E_T\big\|\widehat T_d-T\big\|_{L^2(\gamma;\mathcal H)}
  \ \lesssim\ \Big(\frac{B^2+\sigma^2}{N}\Big)^{s/(q+2s)},
\end{equation}
with an implicit constant depending only on $q$, $m$ and $B$.
\end{theorem}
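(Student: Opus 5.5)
We use the orthogonality \eqref{eq:hermite-orth} to split the risk into bias and variance, both computed coefficient by coefficient. Write $T-I=\sum_{\alpha\in\Lambda_{q,m}}c_\alpha\nabla_{\mathcal H}H_\alpha$ and $\widehat T_d-I=\sum_{\alpha\in\mathcal I_d}\widehat c_\alpha\nabla_{\mathcal H}H_\alpha$. By \eqref{eq:hermite-orth},
\[
\|\widehat T_d-T\|_{L^2(\gamma;\mathcal H)}^2=\sum_{\alpha\in\mathcal I_d}|\alpha|(\widehat c_\alpha-c_\alpha)^2+\sum_{\alpha\in\Lambda_{q,m}\setminus\mathcal I_d}|\alpha|c_\alpha^2 .
\]

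\textbf{Bias term.} Take $\alpha\notin\mathcal I_d$. Then some $j>d$ has $\alpha_j\ge1$, so $w_s(\alpha)\ge(d+1)^{2s}$. Also $|\alpha|\le m$. Hence
\[
|\alpha|c_\alpha^2\le m\,c_\alpha^2\,w_s(\alpha)(d+1)^{-2s}.
\]
Summing and using \eqref{eq:norms-in-chaos} gives the bound $mB^2(d+1)^{-2s}$.

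\textbf{Variance term.} First check unbiasedness. Write $D_{ik}=u_k(f_0^{(i)})+\xi^{(i)}_k$; this holds because $\langle T(f)-f,\varphi_k\rangle_{\mathcal F}/\sqrt{\lambda_k}=\langle\nabla_{\mathcal H}\phi,e_k\rangle_{\mathcal H}$. The noise has mean zero and is independent of the design. By orthogonality, $\mathbb E\langle\nabla\phi(Z),\nabla H_\alpha(Z)\rangle=|\alpha|c_\alpha$. So $\widehat c_\alpha$ is unbiased, and
\[
|\alpha|\,\mathbb E(\widehat c_\alpha-c_\alpha)^2=\frac{1}{N|\alpha|}\operatorname{Var}\big(\langle D,\nabla H_\alpha(Z)\rangle\big).
\]
The noise part contributes $\sigma^2\mathbb E\|\nabla H_\alpha(Z)\|^2=\sigma^2|\alpha|$, which gives $\sigma^2/N$ per coefficient. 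The signal part is bounded by
\[
\mathbb E\big[\langle\nabla\phi(Z),\nabla H_\alpha(Z)\rangle^2\big]\le\mathbb E\big[\|\nabla\phi\|^2\|\nabla H_\alpha\|^2\big].
\]
We need this to be at most $9^{m-1}B^2|\alpha|$. Summing over the $p_d$ indices in $\mathcal I_d$ then gives $(9^{m-1}B^2+\sigma^2)p_d/N$.

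\textbf{Main obstacle: the $9^{m-1}$ constant.} The fourth moment of degree-$\le m$ polynomials must be controlled. We would use Gaussian hypercontractivity: for a polynomial $F$ of degree $\le n$, $\|F\|_{L^4(\gamma)}\le 3^{n/2}\|F\|_{L^2(\gamma)}$. Apply Cauchy--Schwarz,
\[
\mathbb E\big[\|\nabla\phi\|^2\|\nabla H_\alpha\|^2\big]\le\big\|\,\|\nabla\phi\|^2\big\|_{L^2}\,\big\|\,\|\nabla H_\alpha\|^2\big\|_{L^2}.
\]
Each $\|\nabla\phi\|^2=\sum_k(D_k\phi)^2$ has components of degree $\le m-1$. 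Minkowski in $L^2$ over the sum in $k$ gives $\big\|\sum_k(D_k\phi)^2\big\|_{L^2}\le\sum_k\|D_k\phi\|_{L^4}^2\le 3^{m-1}\sum_k\|D_k\phi\|_{L^2}^2$. The same estimate holds for $H_\alpha$. This yields
\[
\mathbb E\big[\|\nabla\phi\|^2\|\nabla H_\alpha\|^2\big]\le 9^{m-1}\|T-I\|^2|\alpha|\le 9^{m-1}B^2|\alpha|,
\]
using $\|T-I\|^2\le B_1^2=B^2$, noted after \eqref{eq:norms-in-chaos}.

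\textbf{Rate.} Insert $p_d\lesssim_{q,m}d^q$ into \eqref{eq:structured-risk}. Choose $d\asymp(N/(B^2+\sigma^2))^{1/(q+2s)}$ to balance the two terms. Jensen's inequality converts the squared-risk bound into \eqref{eq:structured-rate}. The constants absorb $9^{m-1}$ and $m$.
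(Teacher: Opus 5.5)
Your proposal is correct and matches the paper's proof essentially step for step. Both arguments use the exact orthogonal split from \eqref{eq:hermite-orth}, the bias bound via $w_s(\alpha)\ge(d+1)^{2s}$ and $|\alpha|\le m$, unbiasedness of $\widehat c_\alpha$ by Hermite orthogonality, and the factor $9^{m-1}$ from Cauchy--Schwarz, Minkowski and the hypercontractive bound $\|F\|_4\le 3^{n/2}\|F\|_2$, applied separately to $u$ and to $\nabla_{\mathcal H}H_\alpha$, with the rate then following from $p_d\le q(dm)^q$, balancing the two terms, and Jensen's inequality.
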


\begin{corollary}
\label{cor:additive-minimax}
Let $q=1$ and $m\ge2$, and let $\mathcal C_{1,m}^{s}(B)^{\mathrm{OT}}$ be the transport
subclass \eqref{eq:transport-subclass}. Then
\[
  \inf_{\widehat T}\ \sup_{T\in\mathcal C_{1,m}^{s}(B)^{\mathrm{OT}}}\
  \mathbb E_T\big\|\widehat T-T\big\|_{L^2(\gamma;\mathcal H)}\ \asymp\ N^{-s/(2s+1)},
\]
and the same holds with the supremum over the full gradient class
$\mathcal C_{1,m}^{s}(B)$. The estimator \eqref{eq:proj-estimator} is therefore minimax
rate-optimal on both.
\end{corollary}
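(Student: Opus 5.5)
The plan is to prove the two directions separately, using Theorem~\ref{thm:structured-risk} for the upper bound and a Fano argument modelled on Lemma~\ref{lem:hypothesis} and Theorem~\ref{thm:minimax} for the lower bound.

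\textbf{Upper bound.} With $q=1$ the sieve dimension is $p_d=\sum_{r\le 1}\binom dr\binom mr=dm$. Inserting this into \eqref{eq:structured-risk} gives
\[
\mathbb E_T\|\widehat T_d-T\|^2\le (9^{m-1}B^2+\sigma^2)\,dm/N+mB^2(d+1)^{-2s}.
\]
Balancing the two terms with $d\asymp N^{1/(2s+1)}$ yields, via Jensen, the bound $\mathbb E_T\|\widehat T_d-T\|\lesssim N^{-s/(2s+1)}$. This holds uniformly over $\mathcal C^s_{1,m}(B)$ and hence also over its transport subclass. Since $\mathcal C^{s}_{1,m}(B)^{\mathrm{OT}}\subseteq\mathcal C^s_{1,m}(B)$, the minimax risk over the full class is at least that over the subclass. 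It therefore suffices to prove the lower bound on the transport subclass.

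\textbf{Lower bound.} Take $\pi_0=\gamma$ and reuse the hypercube of Section~\ref{subsec:hypothesis} with potentials
\[
\phi_\omega=\tfrac{\tau}{2}\sum_{k=d+1}^{2d}\omega_k(\hat e_k^2-1).
\]
Under $\gamma$ we have $c_k=1$, and the centering only shifts $\phi$ by a constant, so the map is unchanged. Each $\phi_\omega$ is a sum of terms $c_{2e_k}H_{2e_k}$ with $|\alpha|=2\le m$ and $|\operatorname{supp}\alpha|=1$. Hence $\phi_\omega$ lies in the span of $\Lambda_{1,m}$, and this is where $m\ge2$ is needed. The coefficients $\tau\omega_k\ge0$ make $\phi_\omega$ $\mathcal H$-convex, as verified in Section~\ref{subsec:hypothesis}, so $T_\omega$ belongs to the transport subclass. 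Its output norm satisfies
\[
\|T_\omega\|_{\mathrm{out},s}^2=\sum_k k^{2s}\tau^2\omega_k\le (2d)^{2s}d\,\tau^2,
\]
which is the content of Lemma~\ref{lem:hypothesis}(i). Membership in the $B$-ball is thus enforced by choosing $\tau\asymp B d^{-s-1/2}$.

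From Lemma~\ref{lem:hypothesis} we take the Varshamov--Gilbert subset $\Omega$ with $\log|\Omega|\ge cd$, the separation $\|T_\omega-T_{\omega'}\|^2\ge c_2\tau^2 d$, and the bound $\mathrm{KL}\le N\tau^2d/(2\sigma^2)$. Fano's inequality then requires $N\tau^2 d/(2\sigma^2)\le \tfrac{c}{4}\,d$, i.e.\ $\tau^2\lesssim\sigma^2/N$. Combined with the constraint $\tau\asymp Bd^{-s-1/2}$, this forces
\[
d\asymp (N B^2/\sigma^2)^{1/(2s+1)}.
\]
The resulting separation is $\sqrt{c_2}\,\tau\sqrt d\asymp d^{-s}\asymp N^{-s/(2s+1)}$. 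Markov's inequality converts the Fano probability bound into an expected-norm bound of the same order, exactly as in Theorem~\ref{thm:minimax}. This requires $N\ge N_0$ so that $d\ge1$ and the Varshamov--Gilbert construction applies.

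\textbf{Main obstacle.} The step needing most care is verifying that each hypothesis lies in the transport subclass $\mathcal C^s_{1,m}(B)^{\mathrm{OT}}$, and not merely in $\mathcal W^s(B)$. Three points must be checked:
\begin{itemize}
\item the Hermite representation has $H_{2e_k}=(\hat e_k^2-1)/\sqrt2$, so the coefficient is $c_{2e_k}=\tau\omega_k/\sqrt2$ and the output norm is computed through \eqref{eq:norms-in-chaos} with $w_s(2e_k)=2k^{2s}$;
\item $\mathcal H$-convexity holds, since the coefficients are nonnegative;
\item (A1)--(A3) hold, as already argued in Section~\ref{subsec:hypothesis}.
\end{itemize}
The remaining constants then depend only on $s,B,\sigma,m$.
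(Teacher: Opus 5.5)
Your proposal is correct and follows the paper's own route. The upper bound is Theorem~\ref{thm:structured-risk} at $q=1$, with $p_d=dm$, and it transfers to the transport subclass by inclusion. The lower bound comes from observing that the Fano hypothesis family of Theorem~\ref{thm:minimax} at $\pi_0=\gamma$ consists of $\mathcal H$-convex potentials supported on the chaos indices $2e_k$, so it lies in $\mathcal C^{s}_{1,2}(B)^{\mathrm{OT}}\subseteq\mathcal C^{s}_{1,m}(B)^{\mathrm{OT}}$. Your explicit centering $\hat e_k^2-1$ is slightly more careful than the paper's wording: $\Lambda_{1,m}$ excludes the constant chaos index, which the uncentered potential $\tfrac\tau2\sum_k\omega_k\hat e_k^2$ carries.
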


The upper bound is \eqref{eq:structured-rate} at $q=1$, valid on the gradient class and
hence on the subclass. The lower bound is Theorem~\ref{thm:minimax}: at $\pi_0=\gamma$ its
hypothesis family has potentials $\tfrac\tau2\sum_k\omega_k\hat e_k^2$, whose chaos
expansion is supported on the indices $2e_k$ and whose second derivatives are the constants
$\tau\omega_k\in[0,\tau]$, so $1+\tau\omega_k>0$ and the family lies in
$\mathcal C_{1,2}^{s}(B)^{\mathrm{OT}}\subseteq\mathcal C_{1,m}^{s}(B)^{\mathrm{OT}}$. The bound therefore applies to the transport subclass. For comparison with the diagonal class of Definition~\ref{def:diag-class}, $\mathcal C_{q,m}^{s}(B)$ as defined is
a class of gradient maps and the standing model of Section~\ref{subsec:rep} asks for
transport maps. Within $\mathcal C_{1,m}^{s}(B)$ the additional requirement is
$1+\psi_k''\ge0$ for each univariate potential $\psi_k$, which is
Definition~\ref{def:Hconvex} in the separable case. Since $\psi_k''$ is a polynomial of
degree $m-2$ bounded below by $-1$ on all of $\mathbb R$, it must be constant or of even
degree with positive leading coefficient. So $m=3$ adds no transport map beyond $m=2$: a
nonzero cubic term makes $1+\psi_k''$ a non-constant affine function, which changes sign.
From $m=4$ the transport subclass is strictly larger than the diagonal class, for instance
$\psi_k(z)=a_kz^4$ with $a_k>0$, for which $1+12a_kz^2>0$ everywhere and the induced map
$z\mapsto z+4a_kz^3$ is nonlinear.

The estimator attains $s/(q+2s)$ at every $q$; for $q\ge2$ its optimality is open. What
the exponent does show is how the sieve dimension
enters. The parameter count $p_d$ grows like $d^q$, and the balance replaces the $1$ of the additive case by
$q$.

\begin{rem}
\label{rem:structured-scope}
The scope of Theorem~\ref{thm:structured-risk} is as follows. The degree $m$ is fixed and the constant $9^{m-1}$ is exponential in it, so the result is
informative for low-degree, low-order interactions; Section~\ref{subsec:three-index}
examines that constant and lets $m$ grow. As stated the class does not contain the
trigonometric block potentials of Section~\ref{subsec:block}, which have unbounded chaos
degree. And for $q\ge2$ the optimality of the exponent is open.
\end{rem}

\subsection{Three-index interpolation with growing chaos degree}
\label{subsec:three-index}

Theorem~\ref{thm:two-index-approx} holds at unbounded chaos degree but produces no finite
parametrization; Theorem~\ref{thm:structured-risk} produces one but fixes the degree.
This subsection lets the degree grow with $N$ and exhibits how the three regularity
indices, together with the interaction order, jointly determine the rate.

For the estimator \eqref{eq:proj-estimator}, the exponential factor also appears in the variance. Take
$q=1$ and $\phi=c\,h_m(\hat e_1)$, so that $u_1=c\sqrt m\,h_{m-1}(\hat e_1)$ and
$\langle u,\nabla H_{me_1}\rangle=c\,m\,h_{m-1}(\hat e_1)^2$. Its variance is
$c^2m^2\big(\mathbb E[h_{m-1}^4]-1\big)$, while $\|T-I\|^2|\alpha|=c^2m^2$, so the ratio is
$\mathbb E[h_{m-1}^4]-1$. The fourth moments of normalized Hermite polynomials satisfy
$\mathbb E[h_n^4]=\sum_{r=0}^n\binom nr^4(r!)^2(2n-2r)!/(n!)^2$, and retaining the $r=0$
term alone gives
\[
  \mathbb E[h_n^4]\ \ge\ \binom{2n}n\ \ge\ \frac{4^n}{2\sqrt n},
\]
so the ratio grows exponentially in the degree. Thus the variance itself contains the exponential factor. A least-squares estimator would require control of the smallest eigenvalue of the empirical Gram matrix for high-degree Hermite features and of the truncated part of the expansion under random design.

The exponential variance factor limits the usable degree to $m=O(\log N)$. A polynomial chaos-degree tail,
$\sum_{|\alpha|>m}c_\alpha^2|\alpha|\lesssim m^{-2\beta}$, would then contribute only
$(\log N)^{-2\beta}$. We therefore assume geometric decay in the chaos degree.

\begin{definition}
\label{def:three-index}
For $\varpi>0$ set
$\|T\|_{\mathrm{ch},\varpi}^2:=\sum_\alpha c_\alpha^2|\alpha|\,e^{2\varpi|\alpha|}$, and
for $q\ge1$, $s,t>0$ let
\[
\begin{aligned}
  \mathcal K_q^{s,t,\varpi}(B_1,B_2,B_3):=\Big\{T=I+\nabla_{\mathcal H}\phi:\ \
  &|\operatorname{supp}\alpha|\le q\text{ whenever }c_\alpha\ne0,\\
  &\|T\|_{\mathrm{out},s}\le B_1,\ \|T\|_{\mathrm{in},t}\le B_2,\
  \|T\|_{\mathrm{ch},\varpi}\le B_3\Big\}.
\end{aligned}
\]
\end{definition}

No degree restriction is imposed. The index $\varpi$ asks the chaos coefficients to decay
geometrically, which is a Gaussian analyticity condition and is satisfied for every
$\varpi>0$ by the trigonometric potentials of Section~\ref{subsec:block}: the coefficients of
$\cos$ in the Hermite basis are $e^{-1/2}/\sqrt{n!}$ up to sign, so
$\sum_nn\,e^{2\varpi n}/n!<\infty$ for every $\varpi$. The class
$\mathcal K_q^{s,t,\varpi}$ therefore contains potentials that
Definition~\ref{def:structured} excludes.

\begin{theorem}
\label{thm:three-index}
Let $q\ge1$, $s,t>0$ and $\varpi>\tfrac12\log3$, and set
\[
  A_\varpi:=\frac{e^{-2\varpi}}{1-3e^{-2\varpi}},
  \qquad
  \rho_1:=\frac{2\varpi}{\log3}>1,
  \qquad
  \rho_0:=\frac{2s}{q+2s},
  \qquad
  \chi:=\frac{\rho_0}{\rho_1+\rho_0}\in(0,1).
\]
Take $m_N:=\lceil \chi\log N/\log3\rceil\vee q$ and
$d_N:=\big\lceil\big(N^{1-\chi}m_N^{-(q-1)}\big)^{1/(q+2s)}\big\rceil\vee1$, and let
$\widehat T$ be the estimator
\eqref{eq:proj-estimator} on the index set $\mathcal I_{d_N,m_N}$ of \eqref{eq:hermite-sieve}.
Then there is $C=C(q,s,\varpi,B_1,B_3,\sigma)$, proportional to
$A_\varpi B_3^2+\sigma^2$ in its dependence on the noise and the analytic radius, such
that for all $N\ge3$,
\begin{equation}
\label{eq:three-index-rate}
  \sup_{T\in\mathcal K_q^{s,t,\varpi}(B_1,B_2,B_3)}
  \mathbb E_T\big\|\widehat T-T\big\|_{L^2(\gamma;\mathcal H)}^2
  \ \le\ C\,(\log N)^{q}\;N^{-\frac{\rho_1\rho_0}{\rho_1+\rho_0}} .
\end{equation}
\end{theorem}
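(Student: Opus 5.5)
The plan is to exploit that \eqref{eq:proj-estimator} is linear and orthogonal, so the risk splits exactly by \eqref{eq:hermite-orth}. Writing $\mathcal I:=\mathcal I_{d_N,m_N}$, I would start from
\[
\mathbb E\|\widehat T-T\|_{L^2(\gamma;\mathcal H)}^2=\sum_{\alpha\in\mathcal I}|\alpha|\,\mathrm{Var}(\widehat c_\alpha)+\sum_{\alpha\notin\mathcal I}c_\alpha^2|\alpha| ,
\]
using that each $\widehat c_\alpha$ is unbiased.

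\emph{Bias.} Every $\alpha\notin\mathcal I$ with $c_\alpha\neq0$ satisfies $|\operatorname{supp}\alpha|\le q$, so it either has $|\alpha|>m_N$ or has some $j>d_N$ with $\alpha_j\ge1$.
\begin{itemize}
\item In the first case, $c_\alpha^2|\alpha|\le e^{-2\varpi(m_N+1)}c_\alpha^2|\alpha|e^{2\varpi|\alpha|}$. Summing gives at most $B_3^2e^{-2\varpi m_N}\le B_3^2N^{-\rho_1\chi}$, because $3^{m_N}\ge N^\chi$ and $e^{2\varpi}=3^{\rho_1}$.
\item In the second case, $w_s(\alpha)\ge(d_N+1)^{2s}$ and $w_s(\alpha)\ge|\alpha|$. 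Then \eqref{eq:norms-in-chaos} gives at most $B_1^2(d_N+1)^{-2s}$.
\end{itemize}
The input index $t$ is not needed here beyond membership in the class.

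\emph{Variance.} I would bound $|\alpha|\mathrm{Var}(\widehat c_\alpha)\le N^{-1}|\alpha|^{-1}\mathbb E\langle D,\nabla H_\alpha(Z)\rangle^2$. Writing $D=(u_k+\xi_k)_{k\le d_N}$, the noise is independent of the design, so its part equals $\sigma^2\|\nabla H_\alpha\|^2=\sigma^2|\alpha|$. The design part is $\mathbb E[(\sum_{k\in\operatorname{supp}\alpha}\sqrt{\alpha_k}\,u_kH_{\alpha-e_k})^2]$, and I would control it by hypercontractivity. I would decompose each $u_k=\sum_n u_k^{(n)}$ into chaos components and use $\|F\|_{L^4}\le 3^{n/2}\|F\|_{L^2}$ on the $n$th chaos. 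For $H_{\alpha-e_k}$ this gives $\|H_{\alpha-e_k}\|_4\le 3^{(|\alpha|-1)/2}$, and for $u_k$ it gives $\|u_k\|_4\le\sum_n3^{n/2}\|u_k^{(n)}\|_2$. Applying Cauchy--Schwarz in $n$ against $e^{2\varpi n}$ yields a factor $\sum_n3^ne^{-2\varpi n}=A_\varpi$, which is finite because $\varpi>\tfrac12\log3$. Summing over $k$ recombines into $\|T\|_{\mathrm{ch},\varpi}^2$, so the design part is at most a constant times $A_\varpi B_3^2\,3^{|\alpha|}|\alpha|$, with the factor $|\alpha|$ coming from $\sum_k\alpha_k$. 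Hence the total variance is
\[
\sum_{\alpha\in\mathcal I}|\alpha|\mathrm{Var}(\widehat c_\alpha)\lesssim (A_\varpi B_3^2\,3^{m_N}+\sigma^2)\,\frac{p_{d_N}}{N},\qquad p_{d_N}\le q\,(d_Nm_N)^q .
\]

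\emph{Balance.} Inserting $3^{m_N}\lesssim N^\chi$ (up to the rounding and the $\vee q$), the variance is of order $N^{\chi-1}d_N^qm_N^q$. The choice of $d_N$ makes this $m_N\,d_N^{-2s}$, since $d_N^{q+2s}\asymp N^{1-\chi}m_N^{-(q-1)}$. Both variance and bias are then of order $m_N^{\,c}N^{-(1-\chi)\rho_0}$ for some power $c\le q$. The exponents match because
\[
(1-\chi)\rho_0=\rho_1\chi=\frac{\rho_1\rho_0}{\rho_1+\rho_0},
\]
which also matches the chaos-tail bias. Finally $m_N\lesssim\log N$ gives the $(\log N)^q$ factor.

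The main obstacle is the hypercontractive step. I must verify that cross terms across different $k$ and different chaos levels are absorbed without an extra factor in $q$ or $d$. I would first attempt it with Minkowski in $L^4$ over $k\in\operatorname{supp}\alpha$ (at most $q$ terms), and accept a constant depending on $q$.
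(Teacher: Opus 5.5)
Your plan follows the paper's proof. The ingredients match: the exact orthogonal split with unbiased $\widehat c_\alpha$, the chaos tail bound $B_3^2e^{-2\varpi m_N}\le B_3^2N^{-\rho_1\chi}$, a hypercontractive variance bound of order $(A_\varpi B_3^2\,3^{m_N}+\sigma^2)p_{d_N}/N$, and the same balance via $(1-\chi)\rho_0=\rho_1\chi$.

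The obstacle you flag is not a real one. After Minkowski and H\"older over $k\in\operatorname{supp}\alpha$, Cauchy--Schwarz gives $\bigl(\sum_k\sqrt{\alpha_k}\,\|u_k\|_4\bigr)^2\le|\alpha|\sum_k\|u_k\|_4^2\le|\alpha|\,A_\varpi B_3^2$, with no factor of $q$. The paper does the equivalent step globally. It bounds $\mathbb E\|u\|_{\mathcal H}^4\le(A_\varpi B_3^2)^2$ in Lemma~\ref{lem:analytic-moment} and then applies Cauchy--Schwarz against $\mathbb E\|\nabla H_\alpha\|^4\le 9^{|\alpha|-1}|\alpha|^2$.

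\textbf{The step that fails is your coordinate-truncation bias.} From $w_s(\alpha)\ge(d_N+1)^{2s}$ and $w_s(\alpha)\ge|\alpha|$ you cannot conclude $c_\alpha^2|\alpha|\le(d_N+1)^{-2s}c_\alpha^2w_s(\alpha)$. That would require $w_s(\alpha)\ge|\alpha|(d_N+1)^{2s}$.
\begin{itemize}
\item For $q\ge2$, the index $\alpha=(n-1)e_1+e_{d_N+1}$ has $|\alpha|=n$ and $w_s(\alpha)=n-1+(d_N+1)^{2s}$. The per-term ratio is then about $n(d_N+1)^{-2s}$, not $(d_N+1)^{-2s}$.
\item Your claim does hold for $q=1$ but not in general.
\end{itemize}

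\textbf{How to repair it.} First make your two cases disjoint by putting every $\alpha$ with $|\alpha|>m_N$ into the first case. The second case then has $|\alpha|\le m_N$, and the output index alone gives
\[
c_\alpha^2|\alpha|\le m_N\,c_\alpha^2\le m_N(d_N+1)^{-2s}c_\alpha^2w_s(\alpha),
\]
so the bias is at most $m_NB_1^2(d_N+1)^{-2s}$. This is exactly the paper's bound on term (II). Removing the factor $m_N$ would require the input index, via the paper's alternative bound $B_1^2(d_N+1)^{-2s}+2B_2^2(d_N+1)^{-2t}$.

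The correction does not change the result. At your choice of $d_N$ you already showed the variance is of order $m_Nd_N^{-2s}$, so the corrected bias is of the same order. Both are $m_N^{\,1+2s(q-1)/(q+2s)}N^{-(1-\chi)\rho_0}$, and the exponent of $m_N$ is at most $q$. The stated $(\log N)^q$ rate therefore stands.
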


The exponent combines the analytic index and the finite-degree exponent harmonically:
$\big(\rho_1\rho_0/(\rho_1+\rho_0)\big)^{-1}=\rho_1^{-1}+\rho_0^{-1}$. As
$\varpi\to\infty$ it increases to $\rho_0=2s/(q+2s)$, so \eqref{eq:three-index-rate}
recovers the exponent of Theorem~\ref{thm:structured-risk} in the limit of very fast chaos
decay, at the cost of a polylogarithmic factor that fixing the degree removes. At the
threshold $\varpi\downarrow\tfrac12\log3$ we have $\rho_1\downarrow1$ and the exponent
falls to $\rho_0/(1+\rho_0)$; below the threshold the fourth-moment bound of
Lemma~\ref{lem:analytic-moment} diverges and the argument gives nothing.

\begin{rem}
\label{rem:three-index-roles}
The four structural quantities act in different places.
The interaction order $q$ and the output index $s$ fix the sieve dimension and the
coordinate-truncation error, and enter the exponent through $\rho_0=2s/(q+2s)$. The
analytic index $\varpi$ fixes the degree-truncation error and enters through
$\rho_1=2\varpi/\log3$, the $\log3$ coming from the hypercontractive constant that limits
how fast the degree may grow. The input index $t$ affects only logarithmic factors here. The
proof bounds the coordinate-truncation error by the smaller of $m_NB_1^2d^{-2s}$, which
uses the output index alone, and $B_1^2d^{-2s}+2B_2^2d^{-2t}$, which uses
Theorem~\ref{thm:two-index-approx}'s route and removes the factor $m_N$ when $t\ge s$.

For a finite-parameter sieve it is therefore the output index that governs the polynomial
exponent, while the input index enters Theorem~\ref{thm:two-index-approx}, where no degree
bound is available and no finite parametrization is claimed.
\end{rem}

\section{Three explicit classes}
\label{sec:examples}

We study three explicit classes. The diagonal Gaussian class is linear and coordinatewise. The block class allows nonlinear interactions within fixed-size blocks and has matching upper and lower bounds. The two-groups class has a non-Gaussian target and is estimated on a grid.

\subsection{Conjugate Gaussian inverse problem}
\label{subsec:gaussian}
Gaussian optimal transport on separable Hilbert spaces has also been studied from an operator-theoretic perspective; see, for example, \citet{yun2025gaussian}. We consider here the commuting, strictly positive case, for which the transport map is diagonal in the common eigenbasis.

Take $\pi_0=\gamma=N(0,C_0)$ and a centered Gaussian target $N(0,C_1)$ with commuting,
strictly positive covariance operators. In their common eigenbasis the transport map has
coordinate multiplier $a_k=\sqrt{\lambda_k^{(1)}/\lambda_k^{(0)}}$. We assume $\sum_k(a_k-1)^2<\infty$, which gives finite Cameron--Martin cost and equivalence of the
two Gaussian measures. For a conjugate inverse problem the multiplier is
\[
a_k=\left(1+\lambda_k^{(0)}\kappa_k^2/\varepsilon^2\right)^{-1/2},
\]
where $\kappa_k$ are the forward-operator eigenvalues and $\varepsilon$ is the noise
level of that inverse problem. These known multipliers provide a calibration target for
the paired-data estimator; detailed conjugate calculations appear in
Section~\ref{sup:gaussian-details}.

The displacement regression is $D_{ik}=(a_k-1)Z_{ik}+\xi_{ik}$. Coordinatewise least
squares, projected onto a compact interval containing the displacement coefficients,
gives an explicit sieve estimator. On the box $\Theta_N^{(B)}$ of
\eqref{eq:widened-box}, Proposition~\ref{prop:gaussian-verify} gives, for $N>2$,
\[
\sup_{T\in\mathcal W^s_{\mathrm{diag}}(B)}
\mathbb E_T\|\widehat T_N-T\|_{L^2(\gamma;\mathcal H)}^2
\le \frac{\sigma^2d_N}{N-2}+B^2d_N^{-2s}.
\]
Together with Theorem~\ref{thm:minimax}, this identifies the expected-norm minimax rate
$N^{-s/(2s+1)}$ on the diagonal class. More generally, the exact truncation bias is
$\sum_{k>d_N}(a_k-1)^2$. If $|a_k-1|\asymp k^{-\rho}$ with $\rho>1/2$, its square root
has order $d_N^{-(\rho-1/2)}$; this tail statement also applies at the boundary of the
weighted regularity range.

\subsection{Nonlinear block-interaction class}
\label{subsec:block}

The diagonal class of Definition~\ref{def:diag-class} is linear and coordinatewise. We next allow nonlinear interactions within fixed-size blocks while keeping the map linear in the block coefficients. This gives an explicit estimator and matching upper and lower bounds.

Throughout this subsection $\pi_0=\gamma$. Write $B_j:=\{2j-1,2j\}$ for the $j$th coordinate
block, $z_{B_j}:=(z_{2j-1},z_{2j})\in\mathbb R^2$, and fix the interaction profile
\begin{equation}
\label{eq:block-psi}
  \psi(x,y):=\tfrac13[\cos x+\cos(x+y)] ,
  \qquad
  \nabla\psi(x,y)=-\tfrac13\big(\sin x+\sin(x+y),\ \sin(x+y)\big).
\end{equation}
The Hessian is
\[
\nabla^2\psi(x,y)=-\tfrac13\left[
\cos x\begin{pmatrix}1&0\\0&0\end{pmatrix}
+\cos(x+y)\begin{pmatrix}1&1\\1&1\end{pmatrix}\right].
\]
Thus $\|\nabla^2\psi\|_{\rm op}\le1$, $\|\nabla^2\psi\|_F^2\le2$, and
$\|\nabla\psi\|^2\le5/9\le1$. Set
\begin{equation}
\label{eq:block-sigma2}
\varsigma^2:=\mathbb E\|\nabla\psi(Z)\|^2
=\frac{(1-e^{-2})/2+(1-e^{-4})+e^{-1/2}-e^{-5/2}}9>0,
\quad Z\sim N(0,I_2).
\end{equation}
Here $\mathbb E[\sin X\sin(X+Y)]=(e^{-1/2}-e^{-5/2})/2$ for independent standard
normal $X,Y$. Write $\mu_\psi=(e^{-1/2}+e^{-1})/3$.

\begin{definition}
\label{def:block-class}
For $s>0$, $B>0$ and $a_0\in(0,\tfrac12]$, let
\[
  \mathcal A_s(B,a_0):=\Big\{a=(a_j)_{j\ge1}:\ \sum_{j\ge1}j^{2s}a_j^2\le B^2,\ \
  |a_j|\le a_0\ \forall j\Big\},
\]
and for $a\in\mathcal A_s(B,a_0)$ define the centered potential and the map
\[
  \phi_a(f):=\sum_{j\ge1}a_j\Big[\psi\big(\hat e_{2j-1}(f),\hat e_{2j}(f)\big)-\mu_\psi\Big],
  \qquad
  T_a:=I+\nabla_{\mathcal H}\phi_a .
\]
\end{definition}

On block $j$ the map is $z\mapsto z+a_j\nabla\psi(z)$, the gradient of a potential
with Hessian between $I_2/2$ and $3I_2/2$. A fixed orthogonal change of coordinates cannot
make this family coordinatewise. The Hessians at $(0,\pi/2)$ and $(\pi/2,-\pi/2)$ have
commutator
\[
\frac19\begin{pmatrix}0&1\\-1&0\end{pmatrix}\ne0.
\]
If a fixed rotation separated the potential into one-variable functions, all its
Hessians would be diagonal in that basis and would commute.

\begin{proposition}
\label{prop:block-population}
Let $a\in\mathcal A_s(B,a_0)$ with $a_0\le\tfrac12$. Then $\phi_a\in\mathbb D^{1,2}(\gamma)$,
the coefficient fields on block $j$ are $u_{B_j}=a_j\nabla\psi(\hat e_{B_j})$, and
\begin{equation}
\label{eq:block-coef}
\|u_{2j-1}\|_{L^2(\gamma)}^2+\|u_{2j}\|_{L^2(\gamma)}^2=\varsigma^2a_j^2,
\qquad \|T_a\|_{\mathcal W^s}^2\le2^{2s}\varsigma^2B^2.
\end{equation}
Thus $T_a\in\mathcal W^s$. The pushforward $\pi_1:=T_{a\#}\gamma$ satisfies
$\pi_1\sim\gamma$, the pair $(\gamma,\pi_1)$ satisfies
\textnormal{(A1)}--\textnormal{(A3)}, and $T_a$ is the optimal map for the
Cameron--Martin cost, so Assumption~\ref{ass:rep} holds with potential $\phi_a$.
\end{proposition}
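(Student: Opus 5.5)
The proof splits into three parts: the Malliavin-regularity and coefficient-norm computations, absolute continuity of the pushforward, and optimality.

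\emph{Regularity and coefficients.} For $a\in\mathcal A_s(B,a_0)$, write $\phi_a$ as the $L^2(\gamma)$-limit of the cylindrical partial sums $\phi_a^{(J)}:=\sum_{j\le J}a_j[\psi(\hat e_{B_j})-\mu_\psi]$. The summands are independent across blocks, and centered because $\mathbb E\cos X=e^{-1/2}$ and $\mathbb E\cos(X+Y)=e^{-1}$, which gives $\mu_\psi$. Their variances are bounded by $a_j^2\|\psi\|_\infty^2\lesssim a_j^2$. By \eqref{eq:chain}, $\nabla_{\mathcal H}\phi_a^{(J)}=\sum_{j\le J}a_j\big(\partial_x\psi\,e_{2j-1}+\partial_y\psi\big)(\hat e_{B_j})\,e_{2j}$. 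The blocks occupy disjoint coordinates, so these gradients converge in $L^2(\gamma;\mathcal H)$ with squared norm $\sum_j a_j^2\varsigma^2<\infty$; note $\sum_j a_j^2\le B^2$ since $j^{2s}\ge1$. Closedness of the Malliavin derivative then gives $\phi_a\in\mathbb D^{1,2}(\gamma)$ and $u_{B_j}=a_j\nabla\psi(\hat e_{B_j})$, so the first identity in \eqref{eq:block-coef} holds by the definition of $\varsigma^2$. For the weighted norm, both indices of block $j$ satisfy $k\le 2j$, so $k^{2s}\le 2^{2s}j^{2s}$. Summing gives $\|T_a\|_{\mathcal W^s}^2\le 2^{2s}\varsigma^2\sum_j j^{2s}a_j^2\le 2^{2s}\varsigma^2B^2$.

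\emph{Equivalence $\pi_1\sim\gamma$ and (A1)--(A3).} The map acts blockwise: $T_a$ sends the block coordinates $z_{B_j}$ to $F_j(z):=z+a_j\nabla\psi(z)$. Since $|a_j|\le\tfrac12$ and $\|\nabla^2\psi\|_{\rm op}\le1$, the Jacobian of $F_j$ lies between $I_2/2$ and $3I_2/2$. Hence each $F_j$ is a $C^\infty$ diffeomorphism of $\mathbb R^2$, being the gradient of a strongly convex function with bounded Hessian. So $F_{j\#}N(0,I_2)$ has a smooth, strictly positive density $p_j$. Under $\gamma$ the blocks are independent, so $\pi_1$ is the product $\bigotimes_j F_{j\#}N(0,I_2)$ in the coordinates $\hat e_k$. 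By Kakutani's theorem, $\pi_1\sim\gamma$ follows from $\sum_j H^2(F_{j\#}N(0,I_2),N(0,I_2))<\infty$. I would bound this Hellinger distance by a constant times $|a_j|$ (or $a_j^2$) using the change-of-variables formula. Alternatively, the total-variation or KL distance is bounded by $\mathbb E|a_j\nabla\psi|^2$-type quantities: KL between the law of $Z+a_j\nabla\psi(Z)$ and $N(0,I_2)$ is $O(a_j^2)$ via the explicit density $\varphi(F_j^{-1}(w))|\det DF_j^{-1}(w)|$, with $\log\det(I+a_j\nabla^2\psi)=O(|a_j|)$ having mean of order $a_j^2$ after expansion. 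Squared Hellinger is then summable since $\sum a_j^2<\infty$. This is the step I expect to be the main obstacle: it needs a careful second-order expansion showing the first-order terms in $a_j$ cancel in expectation. If that fails, I would fall back on the Cameron--Martin/Ramer-type criterion for $I+\nabla_{\mathcal H}\phi_a$, with $\nabla^2_{\mathcal H}\phi_a$ Hilbert--Schmidt, where $\sum_j a_j^2\|\nabla^2\psi\|_F^2\le 2B^2$ and $I+\nabla^2_{\mathcal H}\phi_a\succeq I/2$. Given equivalence, (A3) holds with positive density. (A1) holds because $T_a(f)-f\in\mathcal H\hookrightarrow\mathcal F$ has finite second moment. (A2) holds with cost at most $\varsigma^2\sum a_j^2$.

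\emph{Optimality.} Since $\nabla^2\psi\succeq -I_2$ and $|a_j|\le\tfrac12$, the slice $h\mapsto\phi_a(f+h)+\tfrac12|h|^2_{\mathcal H}$ is a sum of convex block functions $a_j\psi(z_{B_j}+h_{B_j})+\tfrac12|h_{B_j}|^2$ plus a convex remainder $\tfrac12\sum_{k}h_k^2$ over unused directions. Its Hessian is at least $\tfrac12 I$, so $\phi_a$ is $\mathcal H$-convex. Optimality then follows blockwise as in the hypothesis-family argument of Section~\ref{subsec:hypothesis}. For any coupling, the cost splits over blocks, and each block cost is at least the squared $W_2$ distance between the block marginals. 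By finite-dimensional Brenier, the gradient of the convex function $\tfrac12|z|^2+a_j\psi$ attains this bound on each block. Uniqueness $\pi_0$-a.s.\ follows from Brenier uniqueness on each block together with the requirement that an optimal plan attain every block bound simultaneously. This yields Assumption~\ref{ass:rep} with potential $\phi_a$.
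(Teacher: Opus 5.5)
Your architecture is the paper's: closedness of the Cameron--Martin gradient gives $\phi_a\in\mathbb D^{1,2}(\gamma)$ and the coefficient identities. Optimality and uniqueness come from a blockwise Tonelli lower bound combined with two-dimensional Brenier maps. Kakutani's criterion, fed by a blockwise bound $\mathrm{KL}(\nu_{a_j}\|N(0,I_2))=O(a_j^2)$, gives $\pi_1\sim\gamma$.

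The gap is the step you flagged yourself, and as written it is partly wrong. It is not true that $\mathbb E\log\det(I_2+a\nabla^2\psi(Z))$ is of order $a^2$. Its linear term is
\[
a\,\mathbb E\operatorname{tr}\nabla^2\psi(Z)=-\tfrac a3\big(e^{-1/2}+2e^{-1}\big)\neq0 .
\]
Writing the change of variables out,
\[
\mathrm{KL}(\nu_a\|\gamma_2)=\mathbb E\Big[a\langle Z,\nabla\psi(Z)\rangle+\tfrac{a^2}2\|\nabla\psi(Z)\|^2-\log\det\big(I_2+a\nabla^2\psi(Z)\big)\Big].
\]
The first-order part of the log-determinant is cancelled by the term $a\langle Z,\nabla\psi(Z)\rangle$ coming from $\tfrac12|F(Z)|^2-\tfrac12|Z|^2$. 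The missing idea is Gaussian integration by parts, $\mathbb E\langle Z,\nabla\psi(Z)\rangle=\mathbb E\operatorname{tr}\nabla^2\psi(Z)$. Combine this with $|\log\det(I+M)-\operatorname{tr}M|\le\|M\|_F^2$ for symmetric $\|M\|_{\rm op}\le\tfrac12$, applied to $M=a\nabla^2\psi$ with $\|M\|_F^2\le2a^2$. This gives $\mathrm{KL}\le\tfrac{a^2}2+2a^2\le3a^2$, and hence summable squared Hellinger distances.

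The second-order bound is genuinely needed. Total variation, contrary to your alternative, is only first order in $a_j$, since the score $\langle w,\nabla\psi(w)\rangle-\Delta\psi(w)$ is nonzero. Moreover $\sum_j|a_j|$ can diverge on $\mathcal A_s(B,a_0)$ when $s\le\tfrac12$, so a first-order bound would not suffice. Your Ramer-type fallback could work in principle, but you would need an $\mathcal H$-$C^1$ version of $\nabla_{\mathcal H}\phi_a$ and bijectivity, and you verify neither.

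There are two smaller omissions. First, for $\mathcal H$-convexity you treat $\phi_a(f+h)$ as the pointwise series $\sum_ja_j[\psi(z_{B_j}+h_{B_j})-\mu_\psi]$, but $\phi_a$ is only defined as an $L^2(\gamma)$ limit. You need a full-measure set of $f$ on which this series converges for every $h\in\mathcal H$. The paper obtains it in three steps:
\begin{itemize}
\item almost-sure convergence at $h=0$, since the summands are independent, centered, with summable variances;
\item the Taylor remainder bound $|a_j|\,|h_{B_j}|^2/2$;
\item Cauchy--Schwarz for the linear increments $\sum_ja_j\langle\nabla\psi(z_{B_j}),h_{B_j}\rangle$.
\end{itemize}
The convex partial sums plus $\tfrac12|h|_{\mathcal H}^2$ then converge pointwise on every slice, so the limit is convex.

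Second, Kakutani gives equivalence of the coordinate product measures on $\mathbb R^{\mathbb N}$. To transfer this to $\mathcal F$, you should note that both products are carried by $\{z:\sum_k\lambda_kz_k^2<\infty\}$, by the second-moment bound.
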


The sieve truncates at $m$ blocks, so $d_N=2m$ and $p_N=m$. Because $\phi_a$ is linear in
$a$, the regression is a linear model with a nonlinear vector feature. Writing
$D_{i,B_j}:=(D_{i,2j-1},D_{i,2j})$ for the observed displacement \eqref{eq:Dik} on block
$j$ and $\Psi_{ij}:=\nabla\psi(Z_{i,B_j})$,
\begin{equation}
\label{eq:block-regression}
  D_{i,B_j}=a_j\,\Psi_{ij}+\xi_{i,B_j},
  \qquad i\le N,\ j\le m .
\end{equation}
The contrast \eqref{eq:def-Rhat-circ} is separable across blocks and quadratic in $a$, so
its minimizer over $[-a_0,a_0]^m$ is available in closed form,
\begin{equation}
\label{eq:block-estimator}
  \widehat a_j=\Pi_{[-a_0,a_0]}\bigg(
  \frac{\sum_{i\le N}\langle\Psi_{ij},D_{i,B_j}\rangle}{\sum_{i\le N}\|\Psi_{ij}\|^2}\bigg),
  \qquad
  \widehat T_m:=T_{\widehat a,m} ,
\end{equation}
with $\Pi$ the projection onto the interval.

\begin{theorem}
\label{thm:block-upper}
Fix $\sigma>0$ and $0<a_0\le\tfrac12$, and let $\widehat T_m$ be the estimator \eqref{eq:block-estimator}.
There is an absolute constant $c>0$ such that, for every $N\ge1$ and $m\ge1$,
\[
  \sup_{a\in\mathcal A_s(B,a_0)}
  \mathbb E_a\big\|\widehat T_m-T_a\big\|_{L^2(\gamma;\mathcal H)}^2
  \ \le\ \frac{2\sigma^2m}{N}+4a_0^2\varsigma^2m\,e^{-cN}+\varsigma^2B^2m^{-2s} .
\]
For all sufficiently large $N$, choosing $m\asymp(B^2N/\sigma^2)^{1/(2s+1)}$ gives
\[
  \sup_{a\in\mathcal A_s(B,a_0)}
  \mathbb E_a\big\|\widehat T_m-T_a\big\|_{L^2(\gamma;\mathcal H)}
  \ \lesssim\ B^{1/(2s+1)}\Big(\frac{\sigma^2}{N}\Big)^{s/(2s+1)} .
\]
\end{theorem}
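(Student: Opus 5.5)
The plan is to split the risk into the retained blocks $j\le m$ and the discarded blocks $j>m$, and then use the block-orthogonality of the features under $\gamma$. Since $\widehat T_m - T_a=\sum_{j\le m}(\widehat a_j-a_j)\nabla\psi(\hat e_{B_j})-\sum_{j>m}a_j\nabla\psi(\hat e_{B_j})$, and the $\mathcal H$-components on different blocks are orthogonal, Proposition~\ref{prop:block-population} gives
\[
\|\widehat T_m-T_a\|^2_{L^2(\gamma;\mathcal H)}=\varsigma^2\sum_{j\le m}(\widehat a_j-a_j)^2+\varsigma^2\sum_{j>m}a_j^2 .
\]
Here $L^2(\gamma)$ is taken over a fresh input, with $\widehat a$ held fixed. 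The bias term is at most $\varsigma^2 m^{-2s}\sum_{j>m} j^{2s}a_j^2\le\varsigma^2B^2m^{-2s}$. This reduces the whole theorem to proving
\[
\varsigma^2\,\mathbb E(\widehat a_j-a_j)^2\le \frac{2\sigma^2}{N}+4a_0^2\varsigma^2e^{-cN}
\]
for each $j\le m$, and then summing over $j$.

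For a single block, write $S_j:=\sum_i\|\Psi_{ij}\|^2$ and $\tilde a_j:=a_j+\sum_i\langle\Psi_{ij},\xi_{i,B_j}\rangle/S_j$. By \eqref{eq:block-regression}, $\widehat a_j=\Pi(\tilde a_j)$. The projection onto $[-a_0,a_0]$ is nonexpansive and $a_j$ lies in that interval, so $|\widehat a_j-a_j|\le|\tilde a_j-a_j|$; the crude bound $|\widehat a_j-a_j|\le2a_0$ also always holds. Work on the good design event $E_j:=\{S_j\ge N\varsigma^2/2\}$. The noise is independent of the design, and the $\xi_{i,B_j}$ are i.i.d.\ $N(0,\sigma^2 I_2)$, so conditionally on the design
\[
\mathbb E\big[(\tilde a_j-a_j)^2\mid Z\big]=\frac{\sigma^2}{S_j}.
\]
On $E_j$ this is at most $2\sigma^2/(N\varsigma^2)$, and multiplying by $\varsigma^2$ gives the term $2\sigma^2/N$. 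On $E_j^c$ I use the crude bound, which contributes $4a_0^2\mathbb P(E_j^c)$.

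It remains to show $\mathbb P(E_j^c)\le e^{-cN}$. The variables $\|\Psi_{ij}\|^2$ are i.i.d.\ over $i$, take values in $[0,5/9]$, and have mean $\varsigma^2$, where $\varsigma^2$ is an absolute positive number by \eqref{eq:block-sigma2}. Hoeffding's inequality therefore gives
\[
\mathbb P\big(S_j<N\varsigma^2/2\big)\le\exp\!\big(-2N(\varsigma^2/2)^2/(5/9)^2\big)=e^{-cN}
\]
with an absolute $c$. Summing the per-block bound over $j\le m$ yields the first display of the theorem.

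For the rate, take $m\asymp(B^2N/\sigma^2)^{1/(2s+1)}$. This balances $\sigma^2m/N$ against $B^2m^{-2s}$, and both become of order $B^{2/(2s+1)}(\sigma^2/N)^{2s/(2s+1)}$. The term $m\,e^{-cN}$ is polynomial in $N$ times an exponential decay, so for $N$ large it is dominated by the main rate. Taking square roots and applying Jensen's inequality converts the squared-risk bound into the expected-norm bound.

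The main obstacle is the good-event step, specifically keeping the variance term at exactly $2\sigma^2/N$ without tracking $\varsigma^{-2}$ constants. This works because $\varsigma^2$ cancels between the orthogonal decomposition and the threshold $N\varsigma^2/2$, and because boundedness of $\nabla\psi$ makes the Hoeffding constant absolute. A secondary point to check is that the empirical contrast \eqref{eq:def-Rhat-circ} really reduces to the blockwise closed form \eqref{eq:block-estimator}. The contrast is separable across blocks and is a convex quadratic in each $a_j$, so the minimizer over the interval is the projection of the unconstrained minimizer.
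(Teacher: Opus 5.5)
Your proposal is correct and follows essentially the same route as the paper's proof: the exact block-orthogonal risk split, the bias bound $\varsigma^2B^2m^{-2s}$, the conditional-Gaussian variance $\sigma^2/S_j$ on the Hoeffding event $\{S_j\ge N\varsigma^2/2\}$ with the crude $2a_0$ bound off it, and Jensen after balancing $m$. The only difference is that you apply Hoeffding with the sharper range $[0,5/9]$, while the paper uses $[0,1]$ and takes $c=\varsigma^4/2$. Either choice gives an absolute constant, since $\psi$ is fixed.
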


\begin{theorem}
\label{thm:block-lower}
There are constants $c'=c'(s,B,\sigma,a_0)>0$ and $N_0$ such that, for all $N\ge N_0$,
\[
  \inf_{\widehat T}\ \sup_{a\in\mathcal A_s(B,a_0)}\
  \mathbb E_a\big\|\widehat T-T_a\big\|_{L^2(\gamma;\mathcal H)}
  \ \ge\ c'\Big(\frac{\sigma^2}{N}\Big)^{s/(2s+1)},
\]
the infimum over all estimators measurable in the paired sample.
\end{theorem}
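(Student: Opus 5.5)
We plan a Fano argument over a hypercube of block coefficients, parallel to Theorem~\ref{thm:minimax} but inside the nonlinear family $\mathcal A_s(B,a_0)$. Fix a resolution $m$, let $J=\{m+1,\dots,2m\}$, and for $\omega\in\{0,1\}^m$ set $a^\omega_j:=\tau\,\omega_{j-m}$ for $j\in J$ and $a^\omega_j:=0$ otherwise. By Varshamov--Gilbert we extract $\Omega\subseteq\{0,1\}^m$ with $\log|\Omega|\ge m/8$ and pairwise Hamming distance at least $m/8$. Membership in $\mathcal A_s(B,a_0)$ requires $\tau\le a_0$ and $\sum_{j\in J}j^{2s}\tau^2\le(2m)^{2s}m\tau^2\le B^2$. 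Both hold for $\tau:=c_0Bm^{-s-1/2}$ with $c_0\le 2^{-s}$ small, once $m$ is large enough that $\tau\le a_0$; this largeness is where $N_0$ enters.

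For separation we use \eqref{eq:block-coef} with linearity in $a$. Since $T_{a}-T_{a'}=\nabla_{\mathcal H}\phi_{a-a'}$ and the blocks involve disjoint independent coordinates under $\gamma$, we obtain $\|T_{a^\omega}-T_{a^{\omega'}}\|^2_{L^2(\gamma;\mathcal H)}=\varsigma^2\sum_j(a^\omega_j-a^{\omega'}_j)^2\ge\varsigma^2\tau^2m/8$. Only the block-diagonal structure is used here: block $j$ occupies output coordinates $B_j$ only, so cross terms vanish by $\mathcal H$-orthogonality and no independence argument is even needed.

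For the information bound, the law of one paired observation consists of $f_0\sim\gamma$ together with the conditional law of $f_1$ given $f_0$. That conditional law is Gaussian $N(T(f_0),\sigma^2C_\gamma)$, and two such laws differ by a Cameron--Martin shift $h=T_a(f_0)-T_{a'}(f_0)\in\mathcal H$. The Cameron--Martin formula for $N(0,\sigma^2C_\gamma)$, whose CM norm is $\sigma^{-1}|\cdot|_{\mathcal H}$, gives KL $=|h|_{\mathcal H}^2/(2\sigma^2)$. Integrating over $f_0$ and tensorizing yields
\[
\mathrm{KL}\big(P_{a^\omega}^{\otimes N}\,\big\|\,P_{a^{\omega'}}^{\otimes N}\big)=\frac{N\varsigma^2}{2\sigma^2}\sum_j(a^\omega_j-a^{\omega'}_j)^2\le\frac{N\varsigma^2\tau^2m}{2\sigma^2}.
\]

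Fano's inequality with a $\delta$-packing then gives a lower bound of order $\varsigma\tau\sqrt m$ on the minimax expected norm, provided $N\varsigma^2\tau^2m/(2\sigma^2)\le(\log|\Omega|)/4$, i.e.\ $N\tau^2\lesssim\sigma^2/\varsigma^2$. Substituting $\tau^2=c_0^2B^2m^{-2s-1}$, we choose $m\asymp(B^2N/\sigma^2)^{1/(2s+1)}$ with a sufficiently small constant. The separation is then $\tau\sqrt m\asymp Bm^{-s}\asymp B^{1/(2s+1)}(\sigma^2/N)^{s/(2s+1)}$, and the reduction from Fano's probability bound to expected norm is the standard Markov step. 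The main obstacle is bookkeeping rather than conceptual. We must confirm that the conditional-KL computation is legitimate in infinite dimensions, which needs $h\in\mathcal H$ a.s.; this holds because $T_a-I$ is $\mathcal H$-valued by Proposition~\ref{prop:block-population}. We must also track that the constraint $\tau\le a_0$ and the integrality of $m$ are compatible for $N\ge N_0$. Admissibility of each hypothesis as an optimal transport map is already supplied by Proposition~\ref{prop:block-population}.
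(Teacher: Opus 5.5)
Your proposal is correct and is essentially the paper's proof. It uses the same band-of-blocks hypercube with Varshamov--Gilbert pruning, the same separation $\varsigma^2\tau^2 d_H(\omega,\omega')$ and divergence $N\varsigma^2\tau^2 d_H(\omega,\omega')/(2\sigma^2)$, and the same Fano-plus-Markov conversion, with $\tau\le a_0$ secured by taking $N$ large. The only differences are that you derive the divergence from the infinite-dimensional Cameron--Martin formula rather than the blockwise Gaussian shift with identical laws off the band, and that you fix $\tau$ by the norm constraint rather than by the divergence.

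One bookkeeping slip: with $\tau=c_0Bm^{-s-1/2}$ the divergence is $N\varsigma^2c_0^2B^2m^{-2s}/(2\sigma^2)$, while $\log|\Omega|\ge (m/8)\log 2$. The Fano condition is therefore a lower bound on $m$, so you need $m$ to be a sufficiently large multiple of $(B^2N/\sigma^2)^{1/(2s+1)}$, or equivalently $c_0$ small for a fixed multiple, not a small one.
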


Both theorems are proved in Section~\ref{app:block-proofs}. They give matching rates
on the same class in expected norm. For any nonzero coefficient sequence, the target
has a non-Gaussian block; each nonzero block also has dependent coordinates, as verified
in the population proof.

\begin{rem}
\label{rem:block-scope}
The class has fixed-size independent blocks and one scalar parameter per block.
Truncation preserves all input dependence within each retained block, so the retained
map is represented exactly. Dependence on discarded inputs requires additional
approximation control, as discussed in Section~\ref{sec:discussion}.
\end{rem}

\subsection{Continuous two-groups model}
\label{subsec:bayesian-example}

We consider a non-Gaussian two-groups model in which coordinate $k$ is drawn from the Gaussian reference with probability $1-\delta_k$ and from a heavier-tailed alternative with probability $\delta_k$. Such null/alternative mixtures are standard in empirical-Bayes multiple testing \citep{efron2008microarrays} and sparse modeling \citep{georgemcculloch1993}. We assume
\[
\delta_k=c_kk^{-2\rho},\qquad \rho>\tfrac12,\qquad c_k\in[c_-,c_+]\subset(0,\tfrac13],
\]
where $\rho,c_-,c_+$ are fixed and the sequence $(c_k)$ is unknown. The transport map is therefore unknown. The bounds below use only the fixed class limits $c_-,c_+$ and the decay exponent $\rho$. The sieve ultimately estimates $\delta_k$, the coordinatewise prior probability of a non-null signal.

Let $\nu$ be the standard Laplace measure, $d\nu(y)=\tfrac12e^{-|y|}dy$, with CDF
$F_\nu$, a distribution with heavier tails than the $N(0,1)$ coordinates of $\gamma$, in
the same spirit as classical two-groups models with a diffuse, heavy-tailed slab
\citep{georgemcculloch1993}. For $\theta\in[0,1]$ define the mixture CDF and its
monotone rearrangement against $\Phi$,
\[
F_\theta:=(1-\theta)\Phi+\theta F_\nu,
\qquad
M_\theta:=F_\theta^{-1}\circ\Phi,
\]
so that, if $X\sim N(0,1)$, then $M_\theta(X)\sim(1-\theta)N(0,1)+\theta\,\nu$ by the
probability integral transform. Both $\Phi$ and $F_\nu$ are symmetric about $0$, so
$F_\theta$ is symmetric and $M_\theta$ is odd, with $M_\theta(0)=0$; at $\theta=0$,
$F_0=\Phi$ and $M_0=\mathrm{id}$, and at $\theta=1$, $M_1=F_\nu^{-1}\circ\Phi$ is exactly
the Gaussian-to-Laplace rearrangement used to construct a heavy-tailed departure from
Gaussianity.

Define the potential by
\[
\phi(f):=\sum_{k\ge1}\bar\Xi_{\delta_k}\big(\hat e_k(f)\big),
\qquad
\Xi_\theta(x):=\int_0^x\big(M_\theta(t)-t\big)\,dt,
\qquad
\bar\Xi_\theta:=\Xi_\theta-\mathbb E_\Phi[\Xi_\theta],
\]
the centering constant $\mathbb E_\Phi[\Xi_\theta]:=\mathbb E_{X\sim N(0,1)}[\Xi_\theta(X)]$
finite because Lemma~\ref{lem:mtheta-growth} in Section~\ref{app:two-groups-lemmas} bounds $|M_\theta(x)-x|$ by $C_1(1+x^2+\log(1/\theta))$, so that $\Xi_\theta$ has at most cubic growth, which is integrable against a Gaussian tail. The same bound gives $M_\theta-\mathrm{id}$
at most quadratic growth for each fixed $\theta$, so the cylindrical potential built
from it meets the polynomial-growth requirement of Definition~\ref{def:cylindrical}.

Proposition~\ref{prop:two-groups-population} verifies that $\phi\in\mathbb D^{1,2}(\gamma)$ and that the pushforward $\pi_1:=T_\#\gamma=\bigotimes_k\pi_{1,k}$, with $\pi_{1,k}:=(1-\delta_k)N(0,1)+\delta_k\nu$, satisfies $\pi_1\sim\gamma$. Both follow from $\sum_k\delta_k<\infty$ when $\rho>\tfrac12$. The convex coordinate potential is $x^2/2+\Xi_\theta(x)$, whose derivative $M_\theta$ is nondecreasing. The chain rule \eqref{eq:chain} then gives
\[
T(f)=f+\nabla_{\mathcal H}\phi(f)=\sum_{k\ge1}M_{\delta_k}\big(\hat e_k(f)\big)\,e_k,
\qquad
u_k(f)=M_{\delta_k}\big(\hat e_k(f)\big)-\hat e_k(f).
\]
Since $M_{\delta_k}$ is the monotone, hence $W_2$-optimal, coupling of $N(0,1)$ to
$\pi_{1,k}$, the mixture bound of Lemma~\ref{lem:mixture-w2} applies with $P=N(0,1)$,
$Q=\nu$, and $\theta=\delta_k$, and gives the exact, nonasymptotic bound
\begin{equation}
\label{eq:two-groups-bias}
\|u_k\|_{L^2(\pi_0)}^2=W_2\big(N(0,1),(1-\delta_k)N(0,1)+\delta_k\nu\big)^2
\ \le\ \delta_k\,W_0^2,
\qquad
W_0^2:=W_2\big(N(0,1),\nu\big)^2<\infty,
\end{equation}

By \eqref{eq:two-groups-bias},
$\|T\|_{\mathcal W^s}^2\le c_+W_0^2\sum_kk^{2s-2\rho}$, so
$T\in\mathcal W^s$ for every $s<\rho-\tfrac12$. At the boundary
$s=\rho-\tfrac12$, \eqref{eq:two-groups-bias} gives only an upper bound; see
Remark~\ref{rem:supremal}. For the rate calculation we use the tail bound
\begin{equation}
\label{eq:two-groups-tail}
\|T_d-T\|^2_{L^2(\pi_0;\mathcal H)}
\le c_+W_0^2\sum_{k>d}k^{-2\rho}
\lesssim d^{-2(\rho-1/2)}.
\end{equation}
Thus $\rho$ determines the approximation rate in this class.

For the retained coordinates use
\[
g_\theta(z)=\sum_{k\le d_N}\Xi_{\theta_k}(z_k),\qquad
T_{\theta,N}(f)=f+\sum_{k\le d_N}(M_{\theta_k}(\hat e_k(f))-\hat e_k(f))e_k.
\]
The retained truth is represented at $\theta_k=\delta_k$. Set
$\ell(\eta)=e^\eta/(1+e^\eta)$, $\eta_k=\operatorname{logit}(\theta_k)$ and
$\beta_k=\eta_k+2\rho\log k$. Then
\begin{equation}
\label{eq:beta-star-bound}
\beta_k^*=\log c_k-\log(1-c_kk^{-2\rho})
\in[\log c_-,\log(c_+/(1-c_+))].
\end{equation}
Choose the deterministic class-dependent box
\begin{equation}
\label{eq:def-BN}
B_-:=\log c_- -1,\quad B_+:=\log\frac{c_+}{1-c_+}+1,\quad
\mathcal B_N=[B_-,B_+]^{d_N},\quad
\Theta_N=\{\eta:\beta(\eta)\in\mathcal B_N\}.
\end{equation}
Every true centered coordinate has margin at least $1$ in this box. Writing
$r_0=B_+-B_-$, we have $|\eta_k-\eta_k^*|\le r_0$ for every candidate and coordinate.
The box uses the class bounds, not the individual unknown $c_k$.

By the chain rule, the sieve's Jacobian entry in the new coordinate is the score
multiplier
\[
q_\eta(x):=\ell'(\eta)\,\partial_\theta M_\theta(x)\big|_{\theta=\ell(\eta)}
=\theta(1-\theta)\,\partial_\theta M_\theta(x),
\qquad
\partial_\eta\big[M_{\ell(\eta)}(x)\big]=q_\eta(x).
\]
Write $\widetilde R_N(\eta):=R_N(\ell(\eta))$ and
$\widehat{\widetilde R}_N^\circ(\eta):=\widehat R_N^\circ(\ell(\eta))$, with $\ell$
applied coordinatewise, for the population risk and the finite empirical contrast read
in the log-odds coordinate.

\begin{proposition}
\label{prop:log-odds-props}
Let $L_0$ be the constant of Lemma~\ref{lem:theta-sensitivity} in
Section~\ref{app:two-groups-lemmas}. The score multiplier is uniformly bounded,
\[
  \sup_{\eta\in\mathbb R}\ \sup_{x\in\mathbb R}\ |q_\eta(x)|\ \le\ L_0 .
\]
Consequently $\eta\mapsto M_{\ell(\eta)}(x)$ is $L_0$-Lipschitz uniformly in $x$, and
for every $\eta,\eta'$,
\begin{equation}
\label{eq:logodds-sup-lip}
  \sup_{x\in\mathbb R}\big|M_{\ell(\eta)}(x)-M_{\ell(\eta')}(x)\big|
  \ \le\ L_0\,|\eta-\eta'| .
\end{equation}
\end{proposition}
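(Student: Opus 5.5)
The plan is to compute $q_\eta$ explicitly through the implicit function theorem and then bound it uniformly. Write $\theta=\ell(\eta)$ and $y=M_\theta(x)$. By definition, $y$ is the unique solution of $F_\theta(y)=\Phi(x)$. Since $F_\theta$ has density $p_\theta:=(1-\theta)\varphi+\theta p_\nu>0$, differentiating the identity $(1-\theta)\Phi(y)+\theta F_\nu(y)=\Phi(x)$ in $\theta$ at fixed $x$ gives
\[
\partial_\theta M_\theta(x)=\frac{\Phi(y)-F_\nu(y)}{p_\theta(y)},
\qquad
q_\eta(x)=\frac{\theta(1-\theta)\,\big(\Phi(y)-F_\nu(y)\big)}{(1-\theta)\varphi(y)+\theta p_\nu(y)} .
\]
Here $\varphi$ and $p_\nu$ denote the normal and Laplace densities. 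As $x$ ranges over $\mathbb R$, $y=M_\theta(x)$ ranges over all of $\mathbb R$, so it suffices to bound this expression uniformly in $\theta\in(0,1)$ and $y\in\mathbb R$. This is presumably the content of the paper's Lemma~\ref{lem:theta-sensitivity}, and I would either invoke it directly or reprove it as follows.

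By symmetry, $\Phi(y)-F_\nu(y)$ is odd, so it suffices to take $y\ge0$. There $|\Phi(y)-F_\nu(y)|=|\bar\Phi(y)-\tfrac12e^{-y}|\le\bar\Phi(y)+\tfrac12e^{-y}$. I would split this numerator into its two pieces and pair each with the matching piece of the denominator.

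For the Laplace piece, drop the Gaussian part of the denominator:
\[
\frac{\theta(1-\theta)\tfrac12e^{-y}}{\theta\tfrac12e^{-y}}=1-\theta\le1 .
\]

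For the Gaussian piece, drop the Laplace part and use Mills' ratio:
\[
\frac{\theta(1-\theta)\,\bar\Phi(y)}{(1-\theta)\varphi(y)}=\theta\,\frac{\bar\Phi(y)}{\varphi(y)}\le\theta\min\{1/y,\sqrt{\pi/2}\}\le\sqrt{\pi/2} .
\]
The bound $\bar\Phi/\varphi\le\sqrt{\pi/2}$ holds because the Mills ratio is decreasing with value $\sqrt{\pi/2}$ at $0$.

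Because the denominator dominates each of its parts, the sum of the two pieces is bounded by their separate bounds. This gives $|q_\eta(x)|\le1+\sqrt{\pi/2}=:L_0$, uniformly in $\eta$ and $x$. The boundary cases $\theta\to0$ and $\theta\to1$ are harmless because of the factor $\theta(1-\theta)$.

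The Lipschitz statement then follows from the fundamental theorem of calculus in $\eta$. For fixed $x$, the map $\eta\mapsto M_{\ell(\eta)}(x)$ is $C^1$ by the implicit function theorem, with derivative $q_\eta(x)$. Hence
\[
|M_{\ell(\eta)}(x)-M_{\ell(\eta')}(x)|\le\int_{\eta'}^{\eta}|q_u(x)|\,du\le L_0|\eta-\eta'| ,
\]
and taking the supremum over $x$ gives \eqref{eq:logodds-sup-lip}.

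The main obstacle I expect is justifying the differentiation rigorously. This needs $F_\theta$ to be $C^1$ jointly in $(\theta,y)$ with $p_\theta>0$, which holds since both densities are continuous and positive. A secondary point is making sure the bound does not degenerate in the regime $\theta\to0$ with $y$ large, where the Gaussian density is exponentially smaller than the Laplace tail. The splitting above is designed precisely so that each numerator tail is controlled by its own density, which avoids comparing $\varphi$ with $p_\nu$.
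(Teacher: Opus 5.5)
Your argument is correct, but it re-derives the bound from scratch rather than following the paper. The paper's proof is one line. Lemma~\ref{lem:theta-sensitivity} gives $|\partial_\theta M_\theta(x)|\le L_0/\theta$, hence $|q_\eta(x)|=\theta(1-\theta)|\partial_\theta M_\theta(x)|\le(1-\theta)L_0\le L_0$, and \eqref{eq:logodds-sup-lip} follows by integrating $\partial_\eta M_{\ell(\eta)}=q_\eta$, exactly as in your last step.

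The lemma keeps the numerator $\tfrac12e^{-t}-\bar\Phi(t)$ intact (here $t=M_\theta(x)$ is your $y$). It compares this numerator only with the Laplace part of the density, $f_\theta(t)\ge\tfrac\theta2e^{-t}$. This works because $(\tfrac12e^{-t}-\bar\Phi(t))e^t$ is bounded, the Gaussian tail being lighter than the Laplace tail. The factor $1/\theta$ this costs is cancelled exactly by $\ell'(\eta)=\theta(1-\theta)$. So the regime $\theta\to0$ with $t$ large, which worried you, needs no separate treatment.

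Your split applies the triangle inequality to the numerator and Mills' ratio against the Gaussian part of the density. This relies on cancelling $1-\theta$, which is harmless since $\ell(\eta)\in(0,1)$. It also gives an explicit constant without having to bound $\bar\Phi(t)e^t$. The price is that you discard the cancellation in $\tfrac12e^{-t}-\bar\Phi(t)$ and end up with a larger constant. The lemma's proof sets $L_0=2L_1$ with $L_1=\sup_{t\ge0}|\tfrac12-\bar\Phi(t)e^t|=\tfrac12$, because $0<\bar\Phi(t)e^t<0.52$ and $\bar\Phi(t)e^t\to0$. So $L_0=1$, whereas even your sharper form $(1-\theta)+\theta\sqrt{\pi/2}$ exceeds $1$.

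Your direct version therefore proves the proposition with $L_0$ replaced by $1+\sqrt{\pi/2}$. That is all the later envelope $K=L_0r_0$ needs. To get the inequality with the lemma's constant, as the statement is written, take your first option and invoke the lemma's $1/\theta$ bound.
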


\begin{proof}
$|q_\eta(x)|=\theta(1-\theta)|\partial_\theta M_\theta(x)|\le(1-\theta)L_0\le L_0$
by Lemma~\ref{lem:theta-sensitivity}, and \eqref{eq:logodds-sup-lip} follows by
integrating $\partial_\eta M_{\ell(\eta)}=q_\eta$.
\end{proof}

The bound \eqref{eq:logodds-sup-lip} is uniform in $x$ and its constant is independent of $d_N$ and the coordinate index. Together with the fixed coordinatewise radius of $\Theta_N$, it gives the envelope
\begin{equation}
\label{eq:g-envelope}
  \sup_{\eta\in\Theta_N}\ \max_{k\le d_N}\ \sup_{x\in\mathbb R}
  \big|M_{\ell(\eta_k)}(x)-M_{\delta_k}(x)\big|\ \le\ L_0r_0\ =:\ K .
\end{equation}
Although $M_\theta$ is unbounded, the difference between a candidate map and the truth is uniformly bounded over the parameter space and coordinates. This gives a direct prediction-risk analysis.

\subsubsection{Direct bound on the prediction risk}
\label{subsec:logodds-direct}

We estimate the map by minimizing the contrast \eqref{eq:def-Rhat-circ} over a
discretization of $\Theta_N$. Fix a mesh width $h>0$, write
$a_k=B_--2\rho\log k$ and $b_k=B_+-2\rho\log k$, and define
\begin{equation}
\label{eq:def-grid}
\mathcal G_k=\{a_k+jh: j=0,\ldots,\lfloor(b_k-a_k)/h\rfloor\}\cup\{b_k\},
\qquad \mathcal G_N=\prod_{k\le d_N}\mathcal G_k,
\end{equation}
so that $|\mathcal G_k|\le r_0/h+2$ for every $k$, and set
\begin{equation}
\label{eq:def-grid-erm}
  \widehat\eta_N\in\arg\min_{\eta\in\mathcal G_N}\widehat{\widetilde R}_N^\circ(\eta),
  \qquad
  \widehat T_N:=T_{\ell(\widehat\eta_N),N}.
\end{equation}
Since $\widehat{\widetilde R}_N^\circ$ is separable across coordinates, \eqref{eq:def-grid-erm} is computed coordinate by coordinate by evaluating $|\mathcal G_k|$ scalar criteria. The grid is fixed before observing the data.

The resulting risk bound is stated below. Write
\begin{equation}
\label{eq:def-Drho}
  \mathcal D_\rho(c_-,c_+):=\big\{(\delta_k)_{k\ge1}:\ \delta_k=c_kk^{-2\rho},\
  c_k\in[c_-,c_+]\big\}
\end{equation}
for the model class, with $\rho>\tfrac12$ and $[c_-,c_+]\subset(0,\tfrac13]$ fixed, and
$T_\delta$ for the transport map with inclusion probabilities $\delta$.

\begin{theorem}
\label{thm:logodds-fast}
Fix $\sigma>0$, $\rho>1/2$ and $0<c_-<c_+\le1/3$. For $0<h\le1$, let
$\widehat T_N$ be the grid estimator \eqref{eq:def-grid-erm} on the box
\eqref{eq:def-BN}. There is $C$, depending only on $\sigma,r_0,L_0$, such that
for $N\ge2$ and every integer $d_N\ge1$,
\begin{equation}
\label{eq:logodds-fast-risk}
\sup_{\delta\in\mathcal D_\rho(c_-,c_+)}
\mathbb E_\delta\|\widehat T_N-T_\delta\|_{L^2(\gamma;\mathcal H)}^2
\le C d_N\left[h^2+\frac{1+\log(2+r_0/h)}{N}\right]
 +\frac{c_+W_0^2}{2\rho-1}d_N^{-(2\rho-1)}.
\end{equation}
In particular, for fixed $A\ge1$ and $C_0>0$, any mesh satisfying
\[N^{-A}\le h\le\min\{1,C_0\sqrt{\log N/N}\}\]
gives an estimation term
$C' d_N\log N/N$, where $C'$ may also depend on $A,C_0$.
Choosing $d_N\asymp(N/\log N)^{1/(2\rho)}$ gives
\begin{equation}
\label{eq:logodds-fast-rate}
\sup_{\delta\in\mathcal D_\rho(c_-,c_+)}
\mathbb E_\delta\|\widehat T_N-T_\delta\|_{L^2(\gamma;\mathcal H)}
\lesssim(N/\log N)^{-(\rho-1/2)/(2\rho)}.
\end{equation}
\end{theorem}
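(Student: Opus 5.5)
The plan is to use coordinatewise separability and decompose the risk into a retained part and a truncation part. Since $\pi_0=\gamma$ and both $\widehat T_N$ and $T_\delta$ act coordinatewise,
\[
\|\widehat T_N-T_\delta\|^2_{L^2(\gamma;\mathcal H)}=\sum_{k\le d_N}\mathbb E_X\big[(M_{\ell(\widehat\eta_k)}(X)-M_{\delta_k}(X))^2\big]+\sum_{k>d_N}\|u_k\|^2_{L^2(\gamma)} .
\]
The tail is handled by \eqref{eq:two-groups-bias}, giving $c_+W_0^2\sum_{k>d_N}k^{-2\rho}\le \frac{c_+W_0^2}{2\rho-1}d_N^{-(2\rho-1)}$ through the integral comparison. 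For the retained part, the contrast \eqref{eq:def-Rhat-circ} separates across $k$, so $\widehat\eta_k$ minimizes over $\mathcal G_k$ the scalar least-squares criterion $\frac1N\sum_i\{(M_{\ell(\eta)}(Z_{ik})-Z_{ik})^2-2D_{ik}(M_{\ell(\eta)}(Z_{ik})-Z_{ik})\}$, where $D_{ik}=M_{\delta_k}(Z_{ik})-Z_{ik}+\xi_{ik}$. It therefore suffices to show, uniformly in $k$ and in $\delta_k$,
\[
\mathbb E\,\|M_{\ell(\widehat\eta_k)}-M_{\delta_k}\|^2_{L^2(\Phi)}\le C\Big[h^2+\frac{1+\log(2+r_0/h)}{N}\Big].
\]
Summing over $k\le d_N$ then gives \eqref{eq:logodds-fast-risk}.

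For the scalar problem I will set up finite-class least squares with bounded differences. Write $g_\eta:=M_{\ell(\eta)}-M_{\delta_k}$. By \eqref{eq:g-envelope}, $|g_\eta|\le K$ uniformly. The true $\eta_k^*$ lies in $[a_k+1,b_k-1]$ by \eqref{eq:beta-star-bound}, so there is a grid point $\bar\eta$ with $|\bar\eta-\eta_k^*|\le h$, and Proposition~\ref{prop:log-odds-props} gives $\|g_{\bar\eta}\|_\infty\le L_0h$. The empirical criterion difference against $\bar\eta$ is
\[
\widehat{\mathcal E}(\eta)=P_N(g_\eta^2-g_{\bar\eta}^2)-2P_N\big(\xi(g_\eta-g_{\bar\eta})\big).
\]
The basic inequality gives $\widehat{\mathcal E}(\widehat\eta_k)\le0$. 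The excess loss $\ell_\eta:=g_\eta^2-g_{\bar\eta}^2-2\xi(g_\eta-g_{\bar\eta})$ has mean $\|g_\eta\|^2-\|g_{\bar\eta}\|^2$. It also satisfies a Bernstein condition: its variance is at most $C(K^2+\sigma^2)\|g_\eta-g_{\bar\eta}\|^2\le C'(\|g_\eta\|^2+L_0^2h^2)$, and since $\xi$ is Gaussian and $g$ is bounded, its moments are of Bernstein/sub-exponential type. Bernstein's inequality for sub-exponential variables plus a union bound over $|\mathcal G_k|\le r_0/h+2$ grid points then gives, with probability $1-e^{-u}$ and simultaneously over $\eta$,
\[
\|g_\eta\|^2-\|g_{\bar\eta}\|^2\le \widehat{\mathcal E}(\eta)+C\sqrt{(\|g_\eta\|^2+h^2)\frac{\log|\mathcal G_k|+u}{N}}+C\frac{\log|\mathcal G_k|+u}{N}.
\]
Evaluating at $\widehat\eta_k$ and applying $\sqrt{ab}\le a/2+b/2$ yields $\|g_{\widehat\eta_k}\|^2\lesssim h^2+(\log|\mathcal G_k|+u)/N$. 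Integrating the tail in $u$ gives the expectation bound, using $\|g\|^2\le K^2$ on the exceptional event. The constant depends on $\sigma$, $K=L_0r_0$ and $L_0$, as claimed.

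The main obstacle is getting the fast $1/N$ rate rather than $1/\sqrt N$. This depends on a correctly localized Bernstein step, with the variance of the excess loss controlled by its mean up to the $h^2$ approximation slack, together with the unbounded Gaussian noise. I would first handle the noise by conditioning on the design: given the $Z_{ik}$, the term $P_N(\xi(g_\eta-g_{\bar\eta}))$ is exactly Gaussian with variance $\sigma^2P_N(g_\eta-g_{\bar\eta})^2/N$. The design part, which is bounded by $2K^2$, is then treated separately with the ordinary Bernstein inequality, and empirical norms are related to population norms through a bounded-class ratio inequality over the finite grid.

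The final claims are then direct. For $N^{-A}\le h\le C_0\sqrt{\log N/N}$ we have $h^2\le C_0^2\log N/N$ and $\log(2+r_0/h)\lesssim A\log N$, so the estimation term is at most $C'd_N\log N/N$. Balancing this against $d_N^{-(2\rho-1)}$ gives $d_N\asymp(N/\log N)^{1/(2\rho)}$. The squared risk is then $(N/\log N)^{-(2\rho-1)/(2\rho)}$, and Jensen's inequality yields \eqref{eq:logodds-fast-rate}.
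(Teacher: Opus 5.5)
Your proposal is correct and follows essentially the same route as the paper. Both use the exact coordinate split with the tail summed through the mixture bound, and a per-coordinate Bernstein-type bound whose variance proxy is controlled by the excess risk. Both then take a union bound over the at most $r_0/h+2$ grid points, compare the grid minimizer with the grid point within $h$ of $\eta_k^*$, and integrate tails and sum over $k\le d_N$ without any union over coordinates.

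The differences are cosmetic. The paper centers the loss at the truth, $L_\eta=g_\eta^2-2\xi g_\eta$, so its variance proxy is exactly proportional to $\|g_\eta\|_{L^2(\Phi)}^2$, and it obtains the Bernstein condition from an exact moment-generating-function computation conditional on $Z$. You center at the nearest grid point and pay an extra $h^2$ in the variance, which your $\sqrt{ab}\le a/2+b/2$ step absorbs.
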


For coordinate $k$, the excess loss is
\begin{equation}
\label{eq:excess-loss}
L_\eta(Z,\xi)=g_\eta(Z)^2-2\xi g_\eta(Z),\qquad
 g_\eta=M_{\ell(\eta)}-M_{\delta_k}.
\end{equation}
The envelope \eqref{eq:g-envelope} yields
\begin{equation}
\label{eq:var-mean}
\mathbb E L_\eta=\|g_\eta\|_2^2,\qquad
\operatorname{Var}(L_\eta)\le(K^2+4\sigma^2)\|g_\eta\|_2^2.
\end{equation}
Section~\ref{app:proof-logodds-fast} proves the corresponding exponential bound and applies it to each coordinate grid. Writing $s_{\rm tail}=\rho-1/2$, the resulting exponent is $s_{\rm tail}/(2s_{\rm tail}+1)$ and follows from the tail bound \eqref{eq:two-groups-tail}.

\begin{rem}
\label{rem:grid-vs-continuous}
The theorem concerns a global minimizer over the specified deterministic grid.
It also covers a grid-valued estimator $\widetilde\eta\in\mathcal G_N$ whose normalized
empirical contrast is within a deterministic $\varepsilon_N\ge0$ of the grid minimum,
with an additional $C\varepsilon_N$ in the risk bound; see the proof.
\end{rem}

\begin{rem}
\label{rem:no-lower-bound-nongaussian}
The bound is uniform over $\mathcal D_\rho(c_-,c_+)$. The diagonal lower bound of
Theorem~\ref{thm:minimax} concerns a different class, so the exponent here is an attained
upper bound.
\end{rem}

\section{Numerical study}
\label{sec:numerical-illustration}
\label{subsec:num-rate}

We evaluate the grid estimator at $\rho=0.75,1,1.5$, with $\sigma=0.3$ and
$c_k=1/3$. The class bounds are $c_-=1/4$ and $c_+=1/3$.
We take $d_N$ to be the nearest integer to $(N/\log N)^{1/(2\rho)}$, with minimum $2$,
and use $h_N=\sqrt{\log N/N}$ in centered log-odds coordinates. Both endpoints of each
coordinate interval are included, and every grid value is compared.
The experiment comprises $535$ replicates over $17$ settings, with $N$ ranging from $250$
to $4000$ for $\rho=0.75$, and from $500$ to $16000$ for $\rho=1,1.5$.

Population risks are evaluated through \eqref{eq:exact-split}. Quantile evaluation uses
symmetry and log-survival probabilities. Retained-coordinate risks use two quadrature
orders, with adaptive integration when their discrepancy exceeds the specified tolerance.
The coordinate tail is bounded using geometric integer blocks and an analytic infinite
remainder; the displayed bias is the midpoint of the numerically evaluated block bounds.
The algorithm and error accounting are described in Section~\ref{sup:numerical-integration},
and Table~\ref{tab:grid-details} lists the computed values setting by setting.
Figure~\ref{fig:grid-rates} shows the result.

\begin{figure}[H]
\centering
\includegraphics[width=\textwidth]{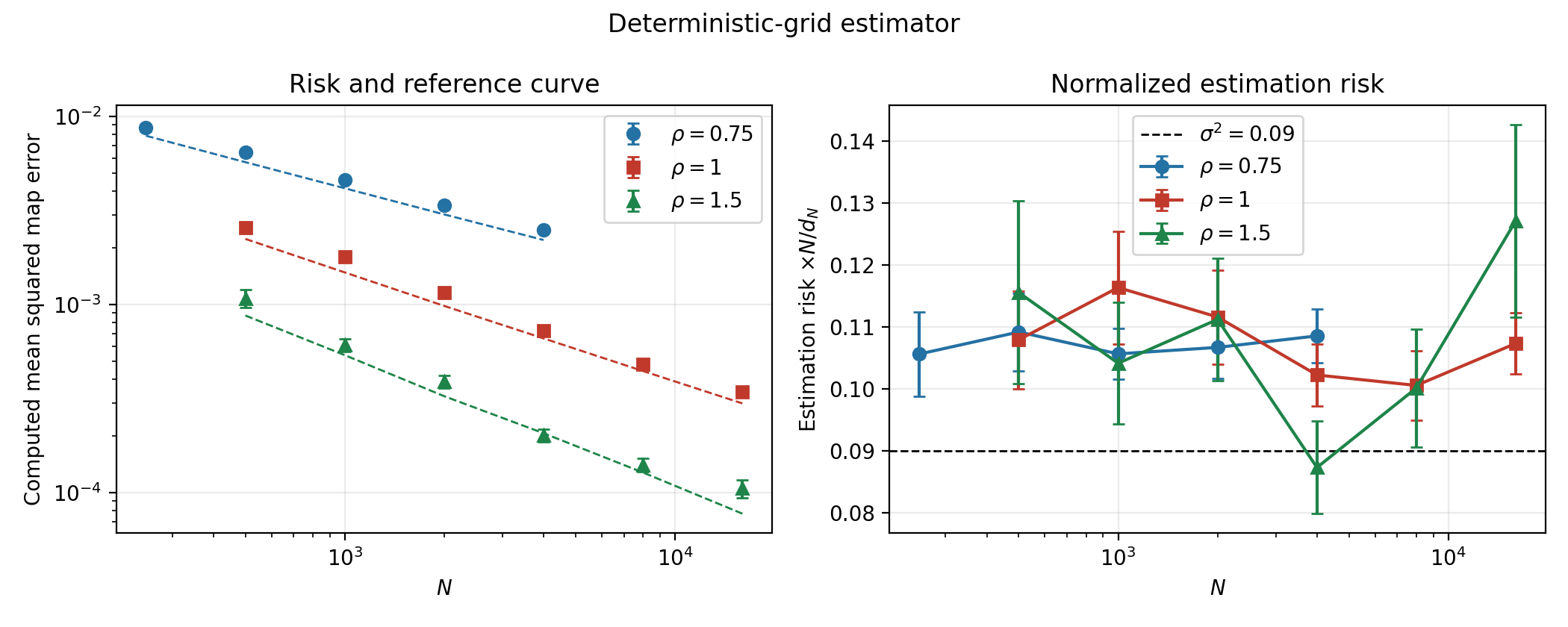}
\caption{Deterministic-grid estimator. Left: computed mean squared map error and
the reference $\sigma^2d_N/N$ plus the computed bias (dashed). Right: normalized
estimation risk and the reference level $\sigma^2=0.09$. Error bars show one Monte
Carlo standard error; numerical integration and tail errors are recorded separately.}
\label{fig:grid-rates}
\end{figure}

Observed and reference log-risk curves are fitted using identical weighted least squares
weights, the inverse delta-method variance of the observed log risk. A bootstrap
resamples replicates within each $N$, holding the original weights and computed biases
fixed. Table~\ref{tab:grid-slopes} reports the finite-range slopes and the 95\% bootstrap
interval for their difference, based on $10{,}000$ resamples.

\begin{table}[H]
\centering
\begin{tabular}{cccc}
\hline
$\rho$ & Observed slope & Reference slope & Difference: 95\% interval\\
\hline
$0.75$ & $-0.4539$ & $-0.4595$ & $[-0.0233,0.0350]$\\
$1.0$ & $-0.5908$ & $-0.5795$ & $[-0.0422,0.0205]$\\
$1.5$ & $-0.6983$ & $-0.6941$ & $[-0.0697,0.0608]$\\
\hline
\end{tabular}
\caption{Finite-range slope comparison for the grid estimator.}
\label{tab:grid-slopes}
\end{table}

The fitted slopes follow the reference slopes across $\rho$, and all three intervals for their difference contain zero. The reference curve uses $\sigma^2d_N/N$ plus the computed bias, while the theoretical upper bound has an additional logarithmic factor. Endpoint frequencies and the full pointwise results are reported in the Supplementary Material.

The largest numerically evaluated tail-interval half-width is 0.52\% of
the computed total risk, and the largest change between the two tail quadrature orders
is 0.007\% of that risk. These diagnostics are separate from the Monte Carlo
error bars.


\section{Discussion}
\label{sec:discussion}

Paired observations turn transport-map estimation into a regression problem with Cameron--Martin loss. The transport structure links the regression function to the source and target distributions, while the infinite-dimensional setting introduces an additional approximation problem. In particular, decay of the output coordinates of $T-I$ does not by itself control the dependence of retained outputs on discarded input coordinates. Section~\ref{sec:input} gives two ways to handle this issue. For general Sobolev potentials, weighted derivative regularity together with a conditional Gaussian Poincar\'e inequality controls the discarded-input dependence. For bounded-chaos potentials, the degree restriction provides the required control.

The explicit model classes show how these conditions lead to concrete rates. The diagonal Gaussian class gives a case in which the transport coefficients and approximation error can be calculated directly. The nonlinear block-interaction class shows that the rate $N^{-s/(2s+1)}$ also holds beyond diagonal linear maps, with a matching lower bound. For bounded-chaos potentials with interaction order $q$, the Hermite sieve gives the rate $N^{-s/(q+2s)}$, which is minimax in the additive case. The Gaussian--Laplace mixture gives a non-Gaussian example with a direct prediction-risk bound.

Several questions remain open. Matching lower bounds are not available for $\mathcal C_{q,m}^{s}$ when $q\ge2$ or for the growing-degree classes $\mathcal K_q^{s,t,\varpi}$. It is also unclear whether least-squares estimation can reduce the degree dependence that appears in the variance of the projection estimator. For the mixture class, sharper two-sided bounds on $W_2^2(\mu_\theta,\mu_{\theta'})$ would help determine whether the logarithmic factor in the current upper bound is necessary.

The paired-observation framework may also be extended in several directions. One is adaptive selection of the sieve dimension, chaos degree, and interaction order. Another is stochastic transport, which allows transport mechanisms beyond deterministic Monge maps \citep{nietert2025estimation}. These extensions would broaden the use of the framework for transport problems on function spaces, including those arising in Bayesian inverse problems and related infinite-dimensional models.

\medskip
\bibliographystyle{plainnat}
\bibliography{reference}

\clearpage
\begin{center}
{\Large\bfseries Supplementary Material}\\[0.5ex]
{\large for ``Sieve Estimation of Optimal Transport Maps from Paired Data in Gaussian Spaces''}
\end{center}
\medskip

\renewcommand{\thesection}{S\arabic{section}}
\setcounter{section}{0}
\renewcommand{\theHsection}{sup.\arabic{section}}
\renewcommand{\theequation}{S\arabic{equation}}
\setcounter{equation}{0}
\renewcommand{\theHequation}{sup.\arabic{equation}}

This supplement gives the verification arguments, proofs, additional calculations, and numerical details for the main manuscript. Results, equations, and sections from the main manuscript are referenced without the prefix ``S''; labels with ``S'' refer to this supplement.

\section{Paired-noise contrast}
\label{sup:foundations}

This section shows that the finite contrast \eqref{eq:def-Rhat-circ} is an unbiased estimate of the excess-risk difference.

Write $u_k(f)=\langle T(f)-f,e_k\rangle_{\mathcal H}$ as in Section~\ref{subsec:regularity} and $b_{\theta,k}(z)=\partial_kg_\theta(z)$, so that the $k$th Cameron--Martin coordinate of the sieve displacement at $f_0^{(i)}$ is $b_{\theta,k}(Z_i)$. Since $e_k=\sqrt{\lambda_k}\varphi_k$, the observed displacement coordinate \eqref{eq:Dik} decomposes, under Assumption~\ref{ass:data}, as
\begin{equation}
\label{eq:D-decomp}
  D_{ik}=\big\langle T(f_0^{(i)})-f_0^{(i)},e_k\big\rangle_{\mathcal H}+\xi^{(i)}_k
  =u_k(f_0^{(i)})+\xi^{(i)}_k ,
\end{equation}
with $\xi^{(i)}_k$ the i.i.d.\ $N(0,\sigma^2)$ noise coordinates of
\eqref{eq:noise-coords}, independent of the design. Both terms are ordinary real random
variables. Taking expectations in \eqref{eq:def-Rhat-circ} term by term, the cross terms
in $\xi^{(i)}_k$ vanish and, for every $\theta,\theta_0\in\Theta_N$,
\begin{align*}
\mathbb E\big[\widehat R_N^\circ(\theta)\big]
&=\sum_{k=1}^{d_N}\mathbb E_{\pi_0}\Big[b_{\theta,k}(Z)^2-b_{\theta_0,k}(Z)^2
-2u_k\big(b_{\theta,k}(Z)-b_{\theta_0,k}(Z)\big)\Big]\\
&=\sum_{k=1}^{d_N}\mathbb E_{\pi_0}\big[(b_{\theta,k}(Z)-u_k)^2\big]
-\sum_{k=1}^{d_N}\mathbb E_{\pi_0}\big[(b_{\theta_0,k}(Z)-u_k)^2\big]
=R_N(\theta)-R_N(\theta_0),
\end{align*}
the last equality because the tail coordinates $k>d_N$ contribute
$\sum_{k>d_N}\|u_k\|_{L^2(\pi_0)}^2$ to both $R_N(\theta)$ and $R_N(\theta_0)$ and cancel.
So $\theta_N^*$ minimizes $\mathbb E[\widehat R_N^\circ]$ over $\Theta_N$, and every
empirical gradient and Hessian below is a derivative of the finite function
\eqref{eq:def-Rhat-circ}.

\section{Verification of the general sieve conditions}
\label{sup:verification}

This section gives conditions on the cylindrical sieve that imply Assumptions~\ref{ass:invertible}--\ref{ass:localization}, together with the corresponding proofs.

\subsection{Limit on uniform cylindrical approximation}
\label{app:expressivity}

The following transport family lies in a fixed $\mathcal W^s$ ball but has no uniform $d^{-s}$ cylindrical approximation.

Take $\pi_0=\gamma$, let $m=d+1$, put $n:=m^{s/2}\ge1$ and $0<a<1/4$, and set
$\phi_m(z)=an^{-2}\sin(nz_1)\sin(z_m)$. The Hessian of $\phi_m$ has operator norm at most
$2a<\tfrac12$, so $I+\nabla^2\phi_m\succ0$ and $T_m:=I+\nabla_{\mathcal H}\phi_m$ is a
monotone gradient map altering two coordinates. Its nonzero displacement fields are
$u_1=an^{-1}\cos(nz_1)\sin(z_m)$ and $u_m=an^{-2}\sin(nz_1)\cos(z_m)$, whence
$\|T_m\|_{\mathcal W^s}^2\le a^2n^{-2}+a^2m^{2s}n^{-4}\le2a^2$, so the whole family lies in
one fixed $\mathcal W^s$ ball. Yet any predictor using only the first $d=m-1$ coordinates
incurs, for $u_1$, at least its conditional variance,
\[
\inf_{v}\mathbb E\big|u_1-v(Z_1,\dots,Z_d)\big|^2
=\frac{a^2}{n^2}\,\mathbb E\big[\cos^2(nZ_1)\big]\,\mathbb E\big[\sin^2(Z_m)\big]
\ \ge\ c\,a^2m^{-s},
\]
so the root error is at least $cm^{-s/2}$, not $O(m^{-s})$. Membership in
$\mathcal W^s$ therefore does not yield a uniform $d^{-s}$ cylindrical approximation
across the class.

\subsection{Primitive verification conditions}
\label{app:verify}

Assumptions~\ref{ass:invertible}--\ref{ass:localization} are stated at the level of the
abstract parametric risk $R_N$, with $\mu_N$, $L_N$, and the rate $O_p(\sqrt{p_N/N})$
left unconnected to the concrete sieve family $\{g_\theta\}$ of
Definition~\ref{def:cylindrical}. We give primitive conditions directly on
$(g_\theta,\pi_0)$ that verify Assumptions~\ref{ass:invertible}--\ref{ass:grad-conc}.
Assumptions~\ref{ass:invertible} and~\ref{ass:lipschitz-map} follow from the envelope
conditions (B1)--(B3) in their $\pi_0$-a.s.\ form.
Assumption~\ref{ass:grad-conc} also requires the uniform empirical-process condition (B4)
below: (B1)--(B3) bound the empirical and population averages of
$\partial_\theta S(\theta,f_0)$ separately, but leave uncontrolled the rate at which their
difference concentrates, which (B4) supplies.

Recall $T_{\theta,N}(f)=f+\sum_{k=1}^{d_N}\partial_kg_\theta(\Phi_N(f))e_k$, and write
$J_\theta(z):=\partial_\theta\nabla_zg_\theta(z)\in\mathbb R^{d_N\times p_N}$ for the
$\theta$-Jacobian of the gradient map and $r_k(\theta,f):=\partial_kg_\theta(\Phi_N(f))-u_k(f)$
for the coordinatewise residual against the truth, with $u_k$ as in
Section~\ref{subsec:regularity}. Since $u_k$ does not depend on $\theta$,
$\partial_\theta r_k=\partial_\theta\partial_kg_\theta$, and, splitting off the tail
coordinates $k>d_N$, on which every sieve map agrees with the identity,
\[
R_N(\theta)=\sum_{k=1}^{d_N}\mathbb E_{\pi_0}\big[r_k(\theta,f_0)^2\big]
+\|T-T^{(d_N)}\|_{L^2(\pi_0;\mathcal H)}^2 .
\]
Write $S(\theta,f):=2\sum_kr_k(\theta,f)J_\theta(\Phi_N(f))_{k,\cdot}$, so that
$\nabla R_N(\theta)=\mathbb E_{\pi_0}[S(\theta,f_0)]$ and $\nabla\widehat R_N^\circ(\theta)
=\widehat S_N(\theta):=\tfrac1N\sum_iS(\theta,f_0^{(i)})$ away from the noise term
isolated in the proof of Proposition~\ref{prop:verify}(c). Differentiating,
\[
\partial_\theta S(\theta,f)=2J_\theta(\Phi_N(f))^\top J_\theta(\Phi_N(f))
+2\sum_{k=1}^{d_N}r_k(\theta,f)\,\partial_\theta^2\partial_kg_\theta(\Phi_N(f)),
\]
an empirical-process quantity that the pointwise envelopes (B1)--(B3) do not control
uniformly on their own.

\begin{ass}
\label{ass:sieve-primitive}
There is a Euclidean ball $\mathcal N_N=\{\theta:\|\theta-\theta_N^*\|\le r_N\}\subset\Theta_N$
and a set $\mathcal E_N\subset\mathcal F$ with $N\,\pi_0(\mathcal F\setminus\mathcal E_N)\to0$
such that:
\begin{enumerate}
\item[(B1)] for every $f\in\mathcal E_N$, $\theta\mapsto\nabla_zg_\theta(\Phi_N(f))$ is
twice continuously differentiable on $\mathcal N_N$, with
$\sup_{\theta\in\mathcal N_N}\|\partial_\theta^2\partial_kg_\theta(\Phi_N(f))\|_{\rm op}\le M_N'$
for every $k=1,\ldots,d_N$;
\item[(B2)] the Gram matrix $G_N(\theta):=\mathbb E_{\pi_0}[J_\theta(\Phi_N(f_0))^\top J_\theta(\Phi_N(f_0))]$
satisfies $v^\top G_N(\theta)v\ge\underline\mu_N\|v\|^2$ for all $v\in\mathbb R^{p_N}$,
$\theta\in\mathcal N_N$, and
\[
\sup_{\theta\in\mathcal N_N}\ \sup_{\|v\|=1}\ \Big|\mathbb E_{\pi_0}\Big[\sum_{k=1}^{d_N}
r_k(\theta,f_0)\,v^\top\partial_\theta^2\partial_kg_\theta(\Phi_N(f_0))\,v\Big]\Big|
\ \le\ \tfrac12\underline\mu_N
\]
the left side is $0$ whenever $g_\theta$ is affine in $\theta$, as
for the quadratic-potential sieve of Section~\ref{subsec:gaussian}, where
$\partial_\theta^2\partial_kg_\theta\equiv0$;
\item[(B3)] for every $f\in\mathcal E_N$, $\sup_{\theta\in\mathcal N_N}\|J_\theta(\Phi_N(f))\|_{\rm op}\le M_N$;
\item[(B4)] there is $b_N(\cdot)$ such that, for every $\delta\in(0,1)$, with
probability $\ge1-\delta$,
\[
\sup_{\theta\in\mathcal N_N}\Big\|\big(\mathbb P_N-\mathbb P\big)\big[\partial_\theta
S(\theta,f_0)\big]\Big\|_{\rm op}\ \le\ b_N(\delta),
\]
a uniform empirical-process bound on the stochastic Hessian fluctuation. It is separate from the pointwise envelopes (B1)--(B3), which do not by themselves imply an $N^{-1/2}$ concentration rate. Each worked example below either verifies the required concentration directly or replaces this condition by model-specific structure.
\end{enumerate}
\end{ass}

When $J_\theta$ and its curvature are uniformly bounded in $f$, we may take $\mathcal E_N=\mathcal F$, as in the log-odds parametrization of Section~\ref{subsec:bayesian-example}. For unbounded Jacobians, such as the diagonal quadratic potential of Section~\ref{subsec:gaussian}, we use a subset $\mathcal E_N\subset\mathcal F$. A union bound gives
\begin{equation}
\label{eq:union-bound-event}
\Pr\big[f_0^{(i)}\in\mathcal E_N\ \text{for every }i\le N\big]
\ \ge\ 1-N\,\pi_0(\mathcal F\setminus\mathcal E_N)\ \to\ 1.
\end{equation}
The samplewise verification is carried out on the event~\eqref{eq:union-bound-event}, while population moment bounds use the $\pi_0$-a.s. version of (B3).

\begin{proposition}
\label{prop:verify}
Suppose Assumption~\ref{ass:sieve-primitive} holds with (B1) and (B3) in their
$\pi_0$-a.s.\ form, and let $R_N(\theta_N^*)\le C_a^2d_N^{-2s}$ as in
Theorem~\ref{thm:approx}. Then:
\begin{enumerate}
\item[(a)] Assumption~\ref{ass:invertible} holds, and the quadratic growth
\eqref{eq:quad-growth} holds with $\mu_N=\underline\mu_N$.
\item[(b)] Assumption~\ref{ass:lipschitz-map} holds with $L_N=M_N$.
\item[(c)] If, in addition,
\begin{align*}
r_NM_N'd_N&=O\!\left(M_N\sqrt{p_N/N}\right),\\
r_Nb_N(\delta)&=O\!\left(M_N\sqrt{p_N/(N\delta)}\right),
\end{align*}
then Assumption~\ref{ass:grad-conc} holds with $\zeta_N=0$. The empirical-process condition
(B4) supplies the second display; the envelopes (B1)--(B3) alone do not imply the
required uniform $N^{-1/2}$ concentration.
\end{enumerate}
\end{proposition}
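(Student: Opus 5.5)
We prove the three parts in order, using the decomposition $R_N(\theta)=\sum_{k\le d_N}\mathbb E_{\pi_0}[r_k(\theta,f_0)^2]+\|T-T^{(d_N)}\|^2$, whose second term does not depend on $\theta$.

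\textbf{Part (a).} Differentiating under the integral gives
\[
H_{R_N}(\theta)=\mathbb E_{\pi_0}[\partial_\theta S(\theta,f_0)]
=2G_N(\theta)+2\,\mathbb E_{\pi_0}\Big[\sum_k r_k\,\partial_\theta^2\partial_kg_\theta\Big].
\]
Dominated convergence justifies the differentiation: by (B1) and (B3) in a.s.\ form the integrands are bounded by $M_N^2+M_N'\sum_k|r_k|$. The factor $\sum_k|r_k|$ is integrable because $r_k\in L^2(\pi_0)$, which follows from $\nabla g_\theta(\Phi_N)\in L^2(\pi_0)$ and $u_k\in L^2(\pi_0)$. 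The same domination gives continuity of $H_{R_N}$ on $\mathcal N_N$, so $R_N$ is $C^2$ there.

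By (B2), $v^\top H_{R_N}(\theta)v\ge2\underline\mu_N-\underline\mu_N=\underline\mu_N$ for unit $v$ and all $\theta\in\mathcal N_N$. Hence $H_{R_N}(\theta_N^*)$ is invertible. Interiority of $\theta_N^*$ is inherited from $\mathcal N_N\subset\Theta_N$ being a ball around it, together with Remark~\ref{rem:parameter-space}. A Taylor expansion with $\nabla R_N(\theta_N^*)=0$ and integral remainder then yields \eqref{eq:quad-growth} with $\mu_N=\underline\mu_N$. The constant may even be $2\underline\mu_N-\underline\mu_N$; we keep $\underline\mu_N$.

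\textbf{Part (b).} For $\theta,\theta'\in\mathcal N_N$, convexity of the ball and the mean value theorem give, pointwise in $f$,
\[
\|\nabla g_\theta(\Phi_N(f))-\nabla g_{\theta'}(\Phi_N(f))\|_2\le \sup_{\mathcal N_N}\|J\|_{\rm op}\,\|\theta-\theta'\|\le M_N\|\theta-\theta'\|
\]
by (B3). Squaring, integrating against $\pi_0$, and using $\mathcal H$-orthonormality of $\{e_k\}$ gives $L_N=M_N$.

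\textbf{Part (c).} This is the main obstacle. Write the empirical gradient as $\widehat S_N(\theta)$ plus the noise term $-\frac2N\sum_i\sum_k\xi_{ik}J_\theta(Z_i)_{k,\cdot}$, which comes from $D_{ik}=u_k+\xi_{ik}$ in \eqref{eq:D-decomp}. Then split
\[
\nabla\widehat R_N^\circ(\theta)-\nabla R_N(\theta)=(\mathbb P_N-\mathbb P)S(\theta_N^*,\cdot)+\big[(\mathbb P_N-\mathbb P)(S(\theta,\cdot)-S(\theta_N^*,\cdot))\big]+\text{noise}(\theta).
\]
The pieces are handled as follows.
\begin{itemize}
\item \emph{Centered term at $\theta_N^*$.} Chebyshev applied to $\|\cdot\|^2$ bounds it by $\sqrt{\mathbb E\|S(\theta_N^*,f_0)\|^2/(N\delta)}$. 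Since $\|S\|\le2M_N(\sum_k r_k^2)^{1/2}$, we get $\mathbb E\|S\|^2\le4M_N^2R_N(\theta_N^*)$. This is $O(M_N^2)$, and even $O(M_N^2p_N)$ suffices.
\item \emph{Increment term.} By the mean value theorem it is at most $r_N\sup_{\mathcal N_N}\|(\mathbb P_N-\mathbb P)\partial_\theta S\|_{\rm op}\le r_Nb_N(\delta)$ by (B4). The second displayed hypothesis makes this $O(M_N\sqrt{p_N/(N\delta)})$.
\item \emph{Noise term.} At $\theta_N^*$, conditionally on the design it is Gaussian with covariance $\frac{4\sigma^2}{N^2}\sum_iJ^\top J$. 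Its trace is at most $4\sigma^2p_NM_N^2/N$, so Chebyshev gives $2\sigma M_N\sqrt{p_N/(N\delta)}$.
\item \emph{Noise increment.} Its variation over $\mathcal N_N$ involves $\partial_\theta J$, that is, (B1). A crude bound $r_NM_N'\sum_k|\xi_{ik}|$ averaged gives order $r_NM_N'd_N\sigma$. This is where the first displayed hypothesis $r_NM_N'd_N=O(M_N\sqrt{p_N/N})$ enters.
\end{itemize}
Splitting $\delta$ among the pieces and collecting constants into $C_g(1+\sigma)$ gives Assumption~\ref{ass:grad-conc}. Because the envelopes hold a.s., no design event is needed and $\zeta_N=0$.

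The delicate point is the noise increment. The crude $d_N$-fold bound is not uniform in probability without care, so I would bound $\frac1N\sum_i\sum_k|\xi_{ik}|$ by its mean $\sigma d_N\sqrt{2/\pi}$ plus a Chebyshev deviation, and absorb the deviation into the $\delta$-dependent term.
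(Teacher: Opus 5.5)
Your proposal is correct and follows the paper's proof essentially step by step. Part (a) uses the Hessian split $H_{R_N}=2G_N+2C_N$ together with (B2); part (b) uses the segment/Jacobian bound via (B3) to get $L_N=M_N$; and part (c) uses the same four-term decomposition: the design and noise terms at $\theta_N^*$ via second-moment Markov bounds, the design increment via (B4), and the noise increment via (B1) and the $\ell^1$ norm of the noise coordinates, with $\zeta_N=0$ from the a.s.\ envelopes. Your dominated-convergence justification for differentiating under $\pi_0$ is a useful addition that the paper omits. The dominating function should be uniform over $\mathcal N_N$; this follows from $\|r(\theta,f)\|_2\le\|r(\theta_N^*,f)\|_2+M_Nr_N$ by (B3).
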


Section~\ref{app:proof-verify} proves parts (a)--(c). The $\pi_0$-a.s. versions of (B1) and (B3) are used for the population Hessian and moment bounds. Assumption~\ref{ass:localization} is verified separately for the Gaussian example. The two-groups example uses the direct prediction-risk bound and does not invoke Assumption~\ref{ass:localization}.

\subsection{Proof of Proposition~\ref{prop:verify}} \label{app:proof-verify}

\begin{proof}
In this proof, (B3) is used in its a.s.\ form, $\mathcal E_N=\mathcal F$ up to a
$\pi_0$-null set. Hence the population integrals in parts (b) and (c) are defined on all
of $\mathcal F$ and $\zeta_N=0$. Part (c) controls four random terms with probability at
least $1-\delta/4$ each; a union bound gives probability at least $1-\delta$.

For (a), differentiating $R_N(\theta)=\sum_k\mathbb E_{\pi_0}[r_k(\theta,f_0)^2]+\mathrm{const}$
twice in $\theta$ gives $H_{R_N}(\theta)=2G_N(\theta)+2C_N(\theta)$, where
$C_N(\theta)_{jl}:=\mathbb E_{\pi_0}\big[\sum_kr_k(\theta,f_0)\,\partial_{\theta_j}\partial_{\theta_l}\partial_kg_\theta(\Phi_N(f_0))\big]$.
For unit $v$, (B2) gives $v^\top H_{R_N}(\theta)v\ge2\underline\mu_N-2\cdot\tfrac12\underline\mu_N=\underline\mu_N$,
so $H_{R_N}(\theta)\succeq\underline\mu_N I$ throughout $\mathcal N_N$, which is exactly the
local strong convexity underlying \eqref{eq:quad-growth} with $\mu_N=\underline\mu_N$, and in particular
$H_{R_N}(\theta_N^*)$ is invertible, giving Assumption~\ref{ass:invertible}. Interiority of
$\theta_N^*$ in $\Theta_N$ is as noted in Remark~\ref{rem:parameter-space}.

For (b), by the fundamental theorem of calculus along the segment from $\theta'$ to
$\theta$ and Jensen's inequality, $\partial_kg_\theta(z)-\partial_kg_{\theta'}(z)
=\int_0^1J_{\theta_t}(z)_{k,\cdot}(\theta-\theta')\,dt$, so, for $\pi_0$-a.e.\ $f$ (writing $z=\Phi_N(f)$),
\[
\sum_{k=1}^{d_N}\big(\partial_kg_\theta(z)-\partial_kg_{\theta'}(z)\big)^2
\le\int_0^1\big\|J_{\theta_t}(z)(\theta-\theta')\big\|_2^2\,dt
\le M_N^2\|\theta-\theta'\|^2
\]
by (B3). Taking $\pi_0$-expectation and square roots gives
$\|T_{\theta,N}-T_{\theta',N}\|_{L^2(\pi_0;\mathcal H)}\le M_N\|\theta-\theta'\|$, i.e.\
Assumption~\ref{ass:lipschitz-map} with $L_N=M_N$.

For (c), write $\nabla\widehat R_N^\circ(\theta)-\nabla R_N(\theta)=A_N(\theta)-B_N(\theta)$,
where
\[
B_N(\theta):=\frac2N\sum_{i=1}^NJ_\theta(\Phi_N(f_0^{(i)}))^\top\xi_{1:d_N}^{(i)},
\qquad
\xi_{1:d_N}^{(i)}:=\big(\xi_1^{(i)},\ldots,\xi_{d_N}^{(i)}\big),
\]
collects the noise coordinates \eqref{eq:noise-coords}, and $A_N(\theta):=\widehat S_N(\theta)-\mathbb E_{\pi_0}[S(\theta,f_0)]$
is the centered empirical average of $S(\theta,f):=2\sum_kr_k(\theta,f)J_\theta(\Phi_N(f))_{k,\cdot}$.

Fix $\delta\in(0,1)$.

For $B_N$ at the oracle: conditionally on the design, $B_N(\theta_N^*)\sim N(0,\Sigma_N)$
with $\mathrm{tr}\,\Sigma_N=\tfrac{4\sigma^2}{N^2}\sum_i\|J_{\theta_N^*}(\Phi_N(f_0^{(i)}))\|_F^2
\le\tfrac{4\sigma^2M_N^2p_N}N$ by (B3), using $\|J\|_F^2\le p_N\|J\|_{\rm op}^2$, so
$\mathbb E\|B_N(\theta_N^*)\|^2\le4\sigma^2M_N^2p_N/N$. By Markov's inequality applied to
the squared norm, with probability $\ge1-\delta/4$,
\begin{equation}
\label{eq:bn-oracle-delta}
\|B_N(\theta_N^*)\|\ \le\ 2\sigma M_N\sqrt{\tfrac{4p_N}{N\delta}}.
\end{equation}
Because $B_N(\theta_N^*)$ is conditionally Gaussian, Gaussian concentration would
sharpen the $\delta$-dependence here to $\sqrt{\log(1/\delta)}$. We keep the cruder
bound~\eqref{eq:bn-oracle-delta}, since the next step is not obviously sub-Gaussian and a
single uniformly achievable $\delta$-rate is preferable for combining the two.

For $B_N$ across $\mathcal N_N$: by (B1), $\partial_\theta B_N(\theta)_{jl}
=\tfrac2N\sum_i\sum_k\xi_k^{(i)}\partial_{\theta_j}\partial_{\theta_l}\partial_kg_\theta(\Phi_N(f_0^{(i)}))$,
so $\|\partial_\theta B_N(\theta)\|_{\rm op}\le\tfrac{2M_N'}N\sum_i\|\xi_{1:d_N}^{(i)}\|_1$
uniformly in $\theta$. Since $\|\xi_{1:d_N}^{(i)}\|_1$ is a sum of $d_N$ i.i.d.\
$|N(0,\sigma^2)|$ terms, $\mathbb E\|\xi_{1:d_N}^{(i)}\|_1=d_N\sigma\sqrt{2/\pi}$ and
$\mathrm{Var}(\|\xi_{1:d_N}^{(i)}\|_1)=d_N\sigma^2(1-2/\pi)$, so
$\mathbb E\big[(\tfrac1N\sum_i\|\xi_{1:d_N}^{(i)}\|_1)^2\big]
=(d_N\sigma\sqrt{2/\pi})^2+\tfrac{d_N\sigma^2(1-2/\pi)}N\le C_1^2\sigma^2d_N^2$ for an
absolute constant $C_1$ since $N\ge1$. By Markov's inequality, with probability
$\ge1-\delta/4$,
\[
\tfrac1N\sum_i\|\xi_{1:d_N}^{(i)}\|_1\ \le\ C_1\sigma d_N\sqrt{4/\delta}.
\]
By the mean value theorem, on this event,
\begin{equation}
\label{eq:bn-uniform-delta}
\sup_{\theta\in\mathcal N_N}\|B_N(\theta)-B_N(\theta_N^*)\|\ \le\ r_N\cdot2M_N'C_1\sigma d_N\sqrt{4/\delta}.
\end{equation}
Under $r_NM_N'd_N=O(M_N\sqrt{p_N/N})$, the right side of~\eqref{eq:bn-uniform-delta} is
$O(\sigma M_N\sqrt{p_N/(N\delta)})$, matching the order of~\eqref{eq:bn-oracle-delta}, so
on the intersection of the two events
\[
\sup_{\theta\in\mathcal N_N}\|B_N(\theta)\|\ \le\ C_2\,\sigma M_N\sqrt{\tfrac{p_N}{N\delta}}
\]
for an absolute constant $C_2$.

For $A_N$ at the oracle: writing $S(\theta,f)=2J_\theta(\Phi_N(f))^\top r(\theta,f)$ with
$r(\theta,f)=(r_1,\dots,r_{d_N})^\top$, (B3) gives
$\|S(\theta_N^*,f)\|\le2M_N\|r(\theta_N^*,f)\|$ pointwise, so
\[
\mathbb E\|S(\theta_N^*,f_0)\|^2\le4M_N^2\sum_{k=1}^{d_N}\mathbb E_{\pi_0}\big[r_k(\theta_N^*,f_0)^2\big]
\le4M_N^2R_N(\theta_N^*)\le4C_a^2M_N^2d_N^{-2s},
\]
using Theorem~\ref{thm:approx}. Since $A_N(\theta_N^*)$ is a centered average of $N$
i.i.d.\ copies of $S(\theta_N^*,f_0)$,
$\mathbb E\|A_N(\theta_N^*)\|^2\le4C_a^2M_N^2d_N^{-2s}/N$, and by Markov's inequality,
with probability $\ge1-\delta/4$,
\[
\|A_N(\theta_N^*)\|\ \le\ 2C_aM_Nd_N^{-s}\sqrt{\tfrac{4}{N\delta}}
\ \le\ 2C_aM_N\sqrt{\tfrac{4p_N}{N\delta}},
\]
the last step because $d_N^{-s}\le1\le\sqrt{p_N}$ for $s>0$ and $p_N\ge1$. Taking the operator norm of $J^\top$ directly removes an extra factor $d_N^{1/2}$
from a coordinatewise Cauchy--Schwarz bound.

For $A_N$ on $\mathcal N_N$, condition (B4) is also used.
Differentiating, $\partial_\theta S(\theta,f)=2J_\theta(\Phi_N(f))^\top J_\theta(\Phi_N(f))
+2\sum_kr_k(\theta,f)\partial_\theta^2\partial_kg_\theta(\Phi_N(f))$: (B1) and (B3) give a
pointwise envelope on this quantity for each fixed $f\in\mathcal E_N$, and hence
bound $\widehat S_N'(\theta):=\tfrac1N\sum_i\partial_\theta S(\theta,f_0^{(i)})$ and
$\mathbb E_{\pi_0}[\partial_\theta S(\theta,f_0)]$ each on their own. Conditions (B1) and (B3) give pointwise envelopes but do not give the $N^{-1/2}$ concentration rate for $\partial_\theta A_N(\theta)=(\mathbb P_N-\mathbb P)[\partial_\theta S(\theta,f_0)]$. Condition (B4) supplies this bound. With probability $\ge1-\delta/4$,
\[
\sup_{\theta\in\mathcal N_N}\|\partial_\theta A_N(\theta)\|_{\rm op}\ \le\ b_N(\delta/4),
\]
so, by the mean value theorem, on this event,
\[
\sup_{\theta\in\mathcal N_N}\|A_N(\theta)-A_N(\theta_N^*)\|\ \le\ r_Nb_N(\delta/4).
\]
Combining with the oracle bound,
\[
\sup_{\theta\in\mathcal N_N}\|A_N(\theta)\|\ \le\
2C_aM_N\sqrt{\tfrac{4p_N}{N\delta}}\ +\ r_Nb_N(\delta/4),
\]
whose first term already has the target order and whose second does so under
$r_Nb_N(\delta)=O\big(M_N\sqrt{p_N/(N\delta)}\big)$, the second condition of the
proposition. On the intersection of all four events,
\[
\sup_{\theta\in\mathcal N_N}\big\|\nabla\widehat R_N^\circ(\theta)-\nabla R_N(\theta)\big\|
\ \le\ C_g\,(1+\sigma)M_N\sqrt{\tfrac{p_N}{N\delta}}
\]
with probability at least $1-\delta$, where $C_g$ depends only on $C_a$, on the constant
$C_1$ above, and on the implicit constants in the two $O(\cdot)$ conditions of the
statement. The constant $C_g$ is independent of $N$, $\delta$, $d_N$, and $p_N$. The $B_N$ terms are proportional to $\sigma$, while the $A_N$ terms do not contain $\sigma$, giving the factor $1+\sigma$. Part (b) gives $M_N=L_N$, so Assumption~\ref{ass:grad-conc} holds with $\zeta_N=0$.

\end{proof}

\section{Proofs for the input-dependence results}
\label{app:two-index}

\subsection{Proof of Theorem~\ref{thm:two-index-approx}}

\begin{proof}
Write $u_k=D_k\phi$ and $\mathcal F_d=\sigma(\hat e_1,\dots,\hat e_d)$. We first record
the commutation
\begin{equation}
\label{eq:commute}
  D_kg_d=\mathbb E\big[u_k\mid\mathcal F_d\big],\qquad k\le d,
  \qquad\text{and}\qquad D_kg_d=0,\quad k>d .
\end{equation}
We prove \eqref{eq:commute} in the chaos basis. Write
$\phi=\sum_\alpha c_\alpha H_\alpha$, convergent in $L^2(\gamma)$. Conditional expectation
given $\mathcal F_d$ is the orthogonal projection of $L^2(\gamma)$ onto the closed span of
$\{H_\alpha:\operatorname{supp}\alpha\subseteq[d]\}$, so
$g_d=\sum_{\operatorname{supp}\alpha\subseteq[d]}c_\alpha H_\alpha$. Since
$\partial_kH_\alpha=\sqrt{\alpha_k}H_{\alpha-e_k}$ and
$\phi\in\mathbb D^{1,2}(\gamma)$ gives $\sum_\alpha c_\alpha^2\alpha_k<\infty$, both
\[
  D_kg_d=\sum_{\operatorname{supp}\alpha\subseteq[d]}c_\alpha\sqrt{\alpha_k}H_{\alpha-e_k},
  \qquad
  \mathbb E[u_k\mid\mathcal F_d]
  =\sum_{\operatorname{supp}(\alpha-e_k)\subseteq[d]}c_\alpha\sqrt{\alpha_k}H_{\alpha-e_k}
\]
converge in $L^2(\gamma)$. For $k\le d$ and $\alpha_k\ge1$ the two index sets coincide,
because deleting one unit from a coordinate $k\le d$ does not change whether $\alpha$ has
mass beyond $d$; this gives the first part of \eqref{eq:commute}. The second part holds
because $g_d$ does not depend on $\hat e_k$ for $k>d$. Since $\{e_k\}$ is
orthonormal in $\mathcal H$, the displacement of $T-T_d$ splits exactly:
\begin{equation}
\label{eq:two-index-split}
  \|T-T_d\|_{L^2(\gamma;\mathcal H)}^2
  =\sum_{k\le d}\big\|u_k-\mathbb E[u_k\mid\mathcal F_d]\big\|_{L^2(\gamma)}^2
  +\sum_{k>d}\|u_k\|_{L^2(\gamma)}^2 .
\end{equation}
The second sum is at most $d^{-2s}\sum_{k>d}k^{2s}\|u_k\|^2\le B_1^2d^{-2s}$.

For the first sum, fix $k\le d$ and condition on $\mathcal F_d$. At fixed
$z_{[d]}:=(\hat e_1,\dots,\hat e_d)$ the map
$z_{>d}\mapsto u_k(z_{[d]},z_{>d})$ is a function of a standard Gaussian vector, and its
Cameron--Martin gradient in the free coordinates is $(D_ju_k)_{j>d}$. The Gaussian
Poincar\'e inequality, with its sharp constant $1$, gives
\[
  \operatorname{Var}\big(u_k\mid\mathcal F_d\big)
  \ \le\ \mathbb E\Big[\sum_{j>d}(D_ju_k)^2\ \Big|\ \mathcal F_d\Big]
  \qquad\gamma\text{-a.s.}
\]
Taking expectations and using
$\|u_k-\mathbb E[u_k\mid\mathcal F_d]\|_{L^2(\gamma)}^2=\mathbb E[\operatorname{Var}(u_k\mid\mathcal F_d)]$,
then summing over $k\le d$ and enlarging the range of $k$,
\[
  \sum_{k\le d}\big\|u_k-\mathbb E[u_k\mid\mathcal F_d]\big\|_{L^2(\gamma)}^2
  \ \le\ \sum_{j>d}\sum_{k\ge1}\|D_ju_k\|_{L^2(\gamma)}^2
  \ \le\ d^{-2t}\sum_{j>d}j^{2t}\sum_{k\ge1}\|D_ju_k\|_{L^2(\gamma)}^2
  \ \le\ B_2^2d^{-2t}.
\]
Substituting both bounds into \eqref{eq:two-index-split} gives
\eqref{eq:two-index-approx}. Finally $g_d$ is $\mathcal F_d$-measurable, hence a function
of $\hat e_1,\dots,\hat e_d$, so $T_d$ is admissible in the infimum over cylindrical
gradient maps.
\end{proof}

\subsection{Two norms in the chaos basis}

\begin{lemma}
\label{lem:chaos-norms}
Let $\phi=\sum_\alpha c_\alpha H_\alpha\in\mathbb D^{2,2}(\gamma)$ and
$w_r(\alpha)=\sum_jj^{2r}\alpha_j$. Then \eqref{eq:hermite-orth} holds, and
\[
  \|T-I\|_{L^2(\gamma;\mathcal H)}^2=\sum_\alpha c_\alpha^2|\alpha|,
  \qquad
  \|T\|_{\mathrm{out},s}^2=\sum_\alpha c_\alpha^2w_s(\alpha),
  \qquad
  \|T\|_{\mathrm{in},t}^2=\sum_\alpha c_\alpha^2(|\alpha|-1)w_t(\alpha).
\]
\end{lemma}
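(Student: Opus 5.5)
The plan is to compute everything through the derivative rule $\partial_kH_\alpha=\sqrt{\alpha_k}\,H_{\alpha-e_k}$, where $H_{\alpha-e_k}:=0$ if $\alpha_k=0$. We combine it with orthonormality of $\{H_\beta\}$ in $L^2(\gamma)$, which holds because the $\hat e_j$ are i.i.d.\ $N(0,1)$ and the $h_n$ are orthonormal. For a cylindrical polynomial, \eqref{eq:chain} gives $\nabla_{\mathcal H}H_\alpha=\sum_k\sqrt{\alpha_k}H_{\alpha-e_k}e_k$. Since $\{e_k\}$ is $\mathcal H$-orthonormal,
\[
\langle\nabla_{\mathcal H}H_\alpha,\nabla_{\mathcal H}H_\beta\rangle_{L^2(\gamma;\mathcal H)}=\sum_k\sqrt{\alpha_k\beta_k}\,\mathbb E[H_{\alpha-e_k}H_{\beta-e_k}].
\]
The $k$-th term is nonzero only if $\alpha_k,\beta_k\ge1$ and $\alpha-e_k=\beta-e_k$, that is, $\alpha=\beta$. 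The sum is then $\sum_k\alpha_k=|\alpha|$, which proves \eqref{eq:hermite-orth}.

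For the three norms, write $u_k=D_k\phi=\sum_\alpha c_\alpha\sqrt{\alpha_k}H_{\alpha-e_k}$, convergent in $L^2(\gamma)$ since $\phi\in\mathbb D^{1,2}$. This is the same expansion used in the proof of Theorem~\ref{thm:two-index-approx}. For fixed $k$, the map $\alpha\mapsto\alpha-e_k$ is injective on $\{\alpha_k\ge1\}$, so Parseval gives $\|u_k\|_{L^2(\gamma)}^2=\sum_\alpha c_\alpha^2\alpha_k$. Summing over $k$ with weight $1$ gives $\sum_\alpha c_\alpha^2|\alpha|=\|T-I\|^2$. Summing with weight $k^{2s}$ gives $\sum_\alpha c_\alpha^2\sum_kk^{2s}\alpha_k=\sum_\alpha c_\alpha^2w_s(\alpha)$. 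Exchanging the order of summation is legitimate because all terms are nonnegative (Tonelli).

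For the input norm, differentiate once more. Using $\phi\in\mathbb D^{2,2}$,
\[
D_ju_k=\sum_\alpha c_\alpha\sqrt{\alpha_k(\alpha-e_k)_j}\,H_{\alpha-e_k-e_j}.
\]
For fixed $(j,k)$ this is again an injective reindexing, so $\|D_ju_k\|^2=\sum_\alpha c_\alpha^2\alpha_k(\alpha_j-\delta_{jk})$. Summing over $k$ gives
\[
\sum_kc_\alpha^2\alpha_k(\alpha_j-\delta_{jk})=c_\alpha^2(|\alpha|\alpha_j-\alpha_j)=c_\alpha^2(|\alpha|-1)\alpha_j.
\]
Multiplying by $j^{2t}$ and summing over $j$ yields $\sum_\alpha c_\alpha^2(|\alpha|-1)w_t(\alpha)$, again by Tonelli.

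The only delicate point, and the one I expect to need care, is justifying the termwise differentiation of the chaos series. I would handle it by truncating $\phi$ to finitely many multi-indices, where \eqref{eq:chain} applies directly. The formulas are exact identities there. One then passes to the limit using closedness of $D$ on $\mathbb D^{1,2}$ and $\mathbb D^{2,2}$: the truncations converge to $\phi$ in $L^2$, and the right-hand sides converge by monotone convergence. For the $(|\alpha|-1)$ weight, finiteness of the limit is exactly what $\phi\in\mathbb D^{2,2}$ provides, since $\sum_\alpha c_\alpha^2|\alpha|^2<\infty$.
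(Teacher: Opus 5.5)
Your proposal is correct and follows the paper's proof essentially line by line. The steps match: the derivative rule $\partial_kH_\alpha=\sqrt{\alpha_k}H_{\alpha-e_k}$ combined with orthonormality gives \eqref{eq:hermite-orth}; injective reindexing and Parseval give $\|u_k\|^2$ and $\|D_ju_k\|^2$, with the diagonal case $j=k$ handled through $\alpha_k(\alpha_j-\delta_{jk})$; and you then sum over $k$ and $j$. Your truncation-plus-closedness justification of termwise differentiation, and the Tonelli remark, make explicit what the paper leaves implicit. One small correction: the weighted identities hold in $[0,\infty]$ without any finiteness input, so $\phi\in\mathbb D^{2,2}$ is needed only to identify each $D_ju_k$ with its series, not to make the weighted sum finite.
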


\begin{proof}
From $h_n'=\sqrt n\,h_{n-1}$ we get $\partial_kH_\alpha=\sqrt{\alpha_k}H_{\alpha-e_k}$,
with the convention that the term vanishes when $\alpha_k=0$. Hence
$\langle\nabla_{\mathcal H}H_\alpha,\nabla_{\mathcal H}H_\beta\rangle_{L^2(\gamma;\mathcal H)}
=\sum_k\sqrt{\alpha_k\beta_k}\,\mathbb E[H_{\alpha-e_k}H_{\beta-e_k}]$, and since
$\{H_\alpha\}$ is orthonormal the summand vanishes unless $\alpha-e_k=\beta-e_k$, that is
unless $\alpha=\beta$; at $\alpha=\beta$ the sum is $\sum_k\alpha_k=|\alpha|$. This is
\eqref{eq:hermite-orth}, and the first display follows.

Next, $u_k=D_k\phi=\sum_\alpha c_\alpha\sqrt{\alpha_k}H_{\alpha-e_k}$. The multi-indices
$\alpha-e_k$ appearing here are distinct for distinct $\alpha$ at fixed $k$, so
$\|u_k\|_{L^2(\gamma)}^2=\sum_\alpha c_\alpha^2\alpha_k$ and
$\sum_kk^{2s}\|u_k\|^2=\sum_\alpha c_\alpha^2\sum_kk^{2s}\alpha_k=\sum_\alpha c_\alpha^2w_s(\alpha)$.

For the second derivatives, $D_jD_k\phi=\sum_\alpha c_\alpha\sqrt{\alpha_k}\sqrt{(\alpha-e_k)_j}\,H_{\alpha-e_k-e_j}$,
so $\|D_ju_k\|^2=\sum_\alpha c_\alpha^2\alpha_k\alpha_j$ when $j\ne k$ and
$\|D_ku_k\|^2=\sum_\alpha c_\alpha^2\alpha_k(\alpha_k-1)$. Summing over $k$ at fixed $j$,
\[
  \sum_k\|D_ju_k\|^2
  =\sum_\alpha c_\alpha^2\Big[\alpha_j\sum_{k\ne j}\alpha_k+\alpha_j(\alpha_j-1)\Big]
  =\sum_\alpha c_\alpha^2\,\alpha_j\,(|\alpha|-1),
\]
and multiplying by $j^{2t}$ and summing over $j$ gives the third display.
\end{proof}

\subsection{Proof of Theorem~\ref{thm:structured-risk}}

\begin{proof}
Fix $T\in\mathcal C_{q,m}^{s}(B)$ with coefficients $(c_\alpha)_{\alpha\in\Lambda_{q,m}}$.

By \eqref{eq:hermite-orth} the fields
$\{\nabla_{\mathcal H}H_\alpha\}$ are orthogonal, so
\begin{equation}
\label{eq:structured-split}
  \big\|\widehat T_d-T\big\|_{L^2(\gamma;\mathcal H)}^2
  =\sum_{\alpha\in\mathcal I_d}(\widehat c_\alpha-c_\alpha)^2|\alpha|
  \ +\ \sum_{\alpha\in\Lambda_{q,m}\setminus\mathcal I_d}c_\alpha^2|\alpha| ,
\end{equation}
an identity, not a triangle-inequality bound.

Let $\alpha\in\Lambda_{q,m}\setminus\mathcal I_d$. Then
$\alpha_j\ge1$ for some $j>d$, so $w_s(\alpha)\ge j^{2s}\ge(d+1)^{2s}$, while
$|\alpha|\le m$ by definition of $\Lambda_{q,m}$. Hence
$c_\alpha^2|\alpha|\le m\,c_\alpha^2\le m\,(d+1)^{-2s}c_\alpha^2w_s(\alpha)$, and summing,
by Lemma~\ref{lem:chaos-norms},
\[
  \sum_{\alpha\in\Lambda_{q,m}\setminus\mathcal I_d}c_\alpha^2|\alpha|
  \ \le\ m\,(d+1)^{-2s}\sum_\alpha c_\alpha^2w_s(\alpha)
  \ \le\ m\,B^2(d+1)^{-2s}.
\]
Only the output condition is used. The degree bound makes this possible, since it caps
the factor $|\alpha|$ that each discarded coefficient carries, the role played by the
input index of Theorem~\ref{thm:two-index-approx} when the degree is unbounded.

Fix $\alpha\in\mathcal I_d$ and write
$X_i:=\langle D_i,\nabla H_\alpha(Z_i)\rangle$. Since $\operatorname{supp}\alpha\subseteq[d]$,
the inner product involves only retained coordinates. By Assumption~\ref{ass:data} and
\eqref{eq:D-decomp}, $D_{ik}=u_k(f_0^{(i)})+\xi^{(i)}_k$ with $\xi^{(i)}$ centered and
independent of the design, so
\[
  \mathbb E X_i=\big\langle\nabla_{\mathcal H}\phi,\nabla_{\mathcal H}H_\alpha\big\rangle_{L^2(\gamma;\mathcal H)}
  =\sum_\beta c_\beta\big\langle\nabla_{\mathcal H}H_\beta,\nabla_{\mathcal H}H_\alpha\big\rangle
  =c_\alpha|\alpha| ,
\]
using \eqref{eq:hermite-orth}. Hence $\mathbb E\widehat c_\alpha=c_\alpha$, for every
$\alpha\in\mathcal I_d$ and irrespective of the discarded coefficients.

With $u$ the full displacement field,
$X_i=\langle u(f_0^{(i)}),\nabla H_\alpha(Z_i)\rangle+\langle\xi^{(i)},\nabla H_\alpha(Z_i)\rangle$,
and the two terms are uncorrelated because $\xi^{(i)}$ is centered and independent of the
design. The second has variance
$\sigma^2\,\mathbb E\|\nabla H_\alpha\|^2=\sigma^2|\alpha|$. For the first, Cauchy--Schwarz
gives $\mathbb E\langle u,\nabla H_\alpha\rangle^2\le\mathbb E[\|u\|^2\|\nabla H_\alpha\|^2]
\le(\mathbb E\|u\|^4)^{1/2}(\mathbb E\|\nabla H_\alpha\|^4)^{1/2}$. Both factors are bounded using Minkowski's inequality and the hypercontractive bound
$\|F\|_4\le3^{n/2}\|F\|_2$ for $F$ of chaos degree at most $n$
\citep[Section~1.4]{Nualart2006}. Since $u_k$ has chaos degree at most $m-1$,
\[
  \mathbb E\|u\|^4=\big\|\textstyle\sum_ku_k^2\big\|_2^2
  \le\Big(\sum_k\|u_k\|_4^2\Big)^2
  \le\Big(3^{m-1}\sum_k\|u_k\|_2^2\Big)^2
  =9^{m-1}\big(\mathbb E\|u\|^2\big)^2 ,
\]
and likewise, using $\|H_\beta\|_4^2\le3^{|\beta|}$ and
$\|\nabla H_\alpha\|^2=\sum_k\alpha_kH_{\alpha-e_k}^2$,
\[
  \mathbb E\|\nabla H_\alpha\|^4\le\Big(\sum_k\alpha_k\,3^{|\alpha|-1}\Big)^2
  =9^{|\alpha|-1}|\alpha|^2\le9^{m-1}|\alpha|^2 .
\]
Since $\mathbb E\|u\|^2=\|T-I\|_{L^2(\gamma;\mathcal H)}^2\le B^2$ by
Lemma~\ref{lem:chaos-norms} and $w_s(\alpha)\ge|\alpha|$, we get
$\mathbb E\langle u,\nabla H_\alpha\rangle^2\le9^{m-1}B^2|\alpha|$ and therefore
\[
  \operatorname{Var}(\widehat c_\alpha)=\frac{\operatorname{Var}(X_1)}{N|\alpha|^2}
  \ \le\ \frac{9^{m-1}B^2+\sigma^2}{N|\alpha|}.
\]

Taking expectations in \eqref{eq:structured-split} and using
the approximation, unbiasedness, and variance bounds above,
\[
  \mathbb E\big\|\widehat T_d-T\big\|^2
  \le\sum_{\alpha\in\mathcal I_d}|\alpha|\,\frac{9^{m-1}B^2+\sigma^2}{N|\alpha|}
  +m\,B^2(d+1)^{-2s}
  =\big(9^{m-1}B^2+\sigma^2\big)\frac{p_d}N+m\,B^2(d+1)^{-2s},
\]
which is \eqref{eq:structured-risk}. Nothing above used the value of $T$ beyond membership in
the class, so the bound is uniform over it.

Since $p_d\le q(dm)^q$, the right side of
\eqref{eq:structured-risk} is at most $C_1(q,m)(B^2+\sigma^2)d^q/N+mB^2d^{-2s}$. Equating
the two terms gives $d\asymp\big(N/(B^2+\sigma^2)\big)^{1/(q+2s)}$, at which both are of
order $\big((B^2+\sigma^2)/N\big)^{2s/(q+2s)}$. Jensen's inequality applied to the square
root gives \eqref{eq:structured-rate}.
\end{proof}

\subsection{Proof of Theorem~\ref{thm:three-index}}

\begin{lemma}
\label{lem:analytic-moment}
Let $\varpi>\tfrac12\log3$ and $\|T\|_{\mathrm{ch},\varpi}\le B_3$, with no restriction on
the chaos degree of $\phi$. Then, with $u=\nabla_{\mathcal H}\phi$ and
$A_\varpi=e^{-2\varpi}/(1-3e^{-2\varpi})$,
\[
  \mathbb E\|u\|_{\mathcal H}^4\ \le\ \big(A_\varpi B_3^2\big)^2 .
\]
\end{lemma}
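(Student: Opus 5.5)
The plan is to decompose the displacement by chaos degree and run the hypercontractive estimate from the proof of Theorem~\ref{thm:structured-risk} separately on each degree. The geometric weight $e^{2\varpi|\alpha|}$ in $\|T\|_{\mathrm{ch},\varpi}$ then pays for the factor $3^{n}$ that degree $n$ costs.

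\emph{Step 1: chaos decomposition.} Write $\phi=\sum_{n\ge1}\phi_n$ with $\phi_n:=\sum_{|\alpha|=n}c_\alpha H_\alpha$. Set $u^{(n)}:=\nabla_{\mathcal H}\phi_n$, so that $u=\sum_n u^{(n)}$ in $L^2(\gamma;\mathcal H)$. Each coordinate $u^{(n)}_k=\sum_{|\alpha|=n}c_\alpha\sqrt{\alpha_k}H_{\alpha-e_k}$ lies in the homogeneous chaos of degree $n-1$. By Lemma~\ref{lem:chaos-norms}, or directly from \eqref{eq:hermite-orth},
\[
  a_n:=\sum_k\|u^{(n)}_k\|_{L^2(\gamma)}^2=\sum_{|\alpha|=n}c_\alpha^2\,n ,
  \qquad
  \sum_n a_n e^{2\varpi n}=\|T\|_{\mathrm{ch},\varpi}^2\le B_3^2 .
\]

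\emph{Step 2: degreewise $L^4$ bound.} Work in the Bochner space $L^4(\gamma;\mathcal H)$, where $\|v\|_{L^4(\gamma;\mathcal H)}^2=\|\sum_k v_k^2\|_{L^2(\gamma)}$. Minkowski's inequality in $L^2(\gamma)$ applied to $\sum_k (u_k^{(n)})^2$ gives $\|\sum_k (u^{(n)}_k)^2\|_2\le\sum_k\|u^{(n)}_k\|_4^2$. Hypercontractivity, $\|F\|_4\le 3^{(n-1)/2}\|F\|_2$ for $F$ of chaos degree $n-1$, then yields
\[
  \|u^{(n)}\|_{L^4(\gamma;\mathcal H)}^2\le 3^{n-1}a_n ,
\]
which is exactly the computation already used in the proof of Theorem~\ref{thm:structured-risk}.

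\emph{Step 3: summation over degrees.} By the triangle inequality in $L^4(\gamma;\mathcal H)$ and Cauchy--Schwarz,
\[
  \|u\|_{L^4(\gamma;\mathcal H)}\le\sum_{n\ge1}3^{(n-1)/2}a_n^{1/2}
  \le\Big(\sum_{n\ge1}3^{n-1}e^{-2\varpi n}\Big)^{1/2}\Big(\sum_{n\ge1}a_ne^{2\varpi n}\Big)^{1/2}.
\]
The first factor is the geometric series $e^{-2\varpi}\sum_{n\ge0}(3e^{-2\varpi})^n=A_\varpi$. It converges precisely when $\varpi>\tfrac12\log3$, which is where that hypothesis enters. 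The second factor is at most $B_3$. Hence $\|u\|_{L^4(\gamma;\mathcal H)}^2\le A_\varpi B_3^2$, and raising to the square gives $\mathbb E\|u\|_{\mathcal H}^4\le(A_\varpi B_3^2)^2$.

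\emph{Main obstacle: the limiting arguments.} The hypercontractive bound is stated for finitely many variables, while $u^{(n)}$ involves infinitely many coordinates and infinitely many $k$. I would first apply Steps 2--3 to the finite truncations $\phi^{(M)}:=\sum_{|\alpha|\le M,\ \operatorname{supp}\alpha\subseteq[M]}c_\alpha H_\alpha$, where everything is a finite polynomial and the bounds hold verbatim with the same $a_n$ replaced by smaller partial sums. Next, $u^{(M)}\to u$ in $L^2(\gamma;\mathcal H)$, since $\sum_\alpha c_\alpha^2|\alpha|<\infty$. Passing to an a.s.\ convergent subsequence and applying Fatou's lemma gives $\mathbb E\|u\|^4\le\liminf_M\mathbb E\|u^{(M)}\|^4\le(A_\varpi B_3^2)^2$. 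This also shows $u\in L^4$ a posteriori, with no degree restriction on $\phi$.
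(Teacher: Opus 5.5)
Your proof is correct and follows essentially the same route as the paper: decompose by chaos degree, apply Minkowski, use the hypercontractive bound $\|F\|_4\le 3^{n/2}\|F\|_2$, and apply Cauchy--Schwarz with weights $e^{\mp\varpi n}$ to obtain the geometric series $A_\varpi$. The differences are minor: the paper applies Minkowski over the coordinates $k$ first and then does the degree decomposition and weighted Cauchy--Schwarz coordinate by coordinate, whereas you sum over degrees directly in $L^4(\gamma;\mathcal H)$; your truncation-plus-Fatou step also spells out a limiting argument that the paper leaves implicit.
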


\begin{proof}
Decompose $u_k=\sum_{n\ge0}u_k^{(n)}$ by chaos degree, where
$u_k^{(n)}:=\sum_{|\alpha|=n+1}c_\alpha\sqrt{\alpha_k}H_{\alpha-e_k}$ lies in the $n$th
chaos, so that $\sum_k\|u_k^{(n)}\|_2^2=\sum_{|\alpha|=n+1}c_\alpha^2|\alpha|$. Minkowski's
inequality in $L^2$ and then in $L^4$, followed by the hypercontractive bound
$\|F\|_4\le3^{n/2}\|F\|_2$ on the $n$th chaos, give
\[
  \big(\mathbb E\|u\|^4\big)^{1/2}=\Big\|\sum_ku_k^2\Big\|_2\le\sum_k\|u_k\|_4^2
  \le\sum_k\Big(\sum_{n\ge0}3^{n/2}\big\|u_k^{(n)}\big\|_2\Big)^2 .
\]
For each $k$ apply Cauchy--Schwarz with the weights $e^{\mp\varpi(n+1)}$:
\[
  \Big(\sum_n3^{n/2}\big\|u_k^{(n)}\big\|_2\Big)^2
  \le\Big(\sum_n3^ne^{-2\varpi(n+1)}\Big)
  \Big(\sum_ne^{2\varpi(n+1)}\big\|u_k^{(n)}\big\|_2^2\Big)
  =A_\varpi\sum_ne^{2\varpi(n+1)}\big\|u_k^{(n)}\big\|_2^2 ,
\]
where the first factor equals $A_\varpi$ since $3e^{-2\varpi}<1$. Summing over
$k$ and using \eqref{eq:norms-in-chaos},
\[
  \sum_k\|u_k\|_4^2\le A_\varpi\sum_{n\ge0}e^{2\varpi(n+1)}\sum_{|\alpha|=n+1}c_\alpha^2|\alpha|
  =A_\varpi\sum_\alpha c_\alpha^2|\alpha|e^{2\varpi|\alpha|}\le A_\varpi B_3^2 . \qedhere
\]
\end{proof}

\begin{proof}[Proof of Theorem~\ref{thm:three-index}]
Write $m=m_N$, $d=d_N$, $\mathcal I=\mathcal I_{d,m}$. The risk split and the unbiasedness argument in the proof of
Theorem~\ref{thm:structured-risk} do not use any degree bound on $\phi$ and apply verbatim:
the risk splits exactly as in \eqref{eq:structured-split}, and each $\widehat c_\alpha$,
$\alpha\in\mathcal I$, is unbiased for $c_\alpha$ by orthogonality of the features, whatever the
discarded part of the expansion.

For the variance, only the estimator's feature set is truncated at degree $m$; the true
displacement $u=\nabla_{\mathcal H}\phi$ is not, and its chaos degree is unbounded on
$\mathcal K_q^{s,t,\varpi}$. Lemma~\ref{lem:analytic-moment} controls
$\mathbb E\|u\|^4$ uniformly in the degree. Combining it by Cauchy--Schwarz with
$\mathbb E\|\nabla H_\alpha\|^4\le9^{|\alpha|-1}|\alpha|^2$, which holds for the retained
features by the variance computation in the proof of Theorem~\ref{thm:structured-risk},
\[
  \mathbb E\langle u,\nabla H_\alpha\rangle^2
  \le\big(\mathbb E\|u\|^4\big)^{1/2}\big(\mathbb E\|\nabla H_\alpha\|^4\big)^{1/2}
  \le A_\varpi B_3^2\,3^{|\alpha|-1}|\alpha|
  \le V_m|\alpha|,
  \qquad V_m:=A_\varpi B_3^2\,3^{m-1},
\]
so that, exactly as there,
$\operatorname{Var}(\widehat c_\alpha)\le(V_m+\sigma^2)/(N|\alpha|)$. The exponential factor is $3^{m-1}$ because the analytic index removes the degree
dependence from the factor contributed by $u$. Hence
\begin{equation}
\label{eq:three-index-terms}
  \mathbb E\big\|\widehat T-T\big\|^2\ \le\ {\rm(I)}+{\rm(II)}+{\rm(III)} ,
\end{equation}
where $p_{d,m}=|\mathcal I|\le q(dm)^q$ as in \eqref{eq:hermite-sieve} and
\[
  {\rm(I)}:=\big(V_m+\sigma^2\big)\frac{p_{d,m}}N,
  \qquad
  {\rm(II)}:=\sum_{\substack{\alpha\notin\mathcal I\\ |\alpha|\le m}}c_\alpha^2|\alpha|,
  \qquad
  {\rm(III)}:=\sum_{|\alpha|>m}c_\alpha^2|\alpha| .
\]

For $|\alpha|>m$ we have $e^{2\varpi|\alpha|}\ge e^{2\varpi m}$, so
$\rm(III)\le e^{-2\varpi m}\|T\|_{\mathrm{ch},\varpi}^2\le B_3^2e^{-2\varpi m}$. By the
choice of $m$, $e^{-2\varpi m}\le e^{-2\varpi \chi\log N/\log3}=N^{-\rho_1\chi}$.

Every $\alpha$ contributing to $\rm(II)$ has $\alpha_j\ge1$ for some $j>d$ and $|\alpha|\le m$.
Two bounds are available. Using the output index alone, exactly as in the approximation bound of the proof
of Theorem~\ref{thm:structured-risk}, $\rm(II)\le mB_1^2(d+1)^{-2s}$. Using
Theorem~\ref{thm:two-index-approx}'s route, the terms with $|\alpha|=1$ contribute at most
$B_1^2(d+1)^{-2s}$ and those with $|\alpha|\ge2$ at most $2B_2^2(d+1)^{-2t}$, by
$|\alpha|\le2(|\alpha|-1)$ and $w_t(\alpha)\ge(d+1)^{2t}$. Hence
\[
  \rm(II)\ \le\ \min\Big\{mB_1^2(d+1)^{-2s},\ B_1^2(d+1)^{-2s}+2B_2^2(d+1)^{-2t}\Big\}
  \ \le\ mB_1^2d^{-2s} ,
\]
and the second expression improves the first by the factor $m$ when $t\ge s$. We use the
first bound below, which suffices for the stated rate.

By construction
$m\le\big(\chi\log N/\log3\big)+1+q$, so $3^{m}\le3^{1+q}N^{\chi}$ and
$V_m\le3^{1+q}A_\varpi B_3^2N^\chi$; the factor $3^{1+q}$ is a constant, since $q$ is
fixed, and is absorbed below. Hence $\rm(I)\le C_0N^{\chi-1}(dm)^q$ with
$C_0=C_0(q,\varpi,B_3,\sigma)$. With
$d^{\,q+2s}=N^{1-\chi}m^{-(q-1)}$,
\[
  {\rm(I)}\le C_0\,m^{q}\,N^{\chi-1}\big(N^{1-\chi}m^{-(q-1)}\big)^{\frac q{q+2s}}
  =C_0\,m^{\,q-\frac{q(q-1)}{q+2s}}\,N^{-(1-\chi)\rho_0},
\]
using $\chi-1+q(1-\chi)/(q+2s)=-(1-\chi)\big(1-\tfrac q{q+2s}\big)=-(1-\chi)\rho_0$, and
\[
  {\rm(II)}\le B_1^2\,m\,\big(N^{1-\chi}m^{-(q-1)}\big)^{-\frac{2s}{q+2s}}
  =B_1^2\,m^{\,1+\frac{2s(q-1)}{q+2s}}\,N^{-(1-\chi)\rho_0}.
\]
Both polylogarithmic exponents are at most $q$, since $q-q(q-1)/(q+2s)\le q$ and
$1+2s(q-1)/(q+2s)\le1+(q-1)=q$. Rounding $d_N$ up changes each of $\rm(I)$ and $\rm(II)$
by at most a constant factor depending on $q$ and $s$, since
$d_N\le2\big(N^{1-\chi}m^{-(q-1)}\big)^{1/(q+2s)}$ once the latter is at least $1$.

Finally the choice $\chi=\rho_0/(\rho_1+\rho_0)$ equates the two $N$-exponents:
$\rho_1\chi=\rho_1\rho_0/(\rho_1+\rho_0)$ and
$(1-\chi)\rho_0=\rho_0\rho_1/(\rho_1+\rho_0)$ are the same number. Adding the three terms and
using $m\le C\log N$ gives \eqref{eq:three-index-rate} with the stated
$(\log N)^q$. Nothing above used the value of $T$ beyond membership in
$\mathcal K_q^{s,t,\varpi}$, so the bound is uniform over the class.
\end{proof}

\section{Additional analysis for the two-groups model}
\label{sup:two-groups}

Section~\ref{subsec:bayesian-example} of the main manuscript presents the two-groups
model in its log-odds parametrization and states the resulting rate. This section supplies
the population-level verifications making $T$ a well-defined transport map, together with
the one-dimensional technical lemmas the analysis rests on.

\subsection{Population-level verifications for the two-groups model}
\label{app:two-groups-population}

This subsection verifies that $\phi\in\mathbb D^{1,2}(\gamma)$ with
Cameron--Martin gradient $\nabla_{\mathcal H}\phi$, and that $\pi_1$ and $\gamma$
are mutually absolutely continuous.

\begin{proposition}
\label{prop:two-groups-population}
Let $\delta_k=c_kk^{-2\rho}$ with $\rho>\tfrac12$ and $c_k\in[c_-,c_+]\subset(0,\tfrac13]$,
and let $\phi=\sum_{k\ge1}\bar\Xi_{\delta_k}(\hat e_k)$ be the potential of
Section~\ref{subsec:bayesian-example}. Then
$\phi\in\mathbb D^{1,2}(\gamma)$ with
$\nabla_{\mathcal H}\phi=\sum_{k\ge1}u_ke_k$ and
$u_k=M_{\delta_k}(\hat e_k)-\hat e_k$, and the pushforward
$\pi_1:=T_\#\gamma=\bigotimes_{k\ge1}\pi_{1,k}$, for
$\pi_{1,k}:=(1-\delta_k)N(0,1)+\delta_k\nu$, satisfies $\pi_1\sim\gamma$, the two measures being
mutually absolutely continuous, so $f_1>0$ $\gamma$-a.e. Since
$\pi_0=\gamma$ here, $f_0\equiv1$, so (A3) holds for the pair $(\pi_0,\pi_1)$. The map
$T$ has finite Cameron--Martin cost and is the optimal map for that cost, so
\textnormal{(A1)}, \textnormal{(A2)} and Assumption~\ref{ass:rep} hold for the pair
$(\gamma,\pi_1)$ with potential $\phi$.
\end{proposition}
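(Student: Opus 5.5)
The proof works coordinate by coordinate and then assembles the pieces using independence of the $\hat e_k$ under $\gamma$ and summability of $\delta_k$. The key quantitative input is the mixture bound \eqref{eq:two-groups-bias}. Since $M_{\delta_k}$ is the monotone rearrangement of $N(0,1)$ onto $\pi_{1,k}$, it gives $\|u_k\|_{L^2(\gamma)}^2\le\delta_kW_0^2$. Hence $\sum_k\|u_k\|_{L^2(\gamma)}^2\le c_+W_0^2\sum_kk^{-2\rho}<\infty$ because $\rho>\tfrac12$.

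\textbf{Sobolev regularity.} Each summand $\bar\Xi_{\delta_k}(\hat e_k)$ is a centered one-variable function of polynomial growth, by Lemma~\ref{lem:mtheta-growth}. Its derivative is $M_{\delta_k}-\mathrm{id}$, so it lies in $\mathbb D^{1,2}(\gamma)$ with gradient $u_ke_k$, by the chain rule \eqref{eq:chain}.

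The partial sums $\phi_n=\sum_{k\le n}\bar\Xi_{\delta_k}(\hat e_k)$ have orthogonal gradients. For convergence of the potentials themselves, I use the one-dimensional Gaussian Poincar\'e inequality on each centered summand: $\operatorname{Var}(\bar\Xi_{\delta_k}(\hat e_k))\le\|u_k\|_{L^2}^2$. The summands are independent and centered, so $\|\phi_n-\phi_{n'}\|_{L^2}^2\le\sum_{n'<k\le n}\|u_k\|^2$.

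So $(\phi_n)$ is Cauchy in $\mathbb D^{1,2}$, and closedness of $\nabla_{\mathcal H}$ gives $\phi\in\mathbb D^{1,2}(\gamma)$ with $\nabla_{\mathcal H}\phi=\sum_ku_ke_k$. It follows that $T(f)=\sum_kM_{\delta_k}(\hat e_k)e_k$ in Cameron--Martin coordinates. Because the $\hat e_k$ are i.i.d.\ $N(0,1)$ and $M_{\delta_k}$ pushes $N(0,1)$ to $\pi_{1,k}$, we get $T_\#\gamma=\bigotimes_k\pi_{1,k}$.

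\textbf{Equivalence $\pi_1\sim\gamma$ (the main obstacle).} I would use Kakutani's dichotomy for product measures. Both $\pi_1$ and $\gamma$ are products in the $\hat e_k$ coordinates, and each factor pair is mutually absolutely continuous, because the mixture density $(1-\delta_k)\varphi+\delta_k\tfrac12e^{-|y|}$ is strictly positive. It then suffices to show $\sum_k(1-\rho_k)<\infty$, where $\rho_k$ is the Hellinger affinity of $N(0,1)$ and $\pi_{1,k}$.

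Write $p=\varphi$ and $q_k=(1-\delta_k)\varphi+\delta_k\ell$, with $\ell$ the Laplace density. The squared Hellinger distance satisfies
\[
H^2\le\int\frac{(q_k-p)^2}{p}=\delta_k^2\int\frac{(\ell-\varphi)^2}{\varphi},
\]
but this last integral diverges because the Laplace tails are heavier. So I instead bound $H^2$ directly. The function $x\mapsto(\sqrt{(1-\delta)+\delta x}-1)^2$ is at most $\min(\delta^2x^2,\,\delta x)$ up to constants. Applying this with $x=\ell/\varphi$ gives
\[
H^2(p,q_k)\lesssim\delta_k\int\ell=\delta_k.
\]
This crude bound already suffices, since $\sum\delta_k<\infty$. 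Kakutani then yields $\pi_1\sim\gamma$, hence $f_1>0$ $\gamma$-a.e., and (A3) holds since $f_0\equiv1$.

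\textbf{Remaining verifications.} (A1) holds because the second moment of $\pi_1$ equals $\sum_k\lambda_k\mathbb E[M_{\delta_k}(X)^2]$. By the mixture second moments this is at most $\sum_k\lambda_k\max(1,2)<\infty$, as the Laplace variance is $2$.

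(A2) and the finite cost follow from $\|\nabla_{\mathcal H}\phi\|_{L^2(\gamma;\mathcal H)}^2=\sum_k\|u_k\|^2<\infty$.

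$\mathcal H$-convexity holds coordinatewise: $h\mapsto\phi(f+h)+\tfrac12|h|^2_{\mathcal H}$ is a sum of the one-dimensional functions $h_k\mapsto\tfrac12(\hat e_k+h_k)^2+\Xi_{\delta_k}(\hat e_k+h_k)$ plus terms constant in $h$. Each is convex, since its derivative $M_{\delta_k}$ is nondecreasing, and the sum converges pointwise for a.e.\ $f$.

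Optimality follows by the argument used for the hypothesis family in Section~\ref{subsec:hypothesis}. For any coupling of $\gamma$ and $\pi_1$, the cost splits as $\sum_k\mathbb E|Y_k-X_k|^2$, and the $k$th term is at least $W_2^2$ of the $k$th marginals. The coordinatewise monotone map $T$ attains all these lower bounds simultaneously. Uniqueness $\pi_0$-a.s.\ follows from uniqueness of the one-dimensional monotone optimal maps, since the marginals are atomless.
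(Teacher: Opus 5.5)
Most of your argument is the paper's proof. Both use Gaussian Poincar\'e on the independent centered summands and closedness of $\nabla_{\mathcal H}$ to get $\phi\in\mathbb D^{1,2}(\gamma)$. Both apply Kakutani with $H^2(\pi_{1,k},N(0,1))\lesssim\delta_k$. Your pointwise inequality is cleanest as $(\sqrt{1+\delta(x-1)}-1)^2\le\delta|x-1|$, which integrates to the paper's bound $H^2\le\mathrm{TV}(\pi_{1,k},N(0,1))=\delta_k\,\mathrm{TV}(\nu,N(0,1))$. Both get optimality and uniqueness from the Tonelli lower bound $\sum_kW_2^2(N(0,1),\pi_{1,k})$, attained by the coordinatewise monotone map, and (A1) from $\mathbb E_{\pi_{1,k}}z_k^2=1+\delta_k\le2$. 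The one step where you depart from the paper is the $\mathcal H$-convexity of $\phi$, and there the argument has a gap as written.

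Definition~\ref{def:Hconvex} needs a version of $\phi$ and a full-measure set of $f$ on which $h\mapsto\phi(f+h)+\tfrac12|h|_{\mathcal H}^2$ is convex on all of $\mathcal H$. ``The sum converges pointwise for a.e.\ $f$'' gives a.s.\ convergence of $\sum_k\bar\Xi_{\delta_k}(\hat e_k(f))$. By quasi-invariance it also gives convergence at $f+h$ for a.e.\ $f$, but only for each \emph{fixed} $h$. The exceptional set depends on $h$, and you need all $h\in\mathcal H$ simultaneously.

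For the block class the paper gets this shift invariance from a Taylor remainder with bounded Hessian (proof of Proposition~\ref{prop:block-population}). That does not transfer here. The second derivative $\Xi_{\delta_k}''=M_{\delta_k}'-1$ is unbounded, since $M_\theta'(x)=\varphi(x)/f_\theta(M_\theta(x))$ grows like $x$ in the Laplace tail. The growth bound of Lemma~\ref{lem:mtheta-growth} also does not decay in $k$.

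Your bookkeeping is off as well. Subtracting $\tfrac12\hat e_k^2+\hat e_kh_k$ from $\tfrac12(\hat e_k+h_k)^2$ removes terms that are affine, not constant, in $h$, and $\sum_k\hat e_k^2=\infty$ a.s. The right coordinate function is $h_k\mapsto\bar\Xi_{\delta_k}(\hat e_k+h_k)+\tfrac12h_k^2$, whose derivative $M_{\delta_k}(\hat e_k+h_k)-\hat e_k$ is nondecreasing.

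The paper closes this step without any convergence-set argument. Once $T$ is known to be the unique optimal map with finite cost, the Feyel--\"Ust\"unel theorem \citep{FeyelUstunel2004} supplies a $1$-convex $\psi\in\mathbb D^{1,2}(\gamma)$ with $T=I+\nabla_{\mathcal H}\psi$. Poincar\'e applied to $\psi-\phi$, whose gradient vanishes, then shows that the two potentials differ by a constant.

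Your direct route can also be completed, as follows.
\begin{enumerate}
\item Take the version $\phi:=\limsup_n\phi_n$, set to $+\infty$ where the $\limsup$ is $-\infty$.
\item Let $A$ be the full-measure set on which $\phi_n(f)$ converges and $\sum_ku_k(f)^2<\infty$.
\item For $f\in A$, $z_k=\hat e_k(f)$ and $h\in\mathcal H$, convexity of $y\mapsto\Xi_\theta(y)+\tfrac12y^2$ gives $\Xi_{\delta_k}(z_k+h_k)-\Xi_{\delta_k}(z_k)\ge u_k(f)h_k-\tfrac12h_k^2$. This lower bound is summable in $k$, so $\phi(f+h)>-\infty$ and the modification never touches these slices.
\item On each such slice, $h\mapsto\phi(f+h)+\tfrac12|h|_{\mathcal H}^2$ is the decreasing limit of the convex functions $\sup_{m\ge n}\bigl[\phi_m(f+h)+\tfrac12\sum_{k\le m}h_k^2\bigr]$, which stay above $-\infty$. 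It is therefore convex.
\end{enumerate}
This keeps the proof self-contained. The paper's version is shorter but relies on the external representation theorem.
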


\begin{proof}
Gaussian Poincar\'e and \eqref{eq:two-groups-bias} give
$\operatorname{Var}_\Phi(\Xi_{\delta_k})\le\|u_k\|_2^2\le\delta_kW_0^2$.
Since $\sum_k\delta_k<\infty$, the centered partial potentials and their gradients
converge in $L^2(\gamma)$ and $L^2(\gamma;\mathcal H)$, respectively.
Each finite partial potential belongs to $\mathbb D^{1,2}$ by the growth bound in
Lemma~\ref{lem:mtheta-growth}. Closedness of the gradient gives the claimed potential
and coefficient representation.

The coordinate pushforwards are independent with laws $\pi_{1,k}$. Both the reference
product and this product give full mass to
$\{z:\sum_k\lambda_kz_k^2<\infty\}$, because
$\mathbb E_{\pi_{1,k}}z_k^2=1+\delta_k\le2$ and $\sum_k\lambda_k<\infty$.
Each $\pi_{1,k}$ is equivalent to $\Phi$, with a positive Lebesgue density, and
\[
H^2(\pi_{1,k},\Phi)\le\mathrm{TV}(\pi_{1,k},\Phi)
=\delta_k\mathrm{TV}(\nu,\Phi)\le\delta_k,
\]
where $H^2(P,Q)=\tfrac12\int(\sqrt{dP}-\sqrt{dQ})^2$.
Kakutani's criterion gives equivalence of the products and hence $\pi_1\sim\gamma$.

For any coupling $\Pi$ of $(\gamma,\pi_1)$, Tonelli's theorem gives
\[
\int|y-x|_{\mathcal H}^2\,d\Pi
\ge\sum_kW_2^2(\Phi,\pi_{1,k}).
\]
The coordinatewise monotone map attains every term of this lower bound, so it is
optimal with cost at most $W_0^2\sum_k\delta_k<\infty$.
The preceding second-moment bound proves (A1), and the equivalence proves (A3).
Uniqueness follows from uniqueness of each one-dimensional optimal coupling.
The Gaussian-source representation theorem \citep{FeyelUstunel2004} supplies a
$1$-convex Sobolev potential for this optimal map. It has the same gradient as $\phi$,
so Gaussian Poincar\'e implies that the two potentials differ by a constant.
Consequently $\phi$ admits the required $1$-convex version.
\end{proof}

\subsection{Technical lemmas for the two-groups model}
\label{app:two-groups-lemmas}

\begin{lemma}
\label{lem:mixture-w2}
For probability measures $P,Q$ on $\mathbb R$ with finite second moment and any
$\theta\in[0,1]$,
\[
W_2\big((1-\theta)P+\theta Q,\ P\big)^2\ \le\ \theta\,W_2(P,Q)^2 .
\]
\end{lemma}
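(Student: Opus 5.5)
The plan is to construct an explicit coupling between the mixture $(1-\theta)P+\theta Q$ and $P$, and then bound its cost. Since $W_2^2$ is the infimum of transport cost over couplings, any admissible coupling gives an upper bound.

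Let $\Pi^\ast$ be an optimal coupling of $(P,Q)$, which exists because both measures have finite second moment. Let $\Delta$ be the diagonal coupling of $(P,P)$, that is, the law of $(X,X)$ with $X\sim P$. Define
\[
\Gamma:=(1-\theta)\Delta+\theta\,\Pi^\ast .
\]
Its first marginal is $(1-\theta)P+\theta P=P$, and its second marginal is $(1-\theta)P+\theta Q$. So $\Gamma$ is a coupling of $P$ and the mixture (up to swapping coordinates, which does not affect the symmetric cost $|x-y|^2$).

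The cost is linear in the coupling. It therefore equals
\[
(1-\theta)\int|x-y|^2\,d\Delta+\theta\int|x-y|^2\,d\Pi^\ast .
\]
The first integral is $0$ and the second is $W_2(P,Q)^2$. This gives $W_2((1-\theta)P+\theta Q,P)^2\le\theta\,W_2(P,Q)^2$.

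Put differently, this is the joint convexity of $W_2^2$ in the pair of measures, applied to the decomposition $P=(1-\theta)P+\theta P$.

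None of the steps is a real obstacle. The only points to check are that $\Pi^\ast$ exists and that the mixture of couplings is a coupling with the stated marginals. Both are immediate from linearity of marginals. Even without existence of an optimal coupling, taking $\epsilon$-optimal couplings and letting $\epsilon\to0$ suffices.
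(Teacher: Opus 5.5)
Your proof is correct and is the same argument as the paper's. The paper takes an optimal pair $(X,Y)$ for $(P,Q)$ and an independent Bernoulli$(\theta)$ switch, and sets $Z=X$ or $Z=Y$ accordingly; the joint law of $(X,Z)$ is exactly your mixture coupling $(1-\theta)\Delta+\theta\,\Pi^\ast$, and the cost computation is the same. You phrase it in terms of couplings where the paper uses random variables.
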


\begin{proof}
Let $(X,Y)$ be an optimal coupling of $(P,Q)$ and let
$B\sim\mathrm{Bernoulli}(\theta)$ be independent of it. Set $Z:=X$ if $B=0$ and $Z:=Y$
otherwise, so that $Z\sim(1-\theta)P+\theta Q$ and $(X,Z)$ is an admissible
coupling of $P$ with the mixture. Conditioning on $B$,
$\mathbb E[(X-Z)^2]=\theta\,\mathbb E[(X-Y)^2]=\theta\,W_2(P,Q)^2$, and the
claim follows since $W_2^2$ is the infimum over couplings. 
\end{proof}

\begin{lemma}
\label{lem:mtheta-growth}
For every $\theta\in[0,1]$ and $x\in\mathbb R$,
\[
|M_\theta(x)-x|\ \le\ C_1\big(1+x^2+\log(1/\theta)\big),
\]
for an absolute constant $C_1$, with the convention $\log(1/0):=+\infty$ interpreted as
no constraint when $\theta=0$. At $\theta=0$, the identity $M_0=\mathrm{id}$ makes the bound trivial.
\end{lemma}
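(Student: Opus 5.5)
We need $|M_\theta(x)-x|\le C_1(1+x^2+\log(1/\theta))$ for $\theta\in(0,1]$, where $M_\theta=F_\theta^{-1}\circ\Phi$ and $F_\theta=(1-\theta)\Phi+\theta F_\nu$. Since $M_\theta$ is odd, it suffices to treat $x\ge0$. There $\Phi(x)\ge\tfrac12$, so $y:=M_\theta(x)\ge0$, and $|M_\theta(x)-x|\le\max(x,y)$. It therefore suffices to bound $y$ by $C(1+x^2+\log(1/\theta))$. The trivial bound $x\le 1+x^2$ handles the other possibility.

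The plan is to compare upper tails. Write $\bar F_\theta:=1-F_\theta=(1-\theta)\bar\Phi+\theta\bar F_\nu$. On $y\ge0$ we have $\bar F_\nu(y)=\tfrac12e^{-y}$. The defining relation is $\bar F_\theta(y)=\bar\Phi(x)$. Both terms of $\bar F_\theta$ are nonnegative, so
\[
\tfrac{\theta}{2}e^{-y}\le\bar F_\theta(y)=\bar\Phi(x).
\]
This gives $y\le\log(1/\theta)+\log\bigl(1/(2\bar\Phi(x))\bigr)$.

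It remains to bound $\log(1/\bar\Phi(x))$ by $C(1+x^2)$ for $x\ge0$. The Mills-ratio lower bound $\bar\Phi(x)\ge\frac{x}{1+x^2}\varphi(x)$ works for $x\ge1$. For $0\le x\le1$ one can instead use $\bar\Phi(x)\ge\bar\Phi(1)$. Together these give $\log(1/\bar\Phi(x))\le x^2/2+\log(1+x^2)+\tfrac12\log(2\pi)+\text{const}\le C(1+x^2)$. Alternatively, the cleaner bound $\bar\Phi(x)\ge\tfrac{1}{4}e^{-x^2}$ for all $x\ge0$ can be checked directly, and gives the same conclusion.

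Combining, $0\le y\le\log(1/\theta)+C(1+x^2)$, hence $|M_\theta(x)-x|\le y+x\le C_1(1+x^2+\log(1/\theta))$ for an absolute constant $C_1$. Oddness transfers the bound to $x<0$. At $\theta=0$ the statement is vacuous by convention and $M_0=\mathrm{id}$ anyway.

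The only delicate step is the uniform Gaussian tail lower bound on $\log(1/\bar\Phi)$. It is a standard estimate, so I expect no real obstacle. Note that the argument never needs a lower bound on $y$ relative to $x$, because $y\ge0$ together with $x\le1+x^2$ already controls the case $y<x$.
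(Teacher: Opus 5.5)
\textbf{Gap: the key step runs the wrong way.} From
$\tfrac{\theta}{2}e^{-y}\le\bar F_\theta(y)=\bar\Phi(x)$ you get $e^{y}\ge\theta/(2\bar\Phi(x))$, that is,
$y\ge\log\bigl(1/(2\bar\Phi(x))\bigr)-\log(1/\theta)$. This is a \emph{lower} bound on $y$, not the upper bound you wrote.

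The reversal is structural. Dropping the nonnegative Gaussian term gives a lower bound on $\bar F_\theta(y)$. Because $\bar F_\theta$ is decreasing, a lower bound on $\bar F_\theta$ can only push $y$ up. The paper's proof warns about exactly this trap in its parenthetical remark. To bound $y=M_\theta(x)$ from above you need an \emph{upper} bound on $\bar F_\theta$, so the Gaussian piece $(1-\theta)\bar\Phi$ must be controlled as well. As written, nothing in your argument bounds $y$ from above, so the lemma is not proved.

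\textbf{The paper's repair.} It exhibits $t_0=x^2+\log(1/\theta)+K$ with $\bar F_\theta(t_0)\le\bar\Phi(x)$. To do so it bounds each of the two pieces by $\tfrac14\bar\Phi(x)$:
\begin{itemize}
\item $(1-\theta)\bar\Phi(t_0)\le\tfrac12e^{-t_0^2/2}$;
\item $\tfrac\theta2e^{-t_0}$.
\end{itemize}
Both bounds use $\bar\Phi(x)\ge c_0e^{-x^2}$. Strict monotonicity of $\bar F_\theta$ then forces $y\le t_0$.

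\textbf{A shorter repair that keeps the rest of your proposal.} Since $\bar F_\theta(y)$ is a convex combination,
$\bar\Phi(x)=\bar F_\theta(y)\le\max\{\bar\Phi(y),\tfrac12e^{-y}\}$. Hence either $y\le x$ or $y\le\log\bigl(1/(2\bar\Phi(x))\bigr)$; the mixture quantile lies below the larger component quantile. Your tail bound $\bar\Phi(x)\ge\tfrac14e^{-x^2}$ then gives $y\le\max\{x,\,x^2+\log2\}$, and therefore $|M_\theta(x)-x|\le\max(x,y)\le1+x^2$. This proves the lemma with $C_1=1$, even without the $\log(1/\theta)$ term.

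Your oddness reduction, the bound $|y-x|\le\max(x,y)$, and the Gaussian tail estimate are all correct and carry over unchanged.
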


The bound is proved in Section~\ref{app:proof-mtheta-growth}. It converts the
pointwise envelopes below into bounds valid on a high-probability design event.

\begin{lemma}
\label{lem:theta-sensitivity}
There is an absolute constant $L_0<\infty$ such that
\[
\sup_{x\in\mathbb R}\big|\partial_\theta M_\theta(x)\big|\ \le\ \frac{L_0}\theta,
\qquad\forall\,\theta\in(0,1].
\]
\end{lemma}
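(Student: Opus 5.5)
The approach is to differentiate the defining identity of $M_\theta$ implicitly in $\theta$. This gives $\partial_\theta M_\theta(x)$ as an explicit ratio evaluated at the point $y=M_\theta(x)$. The supremum over $x$ then becomes a supremum over $y\in\mathbb R$, and there we keep only the Laplace part of the mixture density.

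\emph{Step 1 (implicit differentiation).} By definition $F_\theta(M_\theta(x))=\Phi(x)$ for all $x$ and $\theta\in[0,1]$. The map $(\theta,y)\mapsto F_\theta(y)=(1-\theta)\Phi(y)+\theta F_\nu(y)$ is affine in $\theta$ and continuously differentiable in $y$. Its $y$-derivative is the mixture density $f_\theta(y)=(1-\theta)\varphi(y)+\theta\,\tfrac12e^{-|y|}$, which is strictly positive. The implicit function theorem therefore shows that $\theta\mapsto M_\theta(x)$ is $C^1$ on $(0,1]$, using one-sided derivatives at $\theta=1$. Differentiating the identity gives
\[
\partial_\theta M_\theta(x)=\frac{\Phi(y)-F_\nu(y)}{f_\theta(y)},\qquad y=M_\theta(x).
\]

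\emph{Step 2 (drop the Gaussian part of the density).} Since $(1-\theta)\varphi\ge0$, we have $f_\theta(y)\ge\theta\,\tfrac12e^{-|y|}$. Hence
\[
\sup_x|\partial_\theta M_\theta(x)|\le\frac1\theta\,\sup_{y\in\mathbb R}\frac{|\Phi(y)-F_\nu(y)|}{\tfrac12e^{-|y|}},
\]
and it remains to show that this last supremum is a finite absolute constant $L_0$. Enlarging the range from $M_\theta(\mathbb R)$ to all of $\mathbb R$ costs nothing.

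\emph{Step 3 (tail comparison).} The ratio is even in $y$, because $\Phi$ and $F_\nu$ are symmetric CDFs and so $\Phi(-y)-F_\nu(-y)=-(\Phi(y)-F_\nu(y))$. It therefore suffices to take $y\ge0$. There we write
\[
\Phi(y)-F_\nu(y)=\bigl(1-F_\nu(y)\bigr)-\bigl(1-\Phi(y)\bigr)=\tfrac12e^{-y}-\bar\Phi(y).
\]
We bound the Gaussian tail by $\bar\Phi(y)\le\tfrac12e^{-y^2/2}$, and then use $e^{-y^2/2}\le e^{1/2}e^{-y}$, which follows from $y^2/2-y+1/2\ge0$. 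This gives $|\Phi(y)-F_\nu(y)|\le\tfrac12(1+e^{1/2})e^{-y}$, so we may take $L_0=1+e^{1/2}$.

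The only point needing care is Step 1. One must check that $f_\theta$ does not vanish, which holds because the Laplace component keeps it positive for every $\theta>0$, so the implicit function theorem applies uniformly in $x$. The analytic content is the observation in Step 3 that the Laplace density dominates the CDF gap in the tails. This is exactly why the bound degrades like $1/\theta$: near $\theta=0$ only a $\theta$-fraction of the Laplace density is available to control the Laplace-sized tails of the numerator. In the log-odds coordinate this $1/\theta$ is cancelled by the factor $\theta(1-\theta)$, as used in Proposition~\ref{prop:log-odds-props}.
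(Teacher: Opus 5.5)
Your proof is correct and follows the paper's argument. You implicitly differentiate $F_\theta(M_\theta(x))=\Phi(x)$, lower-bound the density by $f_\theta(t)\ge\tfrac\theta2e^{-|t|}$, and compare $\tfrac12e^{-t}-\bar\Phi(t)$ with $e^{-t}$. The only difference is that the paper gets finiteness of $\sup_{t\ge0}|\tfrac12-\bar\Phi(t)e^t|$ from continuity and the limit at infinity, whereas your bound $\bar\Phi(y)\le\tfrac12e^{-y^2/2}\le\tfrac12e^{1/2}e^{-y}$ gives the explicit constant $L_0=1+e^{1/2}$.
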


\begin{proof}
By oddness of $\partial_\theta M_\theta$ (differentiate $M_\theta(-x)=-M_\theta(x)$ in
$\theta$) it suffices to take $x\ge0$, so $t:=M_\theta(x)\ge0$. Implicit
differentiation of $F_\theta(M_\theta(x))=\Phi(x)$ in $\theta$, using
$\partial_\theta F_\theta(t)=F_\nu(t)-\Phi(t)$, gives
\[
\partial_\theta M_\theta(x)=\frac{\Phi(t)-F_\nu(t)}{f_\theta(t)}
=\frac{\tfrac12e^{-t}-\bar\Phi(t)}{f_\theta(t)},
\qquad
f_\theta(t)=(1-\theta)\varphi(t)+\tfrac\theta2e^{-t}\ \ge\ \tfrac\theta2e^{-t}.
\]
The map $t\mapsto\big(\tfrac12e^{-t}-\bar\Phi(t)\big)e^t=\tfrac12-\bar\Phi(t)e^t$ is
continuous on $[0,\infty)$, equals $0$ at $t=0$, and tends to $\tfrac12$ as $t\to\infty$
since $\bar\Phi(t)e^t\to0$ ($\bar\Phi(t)=O(e^{-t^2/2})$ decays faster than $e^{-t}$
grows). It is therefore bounded, so $L_1:=\sup_{t\ge0}\big|\tfrac12e^{-t}
-\bar\Phi(t)\big|e^t<\infty$. Hence
\[
\big|\partial_\theta M_\theta(x)\big|
=\frac{\big|\tfrac12e^{-t}-\bar\Phi(t)\big|}{f_\theta(t)}
\le\frac{L_1e^{-t}}{\tfrac\theta2e^{-t}}=\frac{2L_1}\theta,
\]
and taking $L_0:=2L_1$ gives the claim.
\end{proof}

\begin{lemma}
\label{lem:mtheta-second}
Write $A(t):=\tfrac12e^{-t}-\bar\Phi(t)$ for $t\ge0$, extended by oddness, so that
$\partial_\theta M_\theta(x)=A(t)/f_\theta(t)$ with $t=M_\theta(x)$ as in the proof of
Lemma~\ref{lem:theta-sensitivity}. Then
\[
  \mathcal V(\theta):=\mathbb E_{X\sim N(0,1)}\big[(\partial_\theta M_\theta(X))^2\big]
  =\int_{\mathbb R}\frac{A(t)^2}{f_\theta(t)}\,dt ,
\]
$\mathcal V$ is continuous and strictly positive on $(0,1]$ with
$\mathcal V_{\min}:=\inf_{\theta\in(0,1]}\mathcal V(\theta)>0$, and
$\mathcal V(\theta)\to\infty$ as $\theta\downarrow0$.
\end{lemma}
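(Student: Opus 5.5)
The plan has three parts: the change of variables that produces the integral formula, continuity and positivity on $(0,1]$, and the two limits that give the uniform lower bound and the blow-up as $\theta\downarrow0$.

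\textbf{Integral formula.} If $X\sim N(0,1)$, then $t=M_\theta(X)$ has law $F_\theta$, which has density $f_\theta$. By the proof of Lemma~\ref{lem:theta-sensitivity}, $\partial_\theta M_\theta(X)=A(t)/f_\theta(t)$. Hence
\[
\mathcal V(\theta)=\int \frac{A(t)^2}{f_\theta(t)^2}\,f_\theta(t)\,dt=\int\frac{A(t)^2}{f_\theta(t)}\,dt .
\]
This uses the odd extension of $A$. The odd extension matches $\Phi(t)-F_\nu(t)$ for negative $t$ because $\Phi$ and $F_\nu$ are both symmetric, and since $A^2$ is even this causes no difficulty.

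\textbf{Continuity and positivity on $(0,1]$.} For $\theta\ge\theta_0>0$ we have $f_\theta(t)\ge\tfrac{\theta_0}{2}e^{-|t|}$. Also $|A(t)|\le L_1e^{-|t|}$, by the bound established in the proof of Lemma~\ref{lem:theta-sensitivity}. The integrand is therefore dominated by $2L_1^2e^{-|t|}/\theta_0$, which is integrable. Since $f_\theta(t)$ is continuous in $\theta$ for each $t$, dominated convergence gives continuity of $\mathcal V$ on $[\theta_0,1]$. For positivity, $A$ is continuous and not identically zero ($A(t)\to\tfrac12e^{-t}$ behaviour for large $t$), and $f_\theta$ is finite, so $\mathcal V(\theta)>0$.

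\textbf{Limits at $\theta\downarrow0$ and the uniform infimum.} As $\theta\downarrow0$, $f_\theta(t)\to\varphi(t)$ pointwise. For large $t$, $A(t)^2\approx\tfrac14e^{-2t}$, while $\varphi(t)$ decays like $e^{-t^2/2}$, so
\[
\int\frac{A^2}{\varphi}\,dt=\infty .
\]
Fatou's lemma then gives $\liminf_{\theta\downarrow0}\mathcal V(\theta)\ge\int A^2/\varphi=\infty$. Alternatively, one can bound $f_\theta\le\varphi+\tfrac\theta2e^{-|t|}$ and integrate over the region where $\varphi\le\theta e^{-|t|}$, i.e.\ $|t|\gtrsim\sqrt{2\log(1/\theta)}$. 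On that region the integrand is of order $e^{-2|t|}/(\theta e^{-|t|})=e^{-|t|}/\theta$, and integrating gives a lower bound of order $\theta^{-1}e^{-\sqrt{2\log(1/\theta)}}\to\infty$; this route is only needed if a quantitative rate is wanted. Finally, choose $\theta_0$ so that $\mathcal V\ge1$ on $(0,\theta_0]$. On $[\theta_0,1]$, $\mathcal V$ is continuous and positive on a compact set, so it attains a positive minimum. Combining the two regions yields $\mathcal V_{\min}>0$.

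\textbf{Main obstacle.} The only delicate point is checking that the tail domination is legitimate, i.e.\ the bound $|A(t)|\le L_1e^{-|t|}$. This follows directly from the definition of $L_1$ in the proof of Lemma~\ref{lem:theta-sensitivity}.
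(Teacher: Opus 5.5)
Your proposal is correct and follows the paper's proof essentially step for step: the change of variables $t=M_\theta(X)$ for the integral formula, dominated convergence on $[\theta_0,1]$ for continuity, $A\not\equiv0$ for positivity, Fatou's lemma with $A^2/\varphi\to\infty$ for the blow-up as $\theta\downarrow0$, and compactness of $[\theta_0,1]$ for $\mathcal V_{\min}>0$. Your explicit dominating function $2L_1^2e^{-|t|}/\theta_0$, built from $|A(t)|\le L_1e^{-|t|}$, spells out what the paper leaves implicit when it says the integrand is ``dominated locally uniformly''.
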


\begin{proof}
The change of variables $t=M_\theta(x)$ pushes $X\sim N(0,1)$ forward with density
$f_\theta$, so $\mathbb E[(A(t)/f_\theta(t))^2]=\int A(t)^2f_\theta(t)^{-1}\,dt$, giving the
identity. The integrand is continuous in $\theta$ for each $t$ and dominated locally
uniformly since $f_\theta(t)\ge\tfrac\theta2e^{-|t|}>0$, and is not a.e.\ zero since
$A\not\equiv0$, so $\mathcal V$ is continuous and strictly positive on $(0,1]$. As
$\theta\downarrow0$, $f_\theta(t)\to\varphi(t)$ pointwise, so Fatou's lemma gives
$\liminf_{\theta\downarrow0}\mathcal V(\theta)\ge\int A(t)^2/\varphi(t)\,dt=+\infty$,
because $A(t)^2\varphi(t)^{-1}\sim\tfrac{\sqrt{2\pi}}4e^{t^2/2-2t}\to\infty$. Hence
$\mathcal V\ge\mathcal V(1)>0$ on $(0,\theta_0]$ for some $\theta_0\in(0,1)$, while
$\inf_{[\theta_0,1]}\mathcal V>0$ because a continuous strictly positive function attains a
positive minimum on a compact set.
\end{proof}

\subsection{Proof of Lemma~\ref{lem:mtheta-growth}} \label{app:proof-mtheta-growth}

\begin{proof}
By oddness it suffices to take $x\ge0$, hence $t:=M_\theta(x)\ge0$, since
$\bar F_\theta(t)=\bar\Phi(x)$ and $\bar F_\theta$ is strictly decreasing. Both $\Phi$ and $F_\nu$ have everywhere-positive densities. It therefore suffices to exhibit $t_0\ge0$ with
$\bar F_\theta(t_0)\le\bar\Phi(x)$, which then forces $t\le t_0$. (Bounding $\bar F_\theta$
below by a multiple of $\bar\Phi(x)$ would instead bound $t$ from below; the inequality
needed here runs the other way.)

There is an absolute constant $c_0\in(0,\tfrac12]$ with $\bar\Phi(y)\ge c_0e^{-y^2}$ for every $y\ge0$. The ratio $\bar\Phi(y)e^{y^2}$ is continuous and positive on $[0,\infty)$,
equals $\tfrac12$ at $y=0$, and $\to\infty$ as $y\to\infty$ (since $\bar\Phi(y)\sim
\varphi(y)/y$), hence is bounded away from $0$ throughout. Fix
$K:=1+\sqrt{2\log(2/c_0)}+\log(2/c_0)$ and set
\[
t_0:=x^2+\log(1/\theta)+K.
\]

Gaussian piece. Using $\sqrt{p+q}\le\sqrt p+\sqrt q$ for $p,q\ge0$,
$\sqrt{2x^2+2\log(2/c_0)}\le\sqrt2\,x+\sqrt{2\log(2/c_0)}\le x^2+1+\sqrt{2\log(2/c_0)}\le
x^2+K$, the middle step because $x^2-\sqrt2\,x+1=(x-\tfrac1{\sqrt2})^2+\tfrac12>0$ for
every $x$. Hence $t_0\ge x^2+K\ge\sqrt{2x^2+2\log(2/c_0)}$, so
$t_0^2\ge2x^2+2\log(2/c_0)$, and with the standard bound $\bar\Phi(t_0)\le\tfrac12e^{-t_0^2/2}$,
\[
(1-\theta)\bar\Phi(t_0)\ \le\ \bar\Phi(t_0)\ \le\ \tfrac12e^{-t_0^2/2}
\ \le\ \tfrac12\cdot\tfrac{c_0}2e^{-x^2}\ \le\ \tfrac14\bar\Phi(x).
\]

Laplace piece. Since $K\ge\log(2/c_0)$, $t_0\ge x^2+\log(1/\theta)+\log(2/c_0)$,
so
\[
\theta\bar F_\nu(t_0)=\tfrac\theta2e^{-t_0}
\le\tfrac\theta2\cdot e^{-x^2}\cdot\theta\cdot\tfrac{c_0}2
=\tfrac{\theta^2c_0}4e^{-x^2}
\le\tfrac{c_0}4e^{-x^2}
\le\tfrac14\bar\Phi(x),
\]
using $\theta\le1$ in the last inequality but one.

Summing the two pieces, $\bar F_\theta(t_0)=(1-\theta)\bar\Phi(t_0)+\theta\bar F_\nu(t_0)
\le\tfrac14\bar\Phi(x)+\tfrac14\bar\Phi(x)=\tfrac12\bar\Phi(x)\le\bar\Phi(x)$, so
$t=M_\theta(x)\le t_0=x^2+\log(1/\theta)+K$. Since $x,t\ge0$, one has
$|M_\theta(x)-x|=|t-x|\le\max(t,x)\le t_0$, using $t_0\ge x^2+K\ge x$, so the same bound
controls the displacement itself. This is the claim with $C_1:=\max(1,K)$.
\end{proof}

\section{Proof of the oracle inequality}
\label{sup:proofs-general}
\label{appendix_a1}

\begin{proof}
For every $\delta\in(0,1)$, we show that with probability at least $1-\delta-\eta_N-\zeta_N$,
\[
\|\widehat T_N - T\|_{L^2(\pi_0;\mathcal H)}
\ \le\
C_g\,(1+\sigma)\kappa_NL_N\sqrt{\frac{p_N}{N\delta}} + \|T_N^*-T\|_{L^2(\pi_0;\mathcal H)},
\]
which is the stated bound, with $C_g$ and $\zeta_N$ from Assumption~\ref{ass:grad-conc}. The approximation term $\|T_N^*-T\|_{L^2(\pi_0;\mathcal H)}$ is kept in this form throughout the proof.

By the triangle inequality,
\begin{equation}\label{eq:triangle-decomp}
\|\widehat T_N - T\|_{L^2(\pi_0;\mathcal H)}
\le
\|T_{\hat\theta_N,N}-T_{\theta_N^*,N}\|_{L^2(\pi_0;\mathcal H)}
+
\|T_{\theta_N^*,N}-T\|_{L^2(\pi_0;\mathcal H)}.
\end{equation}
For the first term in \eqref{eq:triangle-decomp}, Assumption~\ref{ass:lipschitz-map} yields
\begin{equation}\label{eq:lipschitz-term}
\|T_{\hat\theta_N,N}-T_{\theta_N^*,N}\|_{L^2(\pi_0;\mathcal H)}
\le
L_N\|\hat\theta_N-\theta_N^*\|.
\end{equation}
The second term in \eqref{eq:triangle-decomp} is $\|T_N^*-T\|_{L^2(\pi_0;\mathcal H)}$ by definition. The identity $\mathbb E[\widehat R_N^\circ(\theta)]=R_N(\theta)-R_N(\theta_0)$ from Section~\ref{sup:foundations} connects the empirical criterion to the population risk. Combining \eqref{eq:triangle-decomp} and \eqref{eq:lipschitz-term} gives
\begin{equation}\label{eq:prebound}
\|\widehat T_N-T\|_{L^2(\pi_0;\mathcal H)}
\le
L_N\|\hat\theta_N-\theta_N^*\| + \|T_N^*-T\|_{L^2(\pi_0;\mathcal H)}.
\end{equation}
We next bound $\|\hat\theta_N-\theta_N^*\|$ using the basic inequality for the empirical minimizer.

Let $\mathcal{E}_N:=\{\hat\theta_N\in\mathcal{N}_N\}$ denote the event that the estimator $\hat\theta_N$ lies in the neighborhood $\mathcal{N}_N$. By Assumption~\ref{ass:localization}, $\mathbb{P}(\mathcal{E}_N^c)\le\eta_N$. Since $\hat\theta_N\in\arg\min_{\theta\in\Theta_N}\widehat R_N^\circ(\theta)$,
\begin{equation}\label{eq:basic-ineq}
\widehat R_N^\circ(\hat\theta_N)\ \le\ \widehat R_N^\circ(\theta_N^*),
\end{equation}
by definition of $\hat\theta_N$ as a minimizer over $\Theta_N$. Writing $\Delta_N(\theta):=(\widehat R_N^\circ-R_N)(\theta)-(\widehat R_N^\circ-R_N)(\theta_N^*)$,
so that $\Delta_N(\theta_N^*)=0$, \eqref{eq:basic-ineq} rearranges to
\[
R_N(\hat\theta_N)-R_N(\theta_N^*)
\ \le\
-\Delta_N(\hat\theta_N)
\ \le\
|\Delta_N(\hat\theta_N)|.
\]
On $\mathcal E_N$, both $\theta_N^*$ and $\hat\theta_N$ lie in the convex set
$\mathcal N_N$ (a Euclidean ball), so the segment between them does too, and the mean
value theorem applied to $\Delta_N$ along this segment gives
\begin{equation}\label{eq:mvt-delta}
|\Delta_N(\hat\theta_N)|
\ \le\
\sup_{\theta\in\mathcal N_N}\big\|\nabla\widehat R_N^\circ(\theta)-\nabla R_N(\theta)\big\|\cdot\|\hat\theta_N-\theta_N^*\|.
\end{equation}
Combining, on $\mathcal E_N$,
\begin{equation}\label{eq:growth-upper}
R_N(\hat\theta_N)-R_N(\theta_N^*)
\ \le\
\sup_{\theta\in\mathcal N_N}\big\|\nabla\widehat R_N^\circ(\theta)-\nabla R_N(\theta)\big\|\cdot\|\hat\theta_N-\theta_N^*\|.
\end{equation}
On the other hand, the quadratic growth \eqref{eq:quad-growth} of $R_N$ applies on
$\mathcal N_N$, since $\theta_N^*$ is an interior minimizer of $R_N$. Hence, on
$\mathcal E_N$,
\begin{equation}\label{eq:growth-lower}
\frac{\mu_N}2\|\hat\theta_N-\theta_N^*\|^2\ \le\ R_N(\hat\theta_N)-R_N(\theta_N^*).
\end{equation}
Combining \eqref{eq:growth-upper} and \eqref{eq:growth-lower}, on $\mathcal E_N$,
\[
\frac{\mu_N}2\|\hat\theta_N-\theta_N^*\|^2
\ \le\
\sup_{\theta\in\mathcal N_N}\big\|\nabla\widehat R_N^\circ(\theta)-\nabla R_N(\theta)\big\|\cdot\|\hat\theta_N-\theta_N^*\|,
\]
If $\|\hat\theta_N-\theta_N^*\|>0$, divide by this norm. If it is zero, the bound below is trivial. Thus
\begin{equation}
\|\hat\theta_N-\theta_N^*\|
\ \le\
\frac2{\mu_N}
\sup_{\theta\in\mathcal{N}_N}\big\|\nabla \widehat R_N^\circ(\theta)-\nabla R_N(\theta)\big\|,
\label{eq:theta-by-grad}
\end{equation}
on $\mathcal E_N$. Fix $\delta\in(0,1)$. By Assumption~\ref{ass:grad-conc}, the event
\[
\begin{aligned}
\mathcal F_N(\delta):=&\bigg\{\sup_{\theta\in\mathcal{N}_N}\big\|\nabla \widehat R_N^\circ(\theta)-\nabla R_N(\theta)\big\|
\\
&\le C_g\,(1+\sigma)L_N\sqrt{\tfrac{p_N}{N\delta}}\bigg\}
\end{aligned}
\]
has $\mathbb P(\mathcal F_N(\delta)^c)\le\delta+\zeta_N$. On $\mathcal E_N\cap\mathcal F_N(\delta)$,
\eqref{eq:theta-by-grad} gives
\begin{equation}\label{eq:param-rate}
\|\hat\theta_N-\theta_N^*\|
\ \le\
2C_g\,(1+\sigma)\frac{L_N}{\mu_N}\sqrt{\frac{p_N}{N\delta}}
\ =\
2C_g\,(1+\sigma)\kappa_N\sqrt{\frac{p_N}{N\delta}}.
\end{equation}
By the union bound, $\mathbb P\big((\mathcal E_N\cap\mathcal F_N(\delta))^c\big)
\le\mathbb P(\mathcal E_N^c)+\mathbb P(\mathcal F_N(\delta)^c)\le\eta_N+\delta+\zeta_N$, so
\eqref{eq:param-rate} holds with probability at least $1-\delta-\eta_N-\zeta_N$.

Substituting \eqref{eq:param-rate} into \eqref{eq:prebound}, on the same event
$\mathcal E_N\cap\mathcal F_N(\delta)$,
\[
\begin{aligned}
\|\widehat T_N-T\|_{L^2(\pi_0;\mathcal H)}
&\le
L_N\|\hat\theta_N-\theta_N^*\|+\|T_N^*-T\|_{L^2(\pi_0;\mathcal H)}
\\
&\le
2C_g\,(1+\sigma)\kappa_NL_N\sqrt{\frac{p_N}{N\delta}}+\|T_N^*-T\|_{L^2(\pi_0;\mathcal H)},
\end{aligned}
\]
with probability at least $1-\delta-\eta_N-\zeta_N$. Taking $C:=2C_g$ gives the theorem.
\end{proof}

\section{Proof of the minimax lower bound}
\label{sup:proofs-minimax}

\subsection{Proof of Lemma~\ref{lem:hypothesis}} \label{app:proof-hypothesis}

\begin{proof}
Throughout index the band by $k=d+1,\dots,2d$.

(i) Since $\omega_k\in\{0,1\}$ implies
$\omega_k^2=\omega_k$, Definition~\ref{def:reg-class} together with $\|v_k\|_{L^2(\pi_0)}=1$
gives
\[
\|T_\omega\|_{\mathcal W^s}^2
=\tau^2\sum_{k=d+1}^{2d}k^{2s}\omega_k
\le\tau^2(2d)^{2s}d=2^{2s}\tau^2d^{2s+1}.
\]
using $k\le2d$ on the band and $\sum_{k=d+1}^{2d}\omega_k\le d$. There are $d$ terms, each taking a value in $\{0,1\}$. Taking square roots gives $\|T_\omega\|_{\mathcal W^s}\le2^s\tau\,d^{s+1/2}$,
i.e.\ (i) holds with $C_1=2^s$, for every $\omega\in\{0,1\}^d$, hence in particular
for $\omega\in\Omega$.

(ii) By the Varshamov--Gilbert bound
\citep[Lemma~2.9]{tsybakov2009introduction}, for $m\ge8$ there is a subset
$\Omega\subseteq\{0,1\}^d$ containing $\underline0:=(0,\ldots,0)$ with
\[
|\Omega|\ge2^{m/8}
\qquad\text{and}\qquad
d_H(\omega,\omega')\ge d/8\quad\text{for all distinct }\omega,\omega'\in\Omega,
\]
$d_H$ denoting Hamming distance. In particular $\log|\Omega|\ge(d/8)\log2=:cd$ with
$c=\log2/8$, which is the claimed cardinality bound. Since $\{e_k\}$ is orthonormal in
$\mathcal H$ and $(\omega_k-\omega_k')^2=\mathbb 1\{\omega_k\ne\omega_k'\}$ for
$\omega_k,\omega_k'\in\{0,1\}$,
\[
\|T_\omega-T_{\omega'}\|_{L^2(\pi_0;\mathcal H)}^2
=\mathbb E_{\pi_0}\Big[\Big\|\tau\!\!\sum_{k=d+1}^{2d}\!\!(\omega_k-\omega_k')v_k\,e_k\Big\|_{\mathcal H}^2\Big]
=\tau^2\!\!\sum_{k=d+1}^{2d}\!\!(\omega_k-\omega_k')^2\|v_k\|_{L^2(\pi_0)}^2
=\tau^2\,d_H(\omega,\omega').
\]
For distinct $\omega,\omega'\in\Omega$ this is $\ge\tau^2m/8=\tau^2d/8$, giving (ii)
with $c_2=1/8$.

(iii) Fix $\omega,\omega'\in\{0,1\}^d$ and consider
one pair $(f_0,f_1)$ generated according to Assumption~\ref{ass:data} with population map
$T_\omega$ or $T_{\omega'}$. Conditionally on $f_0=f$, write the standardized coordinates of $f_1-f$ as in
\eqref{eq:Dik},
\[
  D_k:=\frac{\langle f_1-f,\varphi_k\rangle_{\mathcal F}}{\sqrt{\lambda_k}},
  \qquad k\ge1,
\]
which are ordinary real random variables; the Cameron--Martin bracket
$\langle f_1-f,e_k\rangle_{\mathcal H}$ is not available here, since
$f_1-f=T_\omega(f)-f+\xi$ and $\xi\notin\mathcal H$ almost surely. For $k$ outside the band
$\{d+1,\ldots,2d\}$, $T_\omega(f)=T_{\omega'}(f)=f$, so this coordinate reduces to the noise
coordinate $\xi_k$ of \eqref{eq:noise-coords}, which is $N(0,\sigma^2)$ under both
hypotheses, independently of $f$ and of the band coordinates. On the band, writing
$\xi_{\mathrm{band}}:=(\xi_{d+1},\ldots,\xi_{2d})\sim N(0,\sigma^2I_d)$,
\[
\big(D_k\big)_{k=d+1}^{2d}
=\tau\,\omega\odot v(f)+\xi_{\mathrm{band}}
\quad\big(\text{resp.\ }\tau\,\omega'\odot v(f)+\xi_{\mathrm{band}}\big)
\]
under $T_\omega$ and $T_{\omega'}$, respectively, where $v(f):=(v_{d+1}(f),\ldots,v_{2d}(f))$ and
$\odot$ is the coordinatewise product. Thus, conditionally on $f_0=f$, the two hypotheses
induce $N(\tau\,\omega\odot v(f),\sigma^2I_d)$ and $N(\tau\,\omega'\odot v(f),\sigma^2I_d)$
on the band coordinates, and the same law on the complementary coordinates,
independently of the band. Since $\mathrm{KL}(P_1\otimes Q\,\|\,P_2\otimes Q)=\mathrm{KL}(P_1\|P_2)$
for any common factor $Q$, and since the shift $\tau(\omega-\omega')\odot v(f)$ lies in the
finite-dimensional, hence Cameron--Martin, span of $e_{d+1},\ldots,e_{2d}$, so that the two
conditional laws are mutually absolutely continuous, the conditional KL divergence reduces to
the standard Gaussian shift formula,
\[
\mathrm{KL}\big(P_{T_\omega}(\cdot\mid f)\,\|\,P_{T_{\omega'}}(\cdot\mid f)\big)
=\frac{1}{2\sigma^2}\big\|\tau(\omega-\omega')\odot v(f)\big\|_2^2
=\frac{\tau^2}{2\sigma^2}\sum_{k=d+1}^{2d}(\omega_k-\omega_k')^2v_k(f)^2 .
\]
Since the $f_0$-marginal is $\pi_0$ under both hypotheses, integrating over $f\sim\pi_0$
gives the unconditional KL divergence between the one-pair laws,
\begin{align*}
\mathrm{KL}\big(P_{T_\omega}\|P_{T_{\omega'}}\big)
&=\mathbb E_{\pi_0}\!\left[
\frac{\tau^2}{2\sigma^2}
\sum_{k=d+1}^{2d}(\omega_k-\omega_k')^2v_k(f_0)^2
\right]\\
&=\frac{\tau^2}{2\sigma^2}
\sum_{k=d+1}^{2d}(\omega_k-\omega_k')^2
\|v_k\|_{L^2(\pi_0)}^2\\
&=\frac{\tau^2}{2\sigma^2}\,d_H(\omega,\omega')
\le\frac{\tau^2d}{2\sigma^2}.
\end{align*}
using $\|v_k\|_{L^2(\pi_0)}=1$ and $d_H(\omega,\omega')\le d$. Finally, since the $N$
pairs $(f_0^{(i)},f_1^{(i)})$ are i.i.d.\ under Assumption~\ref{ass:data}, KL divergence
tensorizes,
\[
\mathrm{KL}\big(P_{T_\omega}^{\otimes N}\|P_{T_{\omega'}}^{\otimes N}\big)
=N\,\mathrm{KL}\big(P_{T_\omega}\|P_{T_{\omega'}}\big)
\le\frac{N\tau^2d}{2\sigma^2},
\]
which is (iii).
\end{proof}

\subsection{Proof of Theorem~\ref{thm:minimax}} \label{app:proof-minimax}

\begin{proof}
Fix $\alpha:=1/16\in(0,1/8)$. For $d\ge16$, the Varshamov--Gilbert bound
$|\Omega|\ge2^{d/8}$ underlying Lemma~\ref{lem:hypothesis}(ii) gives, for $M:=|\Omega|-1$,
the number of non-reference hypotheses with $\Omega\ni\underline0$,
\[
M\ge2^{d/8}-1\ge2^{d/8-1}\ge2^{d/16},
\qquad\text{so}\qquad
\log M\ge\frac{d}{16}\log2=:c_0\,d,
\qquad c_0:=\frac{\log2}{16},
\]
using $2^x-1\ge2^{x-1}$ for $x\ge1$ and $d/8-1\ge d/16$ for $d\ge16$.

Set
\[
\tau^2:=\tau_N^2:=\frac{2\alpha c_0\,\sigma^2}{N},
\qquad
d:=d_N:=\Big\lfloor\big(B/(C_1\tau_N)\big)^{1/(s+1/2)}\Big\rfloor,
\]
with $C_1=2^s$ from Lemma~\ref{lem:hypothesis}(i). By construction $C_1\tau_Nd_N^{s+1/2}\le B$,
so Lemma~\ref{lem:hypothesis}(i) places every $T_\omega$, $\omega\in\Omega$, in
$\mathcal W^s(B)$. Since $\tau_N\asymp\sigma N^{-1/2}$ and
$(s+1/2)/(2s+1)=\tfrac12$ exactly, $d_N\asymp(B/\tau_N)^{2/(2s+1)}\asymp(B^2N/\sigma^2)^{1/(2s+1)}\to\infty$
as $N\to\infty$. Fix $N_1$ such that $d_N\ge16$ for all $N\ge N_1$, so that $\log M\ge c_0d_N$
holds from $N_1$ on.

By Lemma~\ref{lem:hypothesis}(iii) with
$\omega'=\underline0$. In this case $T_{\underline0}=I$. Therefore every $\omega\in\Omega\setminus\{\underline0\}$
satisfies
\[
\mathrm{KL}\big(P_{T_\omega}^{\otimes N}\,\|\,P_{T_{\underline0}}^{\otimes N}\big)
\le\frac{N\tau_N^2d_N}{2\sigma^2}=\alpha c_0d_N\le\alpha\log M,
\]
so the average over $\omega\in\Omega\setminus\{\underline0\}$ satisfies the same
bound, verifying condition (ii) of the generalized Fano method
\citep[Theorem~2.5]{tsybakov2009introduction} with this $\alpha\in(0,1/8)$. By
Lemma~\ref{lem:hypothesis}(ii), for all distinct $\omega,\omega'\in\Omega$,
\[
\|T_\omega-T_{\omega'}\|_{L^2(\pi_0;\mathcal H)}\ge\tau_N\sqrt{d_N/8}=:2\rho_N,
\qquad
\rho_N:=\tfrac12\tau_N\sqrt{d_N/8},
\]
which verifies condition (i) of the same method with separation $2\rho_N$ in the distance
$d(T,T'):=\|T-T'\|_{L^2(\pi_0;\mathcal H)}$. Enumerating $\Omega=\{\omega^{(0)},\ldots,\omega^{(M)}\}$
with $\omega^{(0)}=\underline0$, Theorem~2.5 of \citet{tsybakov2009introduction} gives
\[
\inf_{\widehat T}\ \max_{0\le j\le M}\
\mathbb P_{T_{\omega^{(j)}}}\Big(\big\|\widehat T-T_{\omega^{(j)}}\big\|_{L^2(\pi_0;\mathcal H)}\ge\rho_N\Big)
\ \ge\
\frac{\sqrt M}{1+\sqrt M}\Big(1-2\alpha-\sqrt{\tfrac{2\alpha}{\log M}}\Big),
\]
the infimum over all estimators measurable in the $N$ paired samples. Since $M\to\infty$ as
$N\to\infty$ and $\alpha=1/16$, the right side tends to $1-2\alpha=7/8$. Fix $N_2\ge N_1$ so
that it exceeds $q:=1/2$ for all $N\ge N_2$.

This testing bound converts into a bound on the estimation risk as follows. For every estimator $\widehat T$ and
every $j$, Markov's inequality gives
\[
\mathbb E_{T_{\omega^{(j)}}}\big\|\widehat T-T_{\omega^{(j)}}\big\|_{L^2(\pi_0;\mathcal H)}
\ge\rho_N\,\mathbb P_{T_{\omega^{(j)}}}\Big(\big\|\widehat T-T_{\omega^{(j)}}\big\|_{L^2(\pi_0;\mathcal H)}\ge\rho_N\Big),
\]
and since every $T_{\omega^{(j)}}\in\mathcal W^s(B)$,
\[
\sup_{T\in\mathcal W^s(B)}\mathbb E_T\|\widehat T-T\|_{L^2(\pi_0;\mathcal H)}
\ge\max_{0\le j\le M}\mathbb E_{T_{\omega^{(j)}}}\big\|\widehat T-T_{\omega^{(j)}}\big\|_{L^2(\pi_0;\mathcal H)}
\ge\rho_N\max_{0\le j\le M}\mathbb P_{T_{\omega^{(j)}}}\big(\cdots\ge\rho_N\big).
\]
Taking $\inf_{\widehat T}$ and combining with the Fano bound above,
\[
\inf_{\widehat T}\sup_{T\in\mathcal W^s(B)}\mathbb E_T\|\widehat T-T\|_{L^2(\pi_0;\mathcal H)}
\ge q\,\rho_N,\qquad N\ge N_2 .
\]

It remains to evaluate $\rho_N$. By construction $\tau_N^2=2\alpha c_0\sigma^2/N$
and $d_N\asymp(B/(C_1\tau_N))^{2/(2s+1)}$, using $1/(s+1/2)=2/(2s+1)$; the floor changes
$d_N$ by at most one unit and hence $\rho_N$ by at most a bounded factor once
$d_N\ge16$, which the symbol $\asymp$ absorbs. Working with the unfloored value, hence
\[
\rho_N^2=\frac{\tau_N^2d_N}{32}
=\frac{\tau_N^2}{32}\Big(\frac{B}{C_1\tau_N}\Big)^{2/(2s+1)}
=\frac{1}{32}\Big(\frac{B}{C_1}\Big)^{2/(2s+1)}\tau_N^{2-2/(2s+1)}
=\frac{1}{32}\Big(\frac{B}{C_1}\Big)^{2/(2s+1)}\tau_N^{4s/(2s+1)} ;
\]
since $\tau_N^2=2\alpha c_0\sigma^2/N$, $\tau_N^{4s/(2s+1)}=(2\alpha c_0)^{2s/(2s+1)}(\sigma^2/N)^{2s/(2s+1)}$,
so
\[
\rho_N^2=\frac{(2\alpha c_0)^{2s/(2s+1)}}{32}\Big(\frac{B}{C_1}\Big)^{2/(2s+1)}\Big(\frac{\sigma^2}{N}\Big)^{2s/(2s+1)},
\]
i.e.\ $\rho_N=c_1\big(\sigma^2/N\big)^{s/(2s+1)}$ for a constant
$c_1=c_1(s,B)>0$ not depending on $N$ or $\sigma$. The floor correction is absorbed into a bounded factor for $N\ge N_2$. Combining with the previous display,
\[
\inf_{\widehat T}\sup_{T\in\mathcal W^s(B)}\mathbb E_T\|\widehat T-T\|_{L^2(\pi_0;\mathcal H)}
\ge q\,c_1\Big(\frac{\sigma^2}{N}\Big)^{s/(2s+1)},\qquad N\ge N_2 .
\]
This proves the theorem with $c:=qc_1$ and $N_0:=N_2$.

For the diagonal subclass, take $\pi_0=\gamma$ as in Definition~\ref{def:diag-class}, so
that $\|\hat e_k\|_{L^2(\pi_0)}=1$ and $c_k=1$. The hypothesis family then already lies
inside the subclass, since the coordinate fields are
$u_k^\omega=\tau\omega_kv_k=\tau\omega_k\hat e_k$, so
$a_k=\tau\omega_k\ge0\ge\underline a$ for $k\in\{d+1,\ldots,2d\}$ and $a_k=0$
elsewhere, so the floor in Definition~\ref{def:diag-class} is slack for every
hypothesis, whence
$\{T_\omega:\omega\in\Omega\}\subset\mathcal W_{\mathrm{diag}}^s(B)\subset\mathcal W^s(B)$
by Lemma~\ref{lem:hypothesis}(i). The argument above uses only the norm bound, pairwise
separation, and Kullback--Leibler bound of Lemma~\ref{lem:hypothesis}, never a property
of the ambient class beyond its containing $\{T_\omega\}$, so it lower-bounds the
diagonal-subclass minimax risk verbatim and at the same rate.
\end{proof}

\section{Proofs for the worked examples}
\label{sup:proofs-examples}

\subsection{Conjugate Gaussian inverse problem}
\label{sup:gaussian-proofs}

\subsubsection{Transport map and bias in the conjugate Gaussian example} \label{app:proof-gaussian-bias}

\begin{proof}
Let $X\sim\pi_0$ and let $\{\varphi_k\}$ be the common eigenbasis of the commuting
operators $C_0,C_1$, so that $X_k:=\langle X,\varphi_k\rangle_{\mathcal F}$ are
independent $N(0,\lambda_k^{(0)})$. If $T(x)=Ax$ is diagonal in this basis with
$A\varphi_k=a_k\varphi_k$, then $\mathrm{Var}(\langle T(X),\varphi_k\rangle_{\mathcal F})
=a_k^2\lambda_k^{(0)}$, which must equal $\lambda_k^{(1)}$ for $T$ to push $\pi_0$
forward to $\pi_1$. This gives $a_k=\pm(\lambda_k^{(1)}/\lambda_k^{(0)})^{1/2}$, and
matching variances alone does not select the sign. The positive root is the optimal one:
each coordinate is a one-dimensional transport between $N(0,\lambda_k^{(0)})$ and
$N(0,\lambda_k^{(1)})$, whose $W_2$-optimal map is the nondecreasing one,
$x\mapsto+(\lambda_k^{(1)}/\lambda_k^{(0)})^{1/2}x$. Summing the coordinatewise $W_2^2$ lower bounds by Tonelli, as in the proof of Proposition~\ref{prop:two-groups-population}, shows that the coordinatewise monotone map attains the infimum over all couplings. Hence \eqref{eq:gauss-OT-diagonal} is the optimal map. By definition of $T_d$,
$T_d(x)-T(x)=\sum_{k>d}(1-a_k)\langle x,\varphi_k\rangle_{\mathcal F}\varphi_k$, which is
\eqref{eq:Td-minus-T}. Since $\gamma=\pi_0$ gives
$\langle\varphi_j,\varphi_k\rangle_{\mathcal H}=\delta_{jk}/\lambda_k^{(0)}$, the factors
$\lambda_k^{(0)}$ from $\mathbb E[\langle X,\varphi_k\rangle_{\mathcal F}^2]$ and
$|\varphi_k|_{\mathcal H}^2$ cancel term by term, leaving
$\|T_d-T\|_{L^2(\pi_0;\mathcal H)}^2=\sum_{k>d}(1-a_k)^2$, which proves
\eqref{eq:explicit-bias}.

The sieve class contains the canonical truncation map. By
\eqref{eq:sieve-map-linear}, for $B=B^\ast=\mathrm{diag}(a_1-1,\ldots,a_{d_N}-1)$,
\[
\big(B^\ast\Phi_N(f)\big)_k=(a_k-1)\,\hat e_k(f)=(a_k-1)\,\frac{\langle f,\varphi_k\rangle_{\mathcal F}}{\sqrt{\lambda_k^{(0)}}},
\]
so
\[
T_{B^\ast,N}(f)
=
f+\sum_{k=1}^{d_N}(a_k-1)\,\frac{\langle f,\varphi_k\rangle_{\mathcal F}}{\sqrt{\lambda_k^{(0)}}}\;\sqrt{\lambda_k^{(0)}}\,\varphi_k
=
f+\sum_{k=1}^{d_N}(a_k-1)\langle f,\varphi_k\rangle_{\mathcal F}\,\varphi_k :
\]
the factor $\sqrt{\lambda_k^{(0)}}$ from $e_k=\sqrt{\lambda_k^{(0)}}\varphi_k$ in
\eqref{eq:sieve-map-linear} exactly cancels the $1/\sqrt{\lambda_k^{(0)}}$ from the
normalization of $\Phi_N$, so the $\lambda_k^{(0)}$-dependence drops out and
\[
T_{B^\ast,N}(f)
=
\sum_{k=1}^{d_N}a_k\langle f,\varphi_k\rangle_{\mathcal F}\,\varphi_k
+
\sum_{k>d_N}\langle f,\varphi_k\rangle_{\mathcal F}\,\varphi_k
=
T_{d_N}(f).
\]
Therefore,
\[
\inf_{\theta\in\Theta_N}\|T_{\theta,N}-T\|_{L^2(\pi_0;\mathcal H)}
\le
\|T_{B^\ast,N}-T\|_{L^2(\pi_0;\mathcal H)}
=
\|T_{d_N}-T\|_{L^2(\pi_0;\mathcal H)}.
\]
Combining this with \eqref{eq:explicit-bias} gives \eqref{eq:explicit-bias-final}.
\end{proof}

\subsubsection{Proof of Proposition~\ref{prop:gaussian-verify}} \label{app:proof-gaussian-verify}

\begin{proof}
For $\mu_N=1$: here $\partial_kg_\theta(z)=\theta_kz_k$, so
$J_\theta(z)=\mathrm{diag}(z_1,\ldots,z_{d_N})$ does not depend on $\theta$ at all, the
zero-curvature case of Assumption~\ref{ass:sieve-primitive}(B2), and
\[
G_N(\theta)=\mathbb E_{\pi_0}\big[J_\theta(\Phi_N(f_0))^\top J_\theta(\Phi_N(f_0))\big]
=\mathrm{diag}\big(\mathbb E[\hat e_1(f_0)^2],\ldots,\mathbb E[\hat e_{d_N}(f_0)^2]\big)=I_{d_N},
\]
since each $\hat e_k(f_0)\sim N(0,1)$ under $\pi_0$. Since $g_\theta$ is affine in $\theta$,
the curvature term of Assumption~\ref{ass:sieve-primitive}(B2) vanishes identically, so
$H_{R_N}(\theta)=2G_N(\theta)=2I_{d_N}$ at every $\theta$ and the quadratic growth
\eqref{eq:quad-growth} holds on all of $\Theta_N$ with $\mu_N=1$.

For $L_N=1$, the quadratic sieve gives
$T_{\theta,N}(f)-T_{\theta',N}(f)=\sum_{k\le d_N}(\theta_k-\theta_k')\hat e_k(f)e_k$, and hence
\[
\|T_{\theta,N}-T_{\theta',N}\|_{L^2(\pi_0;\mathcal H)}^2
=\sum_{k\le d_N}(\theta_k-\theta_k')^2\,\mathbb E[\hat e_k(f_0)^2]
=\|\theta-\theta'\|^2.
\]
Thus Assumption~\ref{ass:lipschitz-map} holds with $L_N=1$. The pointwise envelope in
Assumption~\ref{ass:sieve-primitive}(B3) is unbounded for this quadratic sieve, so we use
the direct $L^2$ calculation. For the block sieve of Section~\ref{subsec:block},
$\nabla\psi$ is bounded and the corresponding envelope is finite.

We next use the exact ERM solution to bound the parameter error and verify
Assumption~\ref{ass:localization}. The loss is quadratic and separable across coordinates.
Conditionally on the design, the coordinate errors are Gaussian.

Write
\[
Z_{ik}:=\hat e_k(f_0^{(i)})
=\frac{\langle f_0^{(i)},\varphi_k\rangle_{\mathcal F}}{\sqrt{\lambda_k}}
\stackrel{\mathrm{i.i.d.}}{\sim}N(0,1),
\qquad
\xi_{ik}:=\frac{\langle \xi^{(i)},\varphi_k\rangle_{\mathcal F}}{\sqrt{\lambda_k}}
\stackrel{\mathrm{i.i.d.}}{\sim}N(0,\sigma^2),
\]
with $\{\xi_{ik}\}$ independent of $\{Z_{ik}\}$. Since $g_\theta$ is quadratic and the
coordinates separate, $\widehat R_N^\circ(\theta)$ is, up to an
additive constant not depending on $\theta$, a sum over $k\le d_N$ of independent
one-dimensional least-squares criteria, in which coordinate $k$ minimizes
$\sum_i[(1+\theta_k)Z_{ik}-Y_{ik}]^2$ over $\theta_k\in[-2,2]$, where the normalized
ambient-space coordinate of the observed response is
\[
Y_{ik}:=\frac{\langle f_1^{(i)},\varphi_k\rangle_{\mathcal F}}{\sqrt{\lambda_k}}
=a_kZ_{ik}+\xi_{ik}.
\] The unconstrained minimizer is the ordinary
least-squares slope,
\[
\widetilde\theta_k:=\frac{\sum_iZ_{ik}Y_{ik}}{\sum_iZ_{ik}^2}-1
=\theta_{N,k}^*+\frac{\sum_iZ_{ik}\xi_{ik}}{\sum_iZ_{ik}^2},
\]
using $Y_{ik}=(1+\theta_{N,k}^*)Z_{ik}+\xi_{ik}$ with $\theta_{N,k}^*=a_k-1$. Since
$\theta_{N,k}^*\in[-1,0]\subset[-2,2]$ and projection onto the convex set $[-2,2]$ is
nonexpansive, the constrained minimizer $\widehat\theta_k:=\Pi_{[-2,2]}(\widetilde\theta_k)$ satisfies
\[
|\widehat\theta_k-\theta_{N,k}^*|
\ \le\ |\widetilde\theta_k-\theta_{N,k}^*|
\ =\ \Big|\frac{U_k}{V_k}\Big|,
\qquad
U_k:=\tfrac1N\textstyle\sum_iZ_{ik}\xi_{ik},
\qquad
V_k:=\tfrac1N\textstyle\sum_iZ_{ik}^2.
\]
By the standard chi-square lower tail bound, $\mathbb P(V_k<\tfrac12)\le e^{-cN}$ for an
absolute constant $c>0$, so by a union bound over $k\le d_N$,
\[
\mathbb P\Big(\min_{k\le d_N}V_k<\tfrac12\Big)\ \le\ d_Ne^{-cN}\ \to\ 0
\]
whenever $\log d_N=o(N)$, which holds under every sieve-dimension choice used in this
paper. On the event $\{\min_kV_k\ge\tfrac12\}$: conditionally on $\{Z_{ik}\}$, $U_k$ is
Gaussian with $\mathrm{Var}(U_k\mid Z)=\sigma^2V_k/N$, and $U_k/V_k$, $k=1,\ldots,d_N$,
are conditionally independent because the $\xi_{ik}$ are independent across $k$. Their
conditional variance $\sigma^2/(NV_k)\le2\sigma^2/N$. So, conditionally on this event,
$\sum_{k\le d_N}(U_k/V_k)^2$ is a weighted sum of independent squared Gaussians with
$\mathbb E\big[\sum_k(U_k/V_k)^2\mid Z\big]\le2\sigma^2d_N/N$. Since the conditional
covariance is deterministic given $Z$ on this event, Gaussian concentration for the
$1$-Lipschitz map $x\mapsto\|x\|$ applies, giving, for every
$\delta\in(0,1)$,
\begin{equation}
\label{eq:gaussian-exact-rate}
\mathbb P\Big(\|\widehat\theta_N-\theta_N^*\|>2\sigma\sqrt{\tfrac{2(d_N+\log(2/\delta))}N}\ \Big|\ Z\Big)
\ \le\ \delta
\qquad\text{on }\{\min_kV_k\ge\tfrac12\},
\end{equation}
and hence unconditionally, with probability $\ge1-\delta-d_Ne^{-cN}$,
\[
\|\widehat\theta_N-\theta_N^*\|\ \le\ C_5\,\sigma\sqrt{\tfrac{d_N+\log(1/\delta)}N}
\]
for an absolute constant $C_5$. This gives an unconditional bound on the parameter error of the global ERM over
$\Theta_N=[-2,2]^{d_N}$.

For the expected-risk bound \eqref{eq:gaussian-expected-risk}, conditionally on
the design $\widetilde\theta_k-\theta_{N,k}^*$ is centered Gaussian with variance
$\sigma^2/\sum_iZ_{ik}^2$, and $\sum_iZ_{ik}^2\sim\chi_N^2$ independently of the noise. Thus, for $N>2$,
\[
\mathbb E\big[(\widetilde\theta_k-\theta_{N,k}^*)^2\big]
=\sigma^2\,\mathbb E\big[1/\chi_N^2\big]=\frac{\sigma^2}{N-2}.
\]
Projection onto the parameter box is nonexpansive, so
$\mathbb E\|\widehat\theta_N-\theta_N^*\|^2\le\sigma^2d_N/(N-2)$. The same argument applies to the widened box of Section~\ref{subsec:gaussian}. The two error terms are orthogonal because $\widehat T_N-T^{(d_N)}$ is supported on coordinates $k\le d_N$ and $T^{(d_N)}-T$ on coordinates $k>d_N$. Since $T_{\theta_N^*,N}=T^{(d_N)}$,
\[
\mathbb E\big\|\widehat T_N-T\big\|_{L^2(\pi_0;\mathcal H)}^2
=\mathbb E\|\widehat\theta_N-\theta_N^*\|^2+\sum_{k>d_N}(a_k-1)^2
\le\frac{\sigma^2d_N}{N-2}+B^2d_N^{-2s},
\]
uniformly over $T\in\mathcal W^s_{\mathrm{diag}}(B)$. This proves
\eqref{eq:gaussian-expected-risk}. Since $L_N=1$,
$\|\widehat T_N-T_N^*\|_{L^2(\pi_0;\mathcal H)}
=\|\widehat\theta_N-\theta_N^*\|$.

For Assumption~\ref{ass:localization}, let
$r_N=\upsilon_N\sigma\sqrt{d_N/N}$ with $\upsilon_N\to\infty$ slowly enough that
$r_N<1/2$. When $d_N/N\to0$, such a sequence can be chosen so that
$\mathcal N_N=B(\theta_N^*,r_N)$ remains inside $\Theta_N$. Applying
\eqref{eq:gaussian-exact-rate} with, for example, $\delta_N=1/d_N$ gives
$\mathbb P(\widehat\theta_N\notin\mathcal N_N)\to0$.

Consequently $\kappa_N=L_N/\mu_N=1$ exactly, and, since $L_N=1$, $\kappa_NL_N=1$.
\end{proof}

\subsection{Nonlinear block-interaction class}
\label{app:block-proofs}

\subsubsection{Proof of Proposition~\ref{prop:block-population}}

\begin{proof}
The centered summands $a_j[\psi(\hat e_{B_j})-\mu_\psi]$ are independent with summable
variances. Their partial sums converge in $L^2(\gamma)$, and their gradients converge
in $L^2(\gamma;\mathcal H)$ because the squared norm of block $j$ is
$\varsigma^2a_j^2$. Closedness of the Cameron--Martin gradient gives
$\phi_a\in\mathbb D^{1,2}(\gamma)$ and the coefficient formula. The weighted bound
follows from $(2j-1)^{2s},(2j)^{2s}\le2^{2s}j^{2s}$.

The centered series also converges almost surely. On its convergence set, Taylor's
formula bounds the remainder in the increment on block $j$ by
$|a_j|\,|h_{B_j}|^2/2$. The linear increments are summable by Cauchy--Schwarz, since
$\sum_j a_j^2\|\nabla\psi(z_{B_j})\|^2<\infty$ and $h\in\mathcal H$.
Thus this set is invariant under Cameron--Martin shifts. The finite partial potentials
plus $|h|_{\mathcal H}^2/2$ are convex and converge pointwise on every such slice.
Their limit proves that $\phi_a$ has the required $1$-convex version.

Write $S_a(z)=z+a\nabla\psi(z)$. For $|a|\le1/2$, this is a smooth strongly monotone
bi-Lipschitz map and is optimal from $\gamma_2=N(0,I_2)$ to its pushforward $\nu_a$.
For any coupling of the infinite-dimensional marginals, Tonelli's theorem bounds the
cost below by the sum of the blockwise optimal costs. The map $T_a$ attains each of
these bounds. Uniqueness of the blockwise optimal couplings also gives uniqueness
of the full optimal coupling. Its cost is $\varsigma^2\sum_j a_j^2<\infty$.
The displacement is uniformly bounded in $\mathcal H$ by $(\sum_j a_j^2)^{1/2}$;
the continuous embedding into $\mathcal F$ also gives finite second moments.

Each $\nu_a$ has a positive smooth density. Change of variables and Gaussian integration
by parts give
\begin{align*}
\mathrm{KL}(\nu_a\|\gamma_2)
&=\mathbb E\left[a\langle Z,\nabla\psi(Z)\rangle
 +\tfrac{a^2}2\|\nabla\psi(Z)\|^2-\log\det(I_2+a\nabla^2\psi(Z))\right]\\
&\le \tfrac{a^2}2+2a^2\le3a^2,
\end{align*}
since $\mathbb E\langle Z,\nabla\psi(Z)\rangle=\mathbb E\operatorname{tr}\nabla^2\psi(Z)$
and $|\log\det(I+M)-\operatorname{tr}M|\le\|M\|_F^2$ for symmetric
$\|M\|_{\rm op}\le1/2$. Thus $\sum_jH^2(\nu_{a_j},\gamma_2)<\infty$.
Kakutani's criterion, with equivalence of every factor pair, gives $\pi_1\sim\gamma$.
The coordinate products are supported on $\{z:\sum_k\lambda_kz_k^2<\infty\}$ by the
second-moment bound, so this equivalence transfers to $\mathcal F$.

A nonzero block has a non-Gaussian target. The quadratic-cost
optimal map between nondegenerate Gaussian measures is affine. If $S_a$ were affine,
its bounded displacement would be constant; oddness would make that constant zero,
contrary to $a\ne0$ and $\nabla\psi\not\equiv0$.
The two coordinates of $S_a(Z)$ also have covariance
\[
-\frac{2a e^{-1}}3+a^2 C_{12},\qquad
C_{12}=\frac{(e^{-1/2}-e^{-5/2})/2+(1-e^{-4})/2}{9}.
\]
Here $0<C_{12}<1/9$. For $0<|a|\le1/2$ the displayed covariance is nonzero
(the linear coefficient $2/(3e)$ exceeds $|a|C_{12}$), proving dependence.
\end{proof}

\subsubsection{Proof of Theorem~\ref{thm:block-upper}}

\begin{proof}
Every sieve map with $m$ blocks agrees with the identity beyond coordinate $2m$, and the
Cameron--Martin basis is orthonormal, so exactly as in \eqref{eq:exact-split}
\[
  \big\|\widehat T_m-T_a\big\|_{L^2(\gamma;\mathcal H)}^2
  =\varsigma^2\sum_{j\le m}\big(\widehat a_j-a_j\big)^2
  +\varsigma^2\sum_{j>m}a_j^2 ,
\]
using \eqref{eq:block-coef} and the linearity of $a\mapsto T_{a,m}$, with
$\|T_{a,m}-T_{a',m}\|_{L^2(\gamma;\mathcal H)}^2=\varsigma^2\|a-a'\|^2$. The bias term is
at most $\varsigma^2m^{-2s}\sum_{j>m}j^{2s}a_j^2\le\varsigma^2B^2m^{-2s}$.

For the variance term fix $j\le m$ and write $S_j:=\sum_{i\le N}\|\Psi_{ij}\|^2$. By
\eqref{eq:block-regression} the unclipped least-squares coefficient satisfies
$\widetilde a_j-a_j=S_j^{-1}\sum_i\langle\Psi_{ij},\xi_{i,B_j}\rangle$, which conditionally
on the design is centered Gaussian with variance $\sigma^2/S_j$. The summands
$\|\Psi_{ij}\|^2$ are
i.i.d., take values in $[0,1]$, and have mean $\varsigma^2$, so Hoeffding's inequality
gives $\mathbb P(S_j<\tfrac12N\varsigma^2)\le e^{-cN}$ with $c:=\tfrac12\varsigma^4$. On the
complementary event the conditional variance is at most $2\sigma^2/(N\varsigma^2)$.
Projection onto $[-a_0,a_0]$ is nonexpansive and $a_j$ lies in that interval, so
$|\widehat a_j-a_j|\le|\widetilde a_j-a_j|$, while $|\widehat a_j-a_j|\le2a_0$ always.
Splitting on the two events,
\[
  \mathbb E\big[(\widehat a_j-a_j)^2\big]
  \ \le\ \frac{2\sigma^2}{N\varsigma^2}+4a_0^2e^{-cN}.
\]
Multiplying by $\varsigma^2$ and summing over $j\le m$ gives the first display. Balancing
$2\sigma^2m/N$ against $\varsigma^2B^2m^{-2s}$ at $m\asymp(B^2N/\sigma^2)^{1/(2s+1)}$ makes
both of order $B^{2/(2s+1)}(\sigma^2/N)^{2s/(2s+1)}$, while
$me^{-cN}\to0$ faster than any power of $N$; Jensen's inequality applied to the square
root gives the second display.
\end{proof}

\subsubsection{Proof of Theorem~\ref{thm:block-lower}}

\begin{proof}
The argument is that of Theorem~\ref{thm:minimax}, with the diagonal perturbation replaced
by a block perturbation, so we only record the three quantities the Fano method needs.

Fix a resolution $m\ge1$, perturb the band of blocks $j\in\{m+1,\dots,2m\}$, and for
$\omega\in\{0,1\}^m$ set $a^\omega_j:=\tau\omega_j$ on the band and $0$ elsewhere, with
$\tau\in(0,a_0]$, writing $T_\omega:=T_{a^\omega}$. Let
$\Omega\subseteq\{0,1\}^m$ be the Varshamov--Gilbert subset supplied by the proof of
Lemma~\ref{lem:hypothesis}(ii) with the band size $d$ there replaced by $m$, so that
$\log|\Omega|\ge cm$ and $d_H\ge m/8$ on $\Omega$.

By \eqref{eq:block-coef},
$\sum_jj^{2s}(a^\omega_j)^2\le\tau^2(2m)^{2s}m$, so $a^\omega\in\mathcal A_s(B,a_0)$ as
soon as $2^s\tau m^{s+1/2}\le B$ and $\tau\le a_0$.

Since the blocks are orthogonal in $\mathcal H$ and
$\|T_{a}-T_{a'}\|^2=\varsigma^2\|a-a'\|^2$,
$\|T_\omega-T_{\omega'}\|_{L^2(\gamma;\mathcal H)}^2=\varsigma^2\tau^2d_H(\omega,\omega')
\ge\varsigma^2\tau^2m/8$ for distinct $\omega,\omega'\in\Omega$.

Conditionally on the design, the observation on block $j$ is
$N(a_j\Psi_{ij},\sigma^2I_2)$ by \eqref{eq:block-regression}, and the laws agree off the
band. The Gaussian shift formula and integration over the design give
\[
  \mathrm{KL}\big(P_{T_\omega}\|P_{T_{\omega'}}\big)
  =\frac{\tau^2}{2\sigma^2}\sum_{j\ \rm band}(\omega_j-\omega'_j)^2\,
  \mathbb E\|\nabla\psi(Z_{B_j})\|^2
  =\frac{\tau^2\varsigma^2}{2\sigma^2}\,d_H(\omega,\omega')
  \ \le\ \frac{\tau^2\varsigma^2m}{2\sigma^2},
\]
and tensorization over the $N$ pairs multiplies this by $N$.

These are the three bounds of Lemma~\ref{lem:hypothesis}, with $\tau^2$ replaced
by $\varsigma^2\tau^2$ in the separation and in the divergence, and with $d$ replaced by
$m$. Choosing $\tau^2\asymp\sigma^2/(\varsigma^2N)$ and
$m\asymp(B^2N\varsigma^2/\sigma^2)^{1/(2s+1)}$ exactly as in the proof of
Theorem~\ref{thm:minimax}, and noting that $\tau\to0$ so that $\tau\le a_0$ for all large
$N$, the same Fano and Markov steps give the stated bound, with $c'$ absorbing
$\varsigma$.
\end{proof}

\subsection{Continuous two-groups model}
\label{sup:two-groups-proofs}

\subsubsection{Proof of Theorem~\ref{thm:logodds-fast}} \label{app:proof-logodds-fast}

\begin{proof}
Throughout, $\delta\in\mathcal D_\rho(c_-,c_+)$ is fixed, $K=L_0r_0$ is the envelope
of \eqref{eq:g-envelope}, and $C,C',\dots$ denote constants depending only on $\sigma$,
$r_0$ and $L_0$. All statements are uniform in $\delta$ over the class, since $K$,
$r_0$ and the mesh are.

Every sieve map acts as the
identity on the coordinates $k>d_N$, and the Cameron--Martin basis is orthonormal, so for
any $\eta$
\begin{equation}
\label{eq:exact-split}
  \big\|T_{\ell(\eta),N}-T_\delta\big\|_{L^2(\gamma;\mathcal H)}^2
  =\sum_{k\le d_N}\big\|M_{\ell(\eta_k)}-M_{\delta_k}\big\|_{L^2(\Phi)}^2
  +\sum_{k>d_N}\|u_k\|_{L^2(\gamma)}^2 ,
\end{equation}
 The second sum is at most
$c_+W_0^2\sum_{k>d_N}k^{-2\rho}\le c_+W_0^2d_N^{-(2\rho-1)}/(2\rho-1)$ by
\eqref{eq:two-groups-bias}, which is the bias term of
\eqref{eq:logodds-fast-risk}. It remains to bound
$V(\widehat\eta_N):=\sum_{k\le d_N}v_k(\widehat\eta_k)$, where
$v_k(\eta):=\|M_{\ell(\eta)}-M_{\delta_k}\|_{L^2(\Phi)}^2$.

Fix $k\le d_N$ and
$\eta\in[B_--2\rho\log k,B_+-2\rho\log k]$, and write $g=g_\eta$ as in
\eqref{eq:excess-loss}, so $\|g\|_\infty\le K$ by \eqref{eq:g-envelope} and
$v:=v_k(\eta)=\|g\|_{L^2(\Phi)}^2$. Because the contrast \eqref{eq:def-Rhat-circ} is
separable across coordinates, its $k$th block equals, up to an additive term free of
$\eta$, the empirical average of $L_\eta(Z_{ik},\xi_{ik})=g(Z_{ik})^2-2\xi_{ik}g(Z_{ik})$
over $i\le N$, with $Z_{ik}\sim N(0,1)$ and $\xi_{ik}\sim N(0,\sigma^2)$ independent.
Since $\mathbb E[\xi\mid Z]=0$, $\mathbb E L_\eta=v$.

We compute the moment generating function exactly. Conditioning on $Z$ and using
$\mathbb E[e^{-2\varkappa\xi g}\mid Z]=e^{2\varkappa^2\sigma^2g^2}$,
\[
  \mathbb E\big[e^{\varkappa L_\eta}\big]
  =\mathbb E_Z\big[e^{u g(Z)^2}\big],
  \qquad u:=\varkappa+2\varkappa^2\sigma^2 .
\]
Set $\varkappa_0:=\min\{(4\sigma^2)^{-1},(2K^2)^{-1}\}$ and let $|\varkappa|\le\varkappa_0$, so
that $|u|\le\tfrac32|\varkappa|$ and $|u|K^2\le\tfrac34$. For $x\in[0,K^2]$ and
$|ux|\le\tfrac34$ one has $|e^{ux}-1-ux|\le u^2x^2$, whence, using
$\mathbb E[g^4]\le K^2v$,
\[
  \mathbb E_Z\big[e^{ug^2}\big]\ \le\ 1+uv+u^2K^2v\ \le\ \exp\big(uv+u^2K^2v\big),
\]
and therefore
\[
  \log\mathbb E\big[e^{\varkappa(L_\eta-v)}\big]
  \ \le\ \big(u-\varkappa\big)v+u^2K^2v
  \ \le\ \varkappa^2v\big(2\sigma^2+\tfrac94K^2\big)=:\bar C\varkappa^2v ,
  \qquad|\varkappa|\le\varkappa_0 .
\]
This is the Bernstein condition \eqref{eq:var-mean} in exponential form. The variance
proxy is proportional to the mean $v$, with a constant free of $d_N$ and of $k$.

Write
$\mathbb P_NL_\eta:=\tfrac1N\sum_{i\le N}L_\eta(Z_{ik},\xi_{ik})$. By the Chernoff bound
with the exponential-moment estimate above, optimizing over $\varkappa\in(0,\varkappa_0]$, for every $t>0$
\[
  \mathbb P\Big(\big|\mathbb P_NL_\eta-v\big|
  \ \ge\ 2\sqrt{\tfrac{\bar Cvt}{N}}+\tfrac{2t}{\varkappa_0N}\Big)\ \le\ 2e^{-t}.
\]
By $2\sqrt{\bar Cvt/N}\le\tfrac v2+2\bar Ct/N$, the event
\begin{equation}
\label{eq:bernstein-pointwise}
  \tfrac12 v-\tfrac{C_\ast t}{N}\ \le\ \mathbb P_NL_\eta\ \le\ \tfrac32v+\tfrac{C_\ast t}{N},
  \qquad C_\ast:=2\bar C+2/\varkappa_0,
\end{equation}
has probability at least $1-2e^{-t}$.

For each fixed $k$, choose
$\widetilde\eta_k\in\mathcal G_k$ with $|\widetilde\eta_k-\eta_k^*|\le h$.
Then $v_k(\widetilde\eta_k)\le L_0^2h^2$.
A union bound over $\mathcal G_k$, with $t=\log(2|\mathcal G_k|)+u$, shows that
\eqref{eq:bernstein-pointwise} holds at every grid point with probability at least
$1-e^{-u}$. On this event the coordinatewise grid minimizer satisfies
\[
\tfrac12v_k(\widehat\eta_k)-C_*t/N
\le\mathbb P_NL_{\widehat\eta_k}
\le\mathbb P_NL_{\widetilde\eta_k}
\le\tfrac32L_0^2h^2+C_*t/N.
\]
Consequently
\[
\mathbb P\left(v_k(\widehat\eta_k)>3L_0^2h^2+
\frac{4C_*[\log(2(r_0/h+2))+u]}N\right)\le e^{-u}.
\]
Integrating this tail bound and summing expectations over $k\le d_N$ gives
\begin{equation}
\label{eq:V-highprob}
\mathbb E V(\widehat\eta_N)
\le3L_0^2d_Nh^2+
\frac{4C_*d_N[1+\log(2(r_0/h+2))]}N.
\end{equation}
No union over coordinates is needed for this expectation bound.
Combining with the split \eqref{eq:exact-split} proves \eqref{eq:logodds-fast-risk}.

Define the nonnegative coordinatewise excess
$\Delta_k=\mathbb P_NL_{\widetilde\eta_k^{\rm fit}}-
\min_{\eta\in\mathcal G_k}\mathbb P_NL_\eta$, where
$\widetilde\eta^{\rm fit}\in\mathcal G_N$ is an approximate minimizer.
The assumed total objective gap gives $\sum_k\Delta_k\le\varepsilon_N$.
The preceding comparison adds $2\Delta_k$ to its right side. Applying the same
coordinatewise tail integration to
$(v_k(\widetilde\eta_k^{\rm fit})-2\Delta_k)_+$ and summing proves the stated
additional $2\varepsilon_N$ bound. The approximate coordinate fits need not be independent.

The specified mesh range implies
$h^2\le C_0^2\log N/N$ and $\log(2+r_0/h)\le C_A\log N$.
Balancing $d_N\log N/N$ with $d_N^{-(2\rho-1)}$ gives
$d_N\asymp(N/\log N)^{1/(2\rho)}$. Jensen's inequality gives
\eqref{eq:logodds-fast-rate}. All bounds are uniform over the class because the box
contains every $\eta^*$ and its width depends only on $c_-,c_+$.
\end{proof}

\section{Additional Gaussian calculations}
\label{sup:gaussian-details}

We consider centered Gaussian measures with commuting covariances, including the conjugate Gaussian inverse problem. The transport map and truncation bias are available in closed form, so the regularity exponent and approximation rate follow directly from the covariance spectrum. Let
\[
\pi_0 = N(0,C_0),
\qquad
\pi_1 = N(0,C_1),
\]
where $C_0$ and $C_1$ are self-adjoint, strictly positive, trace-class covariance
operators on $\mathcal{F}$ that commute, so that some orthonormal basis
$\{\varphi_k\}_{k\ge1}$ of $\mathcal{F}$ satisfies
\[
C_0 \varphi_k = \lambda_k^{(0)} \varphi_k,
\qquad
C_1 \varphi_k = \lambda_k^{(1)} \varphi_k,
\qquad
\lambda_k^{(0)},\lambda_k^{(1)} > 0.
\]

We assume the commuting pair satisfies \textnormal{(A1)}--\textnormal{(A3)}. With the Cameron--Martin reference
taken to be the source, $\gamma=\pi_0$, as fixed below, the transport map
\eqref{eq:gauss-OT-diagonal} has displacement
$T-I=\sum_k(a_k-1)\hat e_k\,e_k$ with
$a_k=(\lambda_k^{(1)}/\lambda_k^{(0)})^{1/2}$, and the coordinates $\hat e_k$ are
standard normal under $\pi_0$, so
$\|T-I\|_{L^2(\pi_0;\mathcal H)}^2=\sum_k(1-a_k)^2$. The finite Cameron--Martin
transport cost \textnormal{(A2)} therefore requires in particular
\[
\sum_{k\ge1}(1-a_k)^2<\infty ,
\]
a restriction on the eigenvalue ratios. The polynomial regime of Remark~\ref{rem:gaussian-rates}, where
$1-a_k\asymp k^{-\rho}$, satisfies it for $\rho>\tfrac12$.

Consider the conjugate Gaussian inverse problem. Let $\{\varphi_k\}$ diagonalize both the prior
covariance $C_\gamma=C_0$ and the forward heat semigroup, and let
$u\sim\gamma=N(0,C_0)$ be the unknown initial condition, with independent coefficients
$u_k\sim N(0,\lambda_k^{(0)})$. We observe
$Y_k=\kappa_ku_k+\varepsilon\xi_k$, where $\xi_k$ are i.i.d.\ $N(0,1)$,
$\kappa_k\ge0$ are known forward-operator eigenvalues, and $\varepsilon>0$ is the noise level.
Gaussian conjugacy gives
$u_k\mid Y\sim N(m_k,v_k)$ with
$v_k=\big(1/\lambda_k^{(0)}+\kappa_k^2/\varepsilon^2\big)^{-1}$ and
$m_k=v_k\kappa_kY_k/\varepsilon^2$. The centered prior-to-posterior transport is diagonal with coefficients
$a_k=\sqrt{v_k/\lambda_k^{(0)}}=\big(1+\lambda_k^{(0)}\kappa_k^2/\varepsilon^2\big)^{-1/2}$.
The posterior mean is an explicit affine shift. We study the centered transport, whose
covariance operators $C_0$ and $C_1=C_Y$ are available in closed form. This gives a
direct calculation of the transport coefficients and the corresponding convergence rate
\citep{stuart2010inverse,knapikvdvaartvzanten2011,knapikvdvaartvzanten2013}.

We take the Cameron--Martin reference to be the source, $\gamma=\pi_0$, so that
$\mathcal H=C_0^{1/2}(\mathcal F)$, $\langle h,k\rangle_{\mathcal H}=\langle C_0^{-1/2}h,C_0^{-1/2}k\rangle_{\mathcal F}$, and,
as in Section~\ref{subsec:prelim}, the induced Cameron--Martin orthonormal basis of
$\mathcal H$ is $e_k:=C_0^{1/2}\varphi_k=\sqrt{\lambda_k^{(0)}}\,\varphi_k$. Hence $|e_k|_{\mathcal H}=1$, while $|\varphi_k|_{\mathcal H}^2=(\lambda_k^{(0)})^{-1}$. Under the commuting
assumption, the quadratic-cost optimal transport map $T$ from $\pi_0$ to $\pi_1$ is
linear and diagonal in $\{\varphi_k\}$,
\begin{equation}
\label{eq:gauss-OT-diagonal}
T(x) = \sum_{k\ge 1} a_k \langle x,\varphi_k\rangle_{\mathcal{F}}\, \varphi_k,
\qquad
a_k=\sqrt{\frac{\lambda_k^{(1)}}{\lambda_k^{(0)}}}.
\end{equation}

For each $d\in\mathbb{N}$, the canonical cylindrical truncation of $T$ is
\begin{equation}
\label{eq:Td-definition}
T_d(x)
:=
\sum_{k=1}^d a_k \langle x,\varphi_k\rangle_{\mathcal{F}}\, \varphi_k
+
\sum_{k>d} \langle x,\varphi_k\rangle_{\mathcal{F}}\, \varphi_k,
\end{equation}
so that
\begin{equation}
\label{eq:Td-minus-T}
(T_d-T)(x)
=
\sum_{k>d} (1-a_k)\langle x,\varphi_k\rangle_{\mathcal{F}}\, \varphi_k,
\end{equation}
and
\begin{equation}
\label{eq:explicit-bias}
\|T_d-T\|_{L^2(\pi_0;\mathcal H)}^2=\sum_{k>d}(1-a_k)^2
=\sum_{k>d}\Bigg(1-\sqrt{\tfrac{\lambda_k^{(1)}}{\lambda_k^{(0)}}}\Bigg)^{2}.
\end{equation}

Now fix $N$ and set $d:=d_N$. For the cylindrical-sieve family of Definition~\ref{def:cylindrical}, we use the Cameron--Martin basis $\{e_k\}$ and the
associated first-chaos coordinates, not the raw $\mathcal F$-inner products against
$\{\varphi_k\}$: the feature map is
\[
\Phi_N(f)
:=
\big(\hat e_1(f),\ldots,\hat e_{d_N}(f)\big)
=
\Big(\frac{\langle f,\varphi_1\rangle_{\mathcal{F}}}{\sqrt{\lambda_1^{(0)}}},\ldots,
\frac{\langle f,\varphi_{d_N}\rangle_{\mathcal{F}}}{\sqrt{\lambda_{d_N}^{(0)}}}\Big)
\in\mathbb{R}^{d_N},
\]
which has i.i.d.\ $N(0,1)$ entries under $\pi_0$, as required by
Definition~\ref{def:cylindrical}. Let
$z=\Phi_N(f)$ and consider the quadratic parametric family
\[
g_B(z):=\frac12 z^\top B z,
\qquad
B=B^\top\in\mathbb{R}^{d_N\times d_N}.
\]
Define the cylindrical potential
\[
\phi_{B,N}(f):=g_B(\Phi_N(f)).
\]
The associated sieve map is, by the chain rule \eqref{eq:chain} with $h_k=e_k$,
\begin{equation}
\label{eq:sieve-map-form}
T_{B,N}(f)
:=
f+\sum_{k=1}^{d_N}\partial_k g_B(\Phi_N(f))\,e_k.
\end{equation}
Since $\nabla g_B(z)=Bz$, we have $\partial_k g_B(z)=(Bz)_k$, so, writing $e_k$ in terms
of $\varphi_k$,
\begin{equation}
\label{eq:sieve-map-linear}
T_{B,N}(f)
=
f+\sum_{k=1}^{d_N}(B\Phi_N(f))_k\,e_k
=
f+\sum_{k=1}^{d_N}(B\Phi_N(f))_k\,\sqrt{\lambda_k^{(0)}}\,\varphi_k.
\end{equation}

Let $A$ be the diagonal operator in \eqref{eq:gauss-OT-diagonal}, and define
\[
B^\ast:=\mathrm{diag}(a_1-1,\ldots,a_{d_N}-1)\in\mathbb{R}^{d_N\times d_N}.
\]
Then
\[
T_{B^\ast,N}(f)=T_{d_N}(f).
\]
Hence, in this benchmark, the sieve class generated by quadratic potentials contains the canonical truncation map exactly. In particular,
\begin{equation}
\label{eq:explicit-bias-final}
\inf_{\theta\in\Theta_N}\|T_{\theta,N}-T\|_{L^2(\pi_0;\mathcal H)}
\le
\|T_{B^\ast,N}-T\|_{L^2(\pi_0;\mathcal H)}
=
\|T_{d_N}-T\|_{L^2(\pi_0;\mathcal H)},
\end{equation}
and therefore the sieve approximation bias is explicitly controlled by the tail eigenstructure through \eqref{eq:explicit-bias}. The derivations of \eqref{eq:gauss-OT-diagonal}, \eqref{eq:explicit-bias}, and \eqref{eq:explicit-bias-final} are collected in Section~\ref{app:proof-gaussian-bias}.

Section~\ref{subsec:spectral} gives $a_L,a_\mu,s$ explicitly for this benchmark under a spectral-decay model.

\begin{rem}
\label{rem:gaussian-rates}
Suppose $\lambda_k^{(0)}\asymp k^{-\alpha}$ for some $\alpha>1$. This restriction is needed for $C_0$ to be trace class. In the conjugate parametrization of the previous display,
\[
\frac{\lambda_k^{(0)}\kappa_k^2}{\varepsilon^2}=ck^{-\rho}+o(k^{-\rho}),
\qquad \rho>\tfrac12,\ c>0,
\]
i.e.\ the forward operator's information about coordinate $k$, relative to the prior
variance and noise level, decays polynomially, the mildly ill-posed regime of
Bayesian linear inverse problems \citep{knapikvdvaartvzanten2011,knapikvdvaartvzanten2013}.
Here $a_k=(1+\lambda_k^{(0)}\kappa_k^2/\varepsilon^2)^{-1/2}$, so a first-order expansion
gives $a_k=1-\tfrac12ck^{-\rho}+o(k^{-\rho})$, and $a_k<1$ whenever $\kappa_k\ne0$. By the chain rule,
$T-I=\nabla_{\mathcal H}\phi$ for the centered potential
\[
\phi(f)=\tfrac12\sum_k(a_k-1)\big(\hat e_k(f)^2-1\big),
\]
where the $-1$ leaves the gradient unchanged and makes the series converge in
$L^2(\gamma)$. The uncentered series does not converge there for
$\rho\in(\tfrac12,1]$. The
coefficient fields of Section~\ref{sec:approx-section} are $u_k(f)=(a_k-1)\hat e_k(f)$
with $\|u_k\|_{L^2(\pi_0)}^2\asymp k^{-2\rho}$, so $\rho-\tfrac12$ is $T$'s boundary
regularity exponent in the sense of Remark~\ref{rem:supremal}, and the direct tail sum
$\|T_d-T\|^2_{L^2(\pi_0;\mathcal H)}=\sum_{k>d}(a_k-1)^2\asymp d^{-2(\rho-1/2)}$, already
recorded in \eqref{eq:explicit-bias}, matches \eqref{eq:truncation} at that exponent.

This polynomial decay is an assumption, not a consequence. At a fixed observation time
the heat semigroup gives $\kappa_k=e^{-t\nu_k}$, so $1-a_k$ decays faster than any power
of $k$, the severely ill-posed case, for which $T$ lies in $\mathcal W^s$ for every
finite $s$ and the regularity exponent is not the operative measure of difficulty
\citep{knapikvdvaartvzanten2011,knapikvdvaartvzanten2013}. The mildly ill-posed case
worked out here is the regime in which the scale $\mathcal W^s$ is informative.
\end{rem}

\begin{rem}
\label{rem:supremal}
Where the coefficient fields obey $\|u_k\|_{L^2(\pi_0)}^2\asymp k^{-2\rho}$ on both sides,
$T\in\mathcal W^s$ for every $s<\rho-\tfrac12$, with $\|T\|_{\mathcal W^s}\uparrow\infty$
as $s\uparrow\rho-\tfrac12$ and no membership at the boundary itself, while the direct
truncation sum $\sum_{k>d}\|u_k\|_{L^2(\pi_0)}^2\asymp d^{-2(\rho-1/2)}$ attains the
boundary exponent. Statements written with $s=\rho-\tfrac12$ therefore refer to the
truncation calculation, which is carried out at the boundary exponent directly, and not to
a limit of statements at $s<\rho-\tfrac12$. In Section~\ref{subsec:bayesian-example} only
the one-sided bound $\|u_k\|_{L^2(\pi_0)}^2\lesssim k^{-2\rho}$ is available, so
membership at the boundary is not excluded there and the rates quoted are upper bounds.
\end{rem}

\begin{proposition}
\label{prop:gaussian-verify}
Take $\Theta_N:=[-2,2]^{d_N}$ with $B=\mathrm{diag}(\theta_1,\ldots,\theta_{d_N})$, so
that $\theta_N^*=(a_1-1,\ldots,a_{d_N}-1)\in(-1,0]^{d_N}$ lies in the interior of
$\Theta_N$, and $p_N=d_N$, $q=1$ in the notation of
Section~\ref{subsec:spectral}, and assume $a_k\le1$ for every $k$, as holds in the
conjugate model of Remark~\ref{rem:gaussian-rates}. Then
Assumption~\ref{ass:sieve-primitive}(B2) holds with $\mu_N=1$,
Assumption~\ref{ass:lipschitz-map} holds with $L_N=1$, and
Assumption~\ref{ass:localization} holds; hence $a_L=a_\mu=0$ and $\kappa_N=\kappa_NL_N=1$.
For every $\delta\in(0,1)$, with probability at least $1-\delta-d_Ne^{-cN}$
for an absolute constant $c>0$,
\[
\|\widehat\theta_N-\theta_N^*\|\ \le\ C_5\,\sigma\sqrt{\tfrac{d_N+\log(1/\delta)}N}.
\]
Both statements hold verbatim on any compact box $\Theta_N=[\ell,u]^{d_N}$ containing
$\theta_N^*$ in its interior, since only convexity of the box and interiority of the
oracle are used. On the widened box
\begin{equation}
\label{eq:widened-box}
\Theta_N^{(B)}:=\big[\underline a-2,\ B+1\big]^{d_N},
\end{equation}
which contains the coefficient range of the whole of $\mathcal W^s_{\mathrm{diag}}(B)$ in
its interior, one obtains in addition, for $N>2$,
\begin{equation}
\label{eq:gaussian-expected-risk}
\sup_{T\in\mathcal W^s_{\mathrm{diag}}(B)}
\mathbb E_T\big\|\widehat T_N-T\big\|_{L^2(\pi_0;\mathcal H)}^2
\ \le\ \frac{\sigma^2d_N}{N-2}+B^2d_N^{-2s} ,
\end{equation}
where $\widehat T_N$ is the empirical risk minimizer over $\Theta_N^{(B)}$. 
\end{proposition}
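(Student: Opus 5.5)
The proof is a direct computation, because the quadratic diagonal sieve makes every object explicit. I will first establish the two structural constants. With $B=\mathrm{diag}(\theta)$ we have $\partial_kg_\theta(z)=\theta_kz_k$, so the Jacobian $J_\theta(z)=\mathrm{diag}(z_1,\ldots,z_{d_N})$ does not depend on $\theta$. Since $\hat e_k(f_0)\sim N(0,1)$ are independent under $\pi_0=\gamma$, the Gram matrix is $G_N=I_{d_N}$. The curvature term in (B2) vanishes because $g_\theta$ is affine in $\theta$. Hence $H_{R_N}\equiv 2I$, which gives quadratic growth with $\mu_N=1$ on all of $\Theta_N$. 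For the Lipschitz constant, orthonormality of $\{e_k\}$ in $\mathcal H$ gives
\[
\|T_{\theta,N}-T_{\theta',N}\|_{L^2(\pi_0;\mathcal H)}^2=\sum_{k\le d_N}(\theta_k-\theta_k')^2\,\mathbb E\hat e_k^2=\|\theta-\theta'\|^2,
\]
so $L_N=1$. I will not route this through (B3), because the pointwise envelope is unbounded for this sieve.

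Next I will write the ERM in closed form. The contrast \eqref{eq:def-Rhat-circ} is separable and quadratic in $\theta$. Coordinate $k$ is the least-squares problem of regressing $D_{ik}=(a_k-1)Z_{ik}+\xi_{ik}$ on $Z_{ik}$, using \eqref{eq:D-decomp} and $u_k=(a_k-1)\hat e_k$. The unconstrained solution is therefore $\widetilde\theta_k=\theta^*_{N,k}+U_k/V_k$, where $U_k=N^{-1}\sum_iZ_{ik}\xi_{ik}$ and $V_k=N^{-1}\sum_iZ_{ik}^2$. The box constraint is handled by projection, which is nonexpansive because $\theta^*_N$ lies in the box, so $|\widehat\theta_k-\theta^*_{N,k}|\le|U_k/V_k|$. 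A chi-square lower-tail bound and a union bound give $\min_kV_k\ge\tfrac12$ with probability at least $1-d_Ne^{-cN}$. On that event, conditionally on the design, $(U_k/V_k)_k$ is a Gaussian vector with independent coordinates of variance at most $2\sigma^2/N$. Gaussian concentration for the $1$-Lipschitz norm then gives the stated bound $C_5\sigma\sqrt{(d_N+\log(1/\delta))/N}$.

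For localization I take $r_N=\upsilon_N\sigma\sqrt{d_N/N}$ with $\upsilon_N\to\infty$ slowly, and apply the tail bound with $\delta_N\to0$. This step needs $d_N/N\to0$ so that the ball stays inside the box. I expect it to be the only delicate point, and it holds for the sieve dimensions used.

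For the expected risk, conditionally on the design $\widetilde\theta_k-\theta^*_{N,k}\sim N(0,\sigma^2/\chi^2_N)$, and $\mathbb E[1/\chi^2_N]=1/(N-2)$ for $N>2$. Projection onto the widened box \eqref{eq:widened-box}, which contains every $a_k-1\in[\underline a-1,B]$, preserves the inequality. This gives $\mathbb E\|\widehat\theta-\theta^*\|^2\le\sigma^2d_N/(N-2)$ uniformly over the class. The risk then splits orthogonally into retained coordinates, which contribute $\|\widehat\theta-\theta^*\|^2$ because $T_{\theta^*_N,N}=T^{(d_N)}$, and the tail $\sum_{k>d_N}(a_k-1)^2\le B^2d_N^{-2s}$. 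This yields \eqref{eq:gaussian-expected-risk}.
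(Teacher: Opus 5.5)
Your proposal is correct and follows the paper's proof essentially step for step. The shared steps are: the $\theta$-free Jacobian giving $G_N=I$ and $\mu_N=1$; the direct $L^2$ isometry for $L_N=1$ instead of (B3); the closed-form coordinatewise least-squares slope with nonexpansive projection; the chi-square lower tail combined with conditional Gaussian concentration; $\mathbb E[1/\chi^2_N]=1/(N-2)$ together with the orthogonal retained/tail split; and localization with $r_N=\upsilon_N\sigma\sqrt{d_N/N}$. The paper takes $\delta_N=1/d_N$ there, which gives $\log(1/\delta_N)\le d_N$ and, with $d_N/N\to0$, also $d_Ne^{-cN}\to0$. One harmless slip: in $\mathcal W^s_{\mathrm{diag}}(B)$ the displacement coefficients (your $a_k-1$) lie in $[\underline a,B]$, not $[\underline a-1,B]$, but the widened box contains either range, so nothing changes.
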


Section~\ref{app:proof-gaussian-verify} solves the separable quadratic problem in closed form. A direct $L^2$ calculation gives $L_N=1$, and the risk Hessian is $2I_{d_N}$ at every $\theta$, so $\mu_N=1$ is a valid strong-convexity constant. The parameter-error bound holds for the global minimizer over $\Theta_N$.

Combining the stochastic term $\sigma\sqrt{d_N/N}$ with the bias $d_N^{-s}$ gives
\[
d_N^\star\asymp (N/\sigma^2)^{1/(2s+1)},
\qquad
\|\widehat T_N-T\|_{L^2(\pi_0;\mathcal H)}
=O_p\big(\sigma^{2s/(2s+1)}N^{-s/(2s+1)}\big).
\]
For the expected norm, Jensen's inequality and
$d_N\asymp(N/\sigma^2)^{1/(2s+1)}$ give
\[
\sup_{T\in\mathcal W^s_{\mathrm{diag}}(B)}
\mathbb E_T\|\widehat T_N-T\|_{L^2(\pi_0;\mathcal H)}
\le\Big(\frac{\sigma^2d_N}{N-2}+B^2d_N^{-2s}\Big)^{1/2}
\lesssim\big(\sigma^2/N\big)^{s/(2s+1)}.
\]
This has the same order as the lower bound in Theorem~\ref{thm:minimax} on the aligned
diagonal subclass. The mixture class is treated by the direct risk bound in
Theorem~\ref{thm:logodds-fast}.

For every $s<\rho-\tfrac12$, Remark~\ref{rem:gaussian-rates} gives
$\|T\|_{\mathcal W^s}<\infty$. The coefficients $a_k-1$ lie in $(-1,0]$, are bounded
away from $-1$, and converge to zero under the spectral model. Hence
$T\in\mathcal W_{\mathrm{diag}}^s(B_s)$ with
$B_s=\|T\|_{\mathcal W^s}$. On the widened box
$\Theta_N^{(B_s)}$ in \eqref{eq:widened-box}, the diagonal quadratic sieve represents
every member of this subclass up to truncation, so Assumption~\ref{ass:expressivity}
holds uniformly with $C'=0$. The lower bound in Theorem~\ref{thm:minimax} applies to the
same subclass. Therefore the estimator is minimax rate-optimal over
$\mathcal W_{\mathrm{diag}}^s(B_s)$ for every $s<\rho-\tfrac12$.


\section{Numerical integration and coordinate tails}
\label{sup:numerical-integration}

For $x\ge0$, the implementation solves
\[
(1-\theta)\overline\Phi(t)+\tfrac\theta2e^{-t}=\overline\Phi(x)
\]
in log-survival form, with a bracket between $x$ and
$-\log(2\overline\Phi(x))$. Negative arguments use oddness. The bracket follows
because a mixture quantile lies between the corresponding component quantiles.

Write $b(\theta)=\|M_\theta-\mathrm{id}\|_2^2$.
For fixed $x$, $t_\theta=M_\theta(x)$ is continuous and differentiable on $(0,1)$,
with derivative $(\Phi(t_\theta)-F_\nu(t_\theta))/f_\theta(t_\theta)$.
If this derivative vanishes, the defining quantile equation implies
$\Phi(t_\theta)=\Phi(x)$, hence $t_\theta=x$ and $F_\nu(x)=\Phi(x)$.
In that case $M_\theta(x)=x$ for every $\theta$.
Otherwise the derivative never vanishes, and $|M_\theta(x)-x|$ increases from zero.
Thus $b$ is nondecreasing and $b(ck^{-2\rho})$ is nonincreasing in $k$.
For any partition of $\{d+1,\ldots,K\}$ into consecutive integer blocks $[a_j,b_j]$,
\[
\sum_j(b_j-a_j+1)b(cb_j^{-2\rho})
\le \sum_{k>d}b(ck^{-2\rho})
\le \sum_j(b_j-a_j+1)b(ca_j^{-2\rho})
+\frac{cW_0^2}{2\rho-1}K^{1-2\rho}.
\]
Geometrically increasing blocks allow a large cutoff without evaluating every
coordinate. The implementation uses $W_0^2\le(1+\sqrt2)^2$ for the remainder bound.
Endpoint integrals are evaluated numerically at two quadrature orders. The resulting
sensitivity measure is reported separately from the block interval and the analytic
remainder bound; these quantities are reported separately.

\section{Numerical records}
\label{sup:numerical-records}

\subsection{Detailed grid-experiment results}
\label{sup:grid-results}
\begin{table}[H]
\centering\small
\begin{tabular}{rrrrrrrr}
\hline
$\rho$ & $N$ & $d_N$ & Reps & Estimation risk & SE & Computed bias & Endpoint fraction\\
\hline
$0.75$ & 250 & 13 & 40 & $5.4945\times10^{-3}$ & $3.52\times10^{-4}$ & $3.1925\times10^{-3}$ & $0.383$\\
$0.75$ & 500 & 19 & 40 & $4.1504\times10^{-3}$ & $2.38\times10^{-4}$ & $2.2785\times10^{-3}$ & $0.361$\\
$0.75$ & 1000 & 28 & 30 & $2.9593\times10^{-3}$ & $1.14\times10^{-4}$ & $1.6197\times10^{-3}$ & $0.354$\\
$0.75$ & 2000 & 41 & 25 & $2.1881\times10^{-3}$ & $1.03\times10^{-4}$ & $1.1625\times10^{-3}$ & $0.381$\\
$0.75$ & 4000 & 61 & 20 & $1.6560\times10^{-3}$ & $6.70\times10^{-5}$ & $8.2639\times10^{-4}$ & $0.413$\\
$1.0$ & 500 & 9 & 40 & $1.9433\times10^{-3}$ & $1.42\times10^{-4}$ & $6.0887\times10^{-4}$ & $0.261$\\
$1.0$ & 1000 & 12 & 40 & $1.3967\times10^{-3}$ & $1.09\times10^{-4}$ & $3.9901\times10^{-4}$ & $0.344$\\
$1.0$ & 2000 & 16 & 30 & $8.9311\times10^{-4}$ & $6.06\times10^{-5}$ & $2.6179\times10^{-4}$ & $0.335$\\
$1.0$ & 4000 & 22 & 30 & $5.6253\times10^{-4}$ & $2.77\times10^{-5}$ & $1.6460\times10^{-4}$ & $0.356$\\
$1.0$ & 8000 & 30 & 20 & $3.7723\times10^{-4}$ & $2.09\times10^{-5}$ & $1.0511\times10^{-4}$ & $0.387$\\
$1.0$ & 16000 & 41 & 20 & $2.7516\times10^{-4}$ & $1.26\times10^{-5}$ & $6.7171\times10^{-5}$ & $0.401$\\
$1.5$ & 500 & 4 & 40 & $9.2475\times10^{-4}$ & $1.18\times10^{-4}$ & $1.5290\times10^{-4}$ & $0.188$\\
$1.5$ & 1000 & 5 & 40 & $5.2089\times10^{-4}$ & $4.89\times10^{-5}$ & $8.7569\times10^{-5}$ & $0.255$\\
$1.5$ & 2000 & 6 & 40 & $3.3376\times10^{-4}$ & $2.95\times10^{-5}$ & $5.5341\times10^{-5}$ & $0.237$\\
$1.5$ & 4000 & 8 & 30 & $1.7469\times10^{-4}$ & $1.49\times10^{-5}$ & $2.6794\times10^{-5}$ & $0.258$\\
$1.5$ & 8000 & 10 & 25 & $1.2522\times10^{-4}$ & $1.19\times10^{-5}$ & $1.5326\times10^{-5}$ & $0.308$\\
$1.5$ & 16000 & 12 & 25 & $9.5377\times10^{-5}$ & $1.17\times10^{-5}$ & $9.7759\times10^{-6}$ & $0.310$\\
\hline
\end{tabular}
\caption{Global grid estimator with class bounds $[1/4,1/3]$ and $h_N=\sqrt{\log N/N}$. The last column is the fraction of fitted coordinates, pooled over replicates, at which the minimizer is an endpoint of the coordinate interval.}
\label{tab:grid-details}
\end{table}

\subsection{Score-root implementation}
\label{sup:score-root}

Each fit
was obtained by bisection of a score equation when the endpoint scores had opposite
signs, and by comparison of the endpoints otherwise. This procedure did not certify a
global minimum of the nonconvex criterion. The records contain 155, 180 and 200
replicates at $\rho=0.75,1,1.5$, respectively. No repeated seed within a fixed
$(\rho,N)$ was found. The recorded tail cutoff is 150,000 for $\rho=0.75$ and $1$ and 40,000 for
$\rho=1.5$. Figure~\ref{fig:rate} and
Table~\ref{tab:rate} report these records. They differ from
Section~\ref{sec:numerical-illustration} in the estimator and in the search box, which was
built there from the class bounds $[c_-,c_+]$ and here from the single value $c=1/3$, so
the two sets of risks are not directly comparable. The computed biases differ by at most
$2.5\times10^{-6}$, which is below $0.6\%$ of the total risk in every setting.

\begin{figure}[H]
\centering
\includegraphics[width=\textwidth]{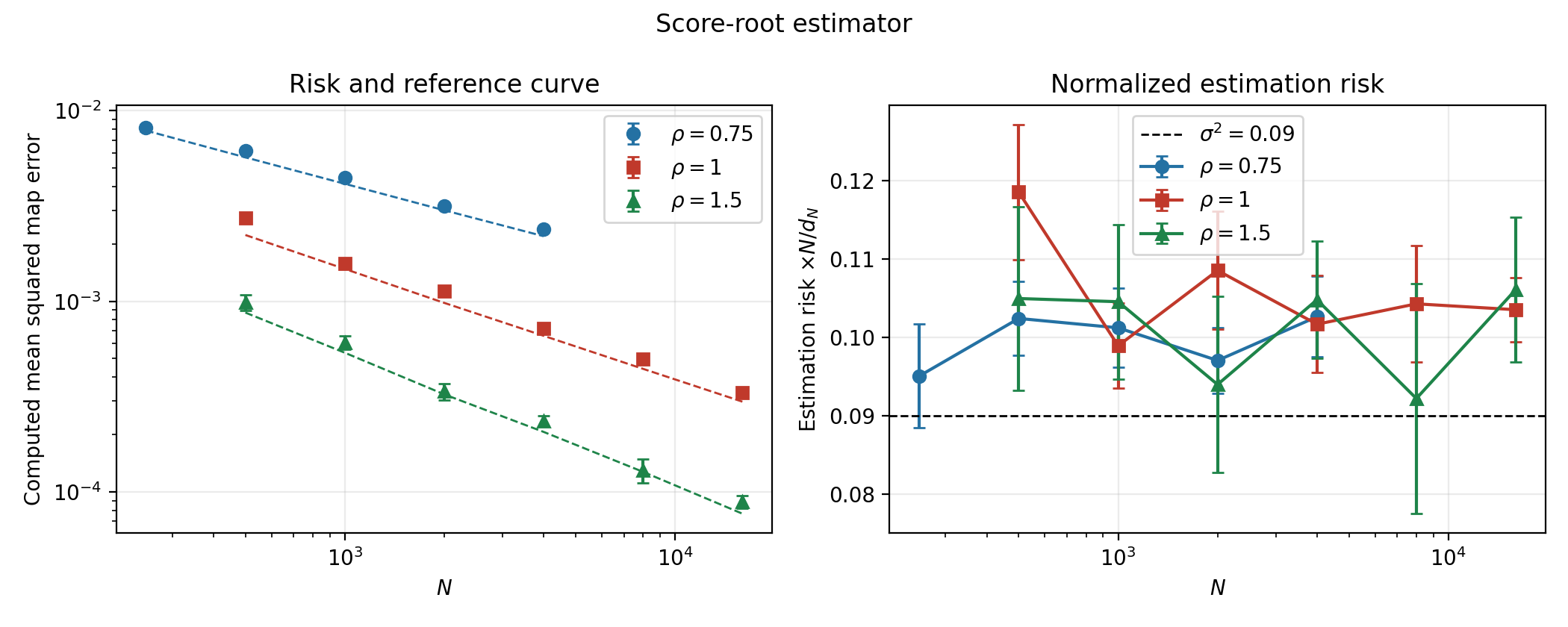}
\caption{Score-root estimator. Left: mean estimation risk plus the stored
finite-sum bias, with the reference $\sigma^2d_N/N$ plus the same bias. Right:
normalized estimation risk, with Monte Carlo standard-error bars. Numerical integration
and omitted-tail errors are not included in the error bars.}
\label{fig:rate}
\end{figure}

\begin{table}[H]
\centering
\small
\begin{tabular}{ccrcccc}
\hline
$\rho$ & $N$ & $d_N$ & replicates & estimation risk & computed bias &
$\text{estimation}\times N/d_N$\\
\hline
$0.75$ &   250 & 13 & 40 & $4.942\times10^{-3}$ & $3.190\times10^{-3}$ & $0.095$\\
       &   500 & 19 & 40 & $3.890\times10^{-3}$ & $2.276\times10^{-3}$ & $0.102$\\
       &  1000 & 28 & 30 & $2.833\times10^{-3}$ & $1.617\times10^{-3}$ & $0.101$\\
       &  2000 & 41 & 25 & $1.989\times10^{-3}$ & $1.160\times10^{-3}$ & $0.097$\\
       &  4000 & 61 & 20 & $1.565\times10^{-3}$ & $8.240\times10^{-4}$ & $0.103$\\
\hline
$1.0$  &   500 &  9 & 40 & $2.133\times10^{-3}$ & $6.084\times10^{-4}$ & $0.118$\\
       &  1000 & 12 & 40 & $1.187\times10^{-3}$ & $3.985\times10^{-4}$ & $0.099$\\
       &  2000 & 16 & 30 & $8.680\times10^{-4}$ & $2.613\times10^{-4}$ & $0.108$\\
       &  4000 & 22 & 30 & $5.590\times10^{-4}$ & $1.641\times10^{-4}$ & $0.102$\\
       &  8000 & 30 & 20 & $3.909\times10^{-4}$ & $1.046\times10^{-4}$ & $0.104$\\
       & 16000 & 41 & 20 & $2.652\times10^{-4}$ & $6.667\times10^{-5}$ & $0.103$\\
\hline
$1.5$  &   500 &  4 & 40 & $8.393\times10^{-4}$ & $1.524\times10^{-4}$ & $0.105$\\
       &  1000 &  5 & 40 & $5.226\times10^{-4}$ & $8.707\times10^{-5}$ & $0.105$\\
       &  2000 &  6 & 40 & $2.818\times10^{-4}$ & $5.484\times10^{-5}$ & $0.094$\\
       &  4000 &  8 & 30 & $2.095\times10^{-4}$ & $2.629\times10^{-5}$ & $0.105$\\
       &  8000 & 10 & 25 & $1.152\times10^{-4}$ & $1.483\times10^{-5}$ & $0.092$\\
       & 16000 & 12 & 25 & $7.955\times10^{-5}$ & $9.277\times10^{-6}$ & $0.106$\\
\hline
\end{tabular}
\caption{Score-root estimator, $\sigma=0.3$. The last column is the normalized estimation risk; $\sigma^2=0.09$ is a reference level.}
\label{tab:rate}
\end{table}

Using identical fixed weights for the observed and reference log-risk curves gives
slopes $(-0.4550,-0.5872,-0.6954)$ and $(-0.4602,-0.5796,-0.6988)$.
Within-$N$ bootstrap resampling gives 95\% intervals for their differences of
$[-0.0251,0.0356]$, $[-0.0365,0.0202]$ and $[-0.0547,0.0633]$.
These intervals describe Monte Carlo uncertainty in finite-range slopes, conditional
on the recorded biases and the original weights. They do not estimate uncertainty in
an asymptotic rate exponent. The curves are compatible with a reference estimation
scale $\sigma^2d_N/N$ over this range.

\end{document}